\documentclass{amsart}
\usepackage[utf8]{inputenc}
\usepackage{amsmath, mathtools, amssymb, amsthm}
\usepackage{tikz-cd}
\usepackage{mathrsfs}
\usepackage{hyperref}
\usepackage{tikz}
\usepackage{array, booktabs}
\newtheorem{theorem}{Theorem}[section]
\newtheorem{proposition}[theorem]{Proposition}
\newtheorem{lemma}[theorem]{Lemma}
\newtheorem{corollary}[theorem]{Corollary}

\theoremstyle{definition}
\newtheorem{definition}[theorem]{Definition}
\newtheorem{remark}[theorem]{Remark}
\newtheorem{example}[theorem]{Example}
\newtheorem{situation}[theorem]{Situation}

\newcommand{\Sec}{\mathrm{Sec}}
\newcommand{\dualproj}{(\mathbb{P}^{N-1})^{\vee}}
\newcommand{\percoh}{{}^{\mathfrak{p}}\mathcal{H}}
\newcommand{\IC}{\mathrm{IC}}

\title[Limiting Hodge Structures for Cubic Hypersurfaces of Secant Type]{Limiting Hodge Structures for Cubic Hypersurfaces of Secant Type}

\author{Renjie Lyu}
\address{Mathematical School of Science, Xiamen University, Xiamen, 361005, China}
\email{r.lyu@xmu.edu.cn}

\subjclass[2020]{14D07, 14D05, 14N07, 32S60, 32S35}
\keywords{limit mixed Hodge structure, cubic hypersurface, secant variety, Severi variety, decomposition of quadric fibrations}

\begin{document}

\begin{abstract}
We study limiting Hodge structures for deformations of cubic hypersurfaces 
degenerating to specific singular cubics of secant type. Collino, Hassett 
and Laza have investigated degenerations of cubic threefolds and fourfolds whose 
central fibers are respectively the secant varieties of the rational normal 
quartic and the Veronese surface. The limiting Hodge structure bridges the 
geometric degeneration of varieties with the Hodge-theoretic degeneration. 
The Veronese surface belongs to the class of Severi varieties, and the 
rational normal quartic is a hyperplane section of the Veronese surface.
We previously investigated similar degeneration problems for the 
secant varieties of Severi varieties in higher dimensions. In the present paper, 
we continue to study the limiting Hodge structures for the hyperplane sections of Severi varieties.
\end{abstract}

\maketitle
\tableofcontents

\section{Introduction}
\label{sec:introduction}

The variation of Hodge structures provides a transcendental approach to 
studying the deformation of algebraic varieties. For a family of smooth 
complex projective varieties, the cohomology of the fibers form a local
system equipped with the Hodge filtration and the Gauss--Manin connection, 
which produces a variation of Hodge structure.  

The behavior of Hodge structures near the boundary is deeply connected
to the degeneration of algebraic varieties. For an abstract variation of 
Hodge structures on the punctured disk \(\Delta^*\), Schmid's nilpotent 
orbit theorem~\cite{Schmid-VHS-73} describes the asymptotic behavior of 
Hodge structures near the origin, and leads to a mixed Hodge structure 
on the canonical fiber of the local system. When the variation of Hodge 
structures comes from a one-parameter degeneration \(f:\mathcal{X}\rightarrow \Delta\), 
the associated limit mixed Hodge structure connects to the topology and 
geometry of the singular central fiber \(f^{-1}(0) = \mathcal{X}_0\). 
When \(\mathcal{X}_0\) is a simple normal crossing 
(SNC) divisor in \(\mathcal{X}\), Steenbrink gave a geometric construction~\cite{Steenbrink-LMHS} 
of the limit mixed Hodge structure, via the relative de Rham complex 
\(\Omega^{\bullet}_{\mathcal{X}/\Delta}(\log \mathcal{X}_0)\) and the nearby cycle 
complex \(\psi_f \underline{\mathbb{Q}}_X\). 

We consider here particular degenerations for cubic hypersurfaces 
of secant type that relate to Severi varieties. A Severi variety 
\(S\subset \mathbb P^{n+2}\) is a smooth, nondegenerate projective variety 
of dimension \(\dim S=\frac{2n}{3}\) whose secant variety \(\Sec(S)\) has
\(\dim \Sec(S) < \min\{2\dim S+1,n+2\}\). Zak's theorem~\cite[Thm. 4.7]{Zak93} 
classifies the four Severi varieties as
\[
v_2(\mathbb P^2)\subset \mathbb{P}^5,\qquad \mathbb P^2\times \mathbb P^2\subset \mathbb{P}^8, \qquad
\mathrm{Gr}(2,6)\subset \mathbb{P}^{14}, \qquad E_6/P_1\subset \mathbb{P}^{26}.
\]
It is known that \(\Sec(S)\) is a cubic hypersurface whose singular 
locus is exactly the Severi variety itself~\cite[\S III,~Thm. 2.9]{Zak93}. 
The Veronese surface \(v_2(\mathbb P^2)\subset \mathbb{P}^5\) is 
especially important for the compactification of the moduli space of cubic 
\(4\)-folds. The secant variety of \(v_2(\mathbb P^2)\) is the chordal 
cubic fourfold. Given a smoothing of this chordal cubic that deforms 
along a smooth cubic fourfold \(X_\infty\), Hassett~\cite[Thm. 4.4.1]{Hassett-cubic-00} 
and Laza~\cite[Thm. 4.1.1]{Laza-cubic-period-10} have shown that the 
corresponding limit mixed Hodge structure \(\mathrm{H}^4_{\lim}\) 
is pure and contains the primitive Hodge structure of the degree-two 
K3 surface \(Y\) obtained as the double cover of \(\mathbb P^2\) 
branched along the sextic \(B = v_2(\mathbb P^2)\cap X_\infty\).
Based on this analysis, the extension of the period map and its 
relation with the Looijenga compactification were established in~\cite{Laza-cubic-period-10,Looijenga-period-cubic-09}.

We showed in the previous paper~\cite{Lyu-Zheng-secant} that, for a generic one-parameter 
family of smooth cubic \((n+1)\)-folds degenerating to the secant variety 
\(\Sec(S)\) of a Severi variety \(S\) with \(\dim S >2\), the limit 
mixed Hodge structure \(\mathrm{H}^{n+1}_{\lim}\) has a length-two 
weight filtration 
\[
    W_{n}\subset W_{n+1}\subset W_{n+2} = \mathrm{H}^{n+1}_{\lim},
\]
and the polarized Hodge structure of weight \(n+1\) on the middle 
graded part \(\mathrm{Gr}^W_{n+1} \mathrm{H}^{n+1}_{\lim}\) is isomorphic 
to 
\[
\mathrm{H}^{\dim B}(B;\mathbb{Q})(k)
\] 
for some integer \(k\), where \(B\) is the smooth divisor in \(S\) cut out
by the smooth cubic \((n+1)\)-fold \(X_{\infty}\) representing the 
deformation direction of the given degeneration. The remaining two graded 
pieces are one-dimensional, so that the Hodge structure is trivial.
Though \(\mathrm{H}^{n+1}_{\lim}\) is not a pure Hodge structure like the 
cubic fourfolds case, the substantial grading \(\mathrm{Gr}^W_{n+1} \mathrm{H}^{n+1}_{\lim}\) 
still records the deformation direction of the degeneration as the key 
geometric information.

Later we were informed that a parallel phenomenon for cubic threefolds had 
been studied by Collino~\cite{Collino-Fano-I}. Consider the rational 
normal quartic \(C =v_4(\mathbb{P}^1)\subset \mathbb{P}^4\). The secant 
variety \(\Sec(C)\) is a cubic threefold whose singular locus is \(C\). 
Fix a generic cubic threefold \(X_{\infty}\) as the deformation direction 
of the chordal cubic \(\Sec(C)\). The intersection \(C\cap X_{\infty}\) 
is twelve distinct points. The ramified cover of \(C\) branched along 
the twelve points is a genus five hyperelliptic curve. Collino proved 
that the limiting Hodge structure associated to the degeneration is 
isomorphic to the weight one Hodge structure of this hyperelliptic curve. 
In terms of abelian varieties, the limiting intermediate Jacobian for 
the specific family of smooth cubic threefolds is the Jacobian of this
hyperelliptic curve. The common feature in the cubic threefold and 
fourfold cases is that the limiting Hodge structure is governed by the 
divisor, determined by the smoothing direction of the specific degeneration.

It is known that the rational normal quartic is a hyperplane section of 
the Veronese surface. This paper concerns whether a similar pattern 
persists for hyperplane sections of higher-dimensional Severi varieties.
A general hyperplane section of a Severi variety is a rational homogeneous 
space with special geometric structures. Up to projective equivalence, 
these hyperplane sections are, respectively
\begin{itemize}
    \item rational normal quartic \(v_4(\mathbb{P}^1)\subset \mathbb{P}^4\);
    \item projectivized cotangent bundle \(P(T^*_{\mathbb P^2})\subset \mathbb{P}^7\);
    \item symplectic Grassmannian of lines \(\mathrm{Gr}_{\omega}(2,6)\subset \mathbb{P}^{13}\);
    \item the homogeneous space \(F_4/P_4\subset \mathbb{P}^{25}\) with the exceptional group \(F_4\).
\end{itemize}
These rational homogeneous spaces, together with the four Severi varieties, 
relate to the semisimple Lie algebras in the two top rows of the 
Freudenthal--Tits magic square (see Table~\eqref{tab:magic-square}) of division 
algebras~\cite{Landsberg-Manivel01}.
\begin{table}[htbp]
    \label{tab:magic-square}
\centering
\caption{Freudenthal's Magic Square}
\begin{tabular}{c|cccc}
\toprule
 $\mathfrak{Der}(A/B)$ & $\mathbb{R}$ & $\mathbb{C}$ & $\mathbb{H}$ & $\mathbb{O}$ \\
\midrule
$\mathbb{R}$ & $\mathfrak{so}_3$ & $\mathfrak{sl}_3$ & $\mathfrak{sp}_6$ & $\mathfrak{f}_4$ \\
$\mathbb{C}$ & $\mathfrak{sl}_3$ & $\mathfrak{sl}_3 \times \mathfrak{sl}_3$ & $\mathfrak{sl}_6$ & $\mathfrak{e}_6$ \\
$\mathbb{H}$ & $\mathfrak{sp}_6$ & $\mathfrak{sl}_6$ & $\mathfrak{so}_{12}$ & $\mathfrak{e}_7$ \\
$\mathbb{O}$ & $\mathfrak{f}_4$ & $\mathfrak{e}_6$ & $\mathfrak{e}_7$ & $\mathfrak{e}_8$ \\
\bottomrule
\end{tabular}
\end{table}
If \(V\) is such a hyperplane section, then \(\Sec(V)\) is again a cubic 
hypersurface (see. Lemma~\ref{lem:sec-hyp}). Motivated by the result on 
the rational normal quartics and our previous work, we investigate further
the degeneration problem for the cubic hypersurface \(\Sec(V)\), and to 
compare the limit mixed Hodge structures with the Severi varieties cases.

Let \(V \subset \mathbb{P}^{n+1}\) be a general hyperplane section of 
a Severi variety \(S\subset \mathbb{P}^{n+2}\), and assume that \(d = \dim V >1\).
Set \(X_0=\Sec(V)\), so that \(X_0\) is a cubic hypersurface of 
dimension \(n=\frac{3}{2}(d+1)\). Let \(X_\infty\subset\mathbb P^{n+1}\) 
be a general smooth cubic hypersurface meeting \(V\) and the smooth 
locus of \(X_0\) transversely, and put \(B=V\cap X_\infty\). 
We consider the pencil
\begin{equation}
\label{eqs:pencil-cubic}
    \mathcal X=\{F+tG=0\}\subset\mathbb {P}^{n+1}\times\Delta
\end{equation}
where \(F\) and \(G\) define \(X_0\) and \(X_\infty\), respectively. 
After the base change \(t\mapsto t^2\) and blowing up along
\(V\subset X_0\), one obtains a semistable model \(\pi:\mathfrak X\to\Delta\).
The main result is the following.
\begin{theorem}[= Theorem~\ref{thm:main-limit-mhs}]
\label{thm:intro-main}
The limit mixed Hodge structure \(\mathrm{H}^n_{\lim}\) associated with 
the above semistable degeneration is a pure Hodge structure of weight \(n\). 
More precisely, there is an isomorphism of polarized Hodge structures
\[
    \mathrm{H}^n_{\lim}\cong \mathrm{H}^{d-1}(B;\mathbb Q)_{\mathrm{van}}(-m)
    \oplus \mathrm{IH}^n(X_0;\mathbb Q)_v ,
\]
where \(m=\frac{d+5}{4}\) and 
\[
\begin{split}
    \mathrm{H}^{d-1}(B;\mathbb Q)_{\mathrm{van}} &= \ker\{i_*\colon \mathrm{H}^{d-1}(B;\mathbb Q)(-1)\to \mathrm{H}^{d+1}(V;\mathbb Q)\},\\
    \mathrm{IH}^n(X_0;\mathbb Q)_v &= \ker\{\mathrm{IH}^n(X_0;\mathbb Q)\to \mathrm{H}^{d+1}(V;\mathbb Q)(-m+1)\}.
\end{split}
\]
The Hodge structure on the intersection cohomology \(\mathrm{IH}^n(X_0;\mathbb Q)_v\) 
is isomorphic to a Hodge-Tate twist \(\mathbb Q\left(-\frac n2\right)^{\oplus k}\) 
where \(k=\dim \mathrm{IH}^n(X_0;\mathbb Q)_v\).
\end{theorem}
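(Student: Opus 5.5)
The plan is to run the Clemens--Schmid / Steenbrink machinery on the semistable model \(\pi\colon\mathfrak X\to\Delta\) and identify the graded pieces of the limit weight filtration.

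\textbf{Central fiber.} After the base change \(t\mapsto t^2\) and the blow-up along \(V\), the central fiber is \(\mathfrak X_0=\widetilde X_0\cup E\). Here \(\widetilde X_0=\mathrm{Bl}_V X_0\) is the strict transform. The exceptional component \(E\) is a quadric fibration over \(V\): it is the double cover of \(\mathbb P(N_{V/\mathbb P^{n+1}})\) branched along the quadric bundle cut out by the Hessian of \(F\) and by \(G|_V\). So its fibers over \(V\setminus B\) are smooth quadrics of dimension \(n-d\), and they degenerate over \(B\). The double locus is \(D=\widetilde X_0\cap E\), which is the quadric bundle \(\mathbb P(\text{cone directions})\to V\) given by the secant-line fibration.

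For the Severi-type geometry (Lemma~\ref{lem:sec-hyp}), \(\widetilde X_0\) should be a \(\mathbb P^1\)-bundle over a quadric fibration (the entry locus description). Consequently its cohomology, as well as that of \(D\), is generated by algebraic classes together with \(\mathrm H^*(V)\), and all of it is Hodge--Tate.

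\textbf{Steps.}

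First, compute \(\mathrm H^*\) of \(D\), \(\widetilde X_0\) and \(E\) using the decomposition theorem for quadric fibrations, as in the sections this statement refers to. For \(E\to V\) with relative dimension \(r=n-d\) even, namely \(r=(d+3)/2\), the decomposition reads
\[
Rf_*\mathbb Q_E\simeq\bigoplus_j \mathbb Q_V[-2j]\oplus \mathcal L[-r],
\]
where \(\mathcal L\) is the rank-one local system on \(V\setminus B\) given by the discriminant double cover, pushed forward. The resulting new contribution is \(\mathrm H^{d}(V,j_*\mathcal L)\). This equals the anti-invariant cohomology of the double cover \(\widetilde V\to V\) branched along \(B\), and that in turn is \(\mathrm H^{d-1}(B)_{\mathrm{van}}(-1)\) after the standard Lefschetz/Gysin comparison. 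The shift by \(r\) yields the Tate twist \(m=(d+5)/4\) in degree \(n\).

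Second, feed these computations into the weight spectral sequence \(E_1^{-r,q+r}\) of Steenbrink. Show that \(E_2^{\pm1,\,n\mp1}=0\), i.e.\ that the differentials
\[
\mathrm H^{n-2}(D)(-1)\to \mathrm H^{n}(\widetilde X_0)\oplus \mathrm H^n(E)\to \mathrm H^n(D)
\]
are respectively injective on the relevant parts and surjective onto the relevant parts. This reduces to checking the restriction maps on the \(\mathrm H^*(V)\)-summands and the hyperplane class, which is where the kernels in the definitions of \(\mathrm H^{d-1}(B)_{\mathrm{van}}\) and \(\mathrm{IH}^n(X_0)_v\) come from. Purity then follows, because \(N=0\) once the off-diagonal \(E_2\) terms vanish. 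This is the contrast with the Severi case, where \(D\) contributed one-dimensional outer pieces.

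Third, identify \(E_2^{0,n}\) with the stated direct sum. The \(E\)-part is the vanishing-cohomology piece from the first step. For the \(\widetilde X_0\)-part, use the decomposition theorem for the small-ish resolution \(\widetilde X_0\to X_0\): it gives \(\mathrm H^n(\widetilde X_0)=\mathrm{IH}^n(X_0)\oplus(\text{terms from }V)\), and the \(V\)-terms cancel against \(D\) in the spectral sequence, leaving \(\mathrm{IH}^n(X_0)_v\). Since \(\mathrm{IH}^*(X_0)\) is generated by algebraic cycles coming from the homogeneous structure, it is of Tate type \(\mathbb Q(-n/2)^{\oplus k}\). Finally, compatibility of polarizations comes from the Clemens--Schmid sequence and the orthogonality of the two summands under the intersection form.

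\textbf{Main obstacle.} I expect the main difficulty to be the exact bookkeeping of the second step. One has to prove that the Gysin and restriction maps between the \(\mathrm H^*(V)\)-summands of \(D\), \(E\) and \(\widetilde X_0\) have exactly the ranks needed to kill \(E_2^{\pm1}\). This requires knowing the classes of \(D\) in \(E\) and in \(\widetilde X_0\) in terms of the relative hyperplane classes. I would first try to do this uniformly using the Leray--Hirsch descriptions over \(V\), tracking only the degree-\(n\) part.
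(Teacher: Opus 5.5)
Your Step~1 rests on a parity error, and the rest of the plan inherits it. Since $d+1=\dim S\in\{4,8,16\}$, one has $d\equiv 3\pmod 4$. So the relative dimension $n-d=\frac{d+3}{2}\in\{3,5,9\}$ of $E\to V$ is \emph{odd}. It is the double locus $E_0\to V$, of relative dimension $\frac{d+1}{2}$, that is even.

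Smooth odd-dimensional quadrics have no middle cohomology. Hence over $V\setminus B$ every $R^if_*\mathbb{Q}$ is constant of rank at most one, generated by a power of a relative hyperplane class. There is no discriminant local system $\mathcal L$, and no summand $j_*\mathcal L[-r]$. Independently, no such $\mathcal L$ can exist: $[B]=3h$ is not divisible by $2$ in $\mathrm{H}^2(V;\mathbb{Z})$, so $\mathrm{H}^1(V\setminus B;\mathbb{Z}/2)=0$ and no double cover of $V$ is branched along $B$. Your twist count also cannot close, since $\frac{r}{2}=\frac{d+3}{4}\notin\mathbb{Z}$.

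The new cohomology comes instead from the singular fibers. Over $s\in B$ the fiber is a cone over a smooth quadric of even dimension $\frac{d+1}{2}$, and its cohomology in degree $\frac{d+5}{2}$ has rank two instead of one. This produces the summand $\mathrm{IC}_B(L)$, supported on $B$, in $Rf_*\mathrm{IC}_E$ (Lemma~\ref{lem:var-coh-loc-sys} and Theorem~\ref{prop:perv-decomp-quad-fib}), with $L$ trivial because $B$ is simply connected. Its hypercohomology is $\mathrm{H}^{d-1}(B)(-m)$, sitting directly in $\mathrm{H}^n(E)$, and $2m=n-(d-1)$ is where the twist comes from.

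Your proposed bridge $\mathrm{H}^d(V,j_*\mathcal L)\cong \mathrm{H}^{d-1}(B)_{\mathrm{van}}(-1)$ is false even where double covers exist. The left side is the anti-invariant part of $\mathrm{H}^d$ of the double cover, of weight $d$, while the right side has weight $d+1$. For $V=\mathbb{P}^1$, one is $\mathrm{H}^1$ of a hyperelliptic curve and the other is a Tate structure of different rank. What you describe is Collino's mechanism for $d=1$, where $E$ is a family of quadric surfaces; the hypothesis $d>1$ is exactly what flips the parity.

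Step~2 has two further problems. First, the vanishing of $\mathrm{Gr}^W_{n\pm1}$ has nothing to do with the maps $\mathrm{H}^{n-2}(D)(-1)\to\mathrm{H}^n(\widetilde X_0)\oplus\mathrm{H}^n(E)\to\mathrm{H}^n(D)$; those compute $E_2^{0,n}$. The terms $E_1^{\pm1,n\mp1}$ are twists of $\mathrm{H}^{n-1}(E_0)$. This group is zero because $n-1$ is odd, $E_0\to V$ is a smooth quadric bundle of even relative dimension, and $V$ has only even cohomology.

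Second, for $E_2^{0,n}$ the cancellation of $\mathrm{H}^*(V)$-pieces you anticipate is the routine part. The corresponding perverse cohomology sheaves of $g_*\mathrm{IC}_{E_0}$, $\tau_*\mathrm{IC}_{\widetilde X_0}$ and $f_*\mathrm{IC}_E$ are matched isomorphically by restriction and Gysin. The substantive interaction is between $\mathrm{IC}_{X_0}$, $\mathrm{IC}_B(L)$, and the summand $\mathrm{IC}_V^-$ of the middle cohomology of $E_0$ complementary to the power of the relative hyperplane class. On hypercohomology it gives
\[
\mathrm{H}^{d-1}(V)(-m)\xrightarrow{(i_*,\,j^*)}\mathrm{IH}^n(X_0)\oplus\mathrm{H}^{d-1}(B)(-m)\xrightarrow{i^*-j_*}\mathrm{H}^{d+1}(V)(-m+1).
\]
Hard Lefschetz for $j_*j^*=\cup[B]$ then splits the middle cohomology as $\mathrm{H}^{d-1}(B)_{\mathrm{van}}(-m)\oplus\mathrm{IH}^n(X_0)_v$. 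Without a summand supported on $B$, your bookkeeping has no source for the maps $j^*,j_*$ that define the vanishing part.

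Finally, for the record, $\widetilde X_0$ is the conormal variety of $\pi_H(S)\cong S$, i.e.\ a projective bundle over $S$. That is the cleanest reason $\mathrm{IH}^n(X_0)_v$ is Hodge--Tate.
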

By this statement, we see that the nilpotent monodromy \(N=0\), which
is different from the higher-dimensional Severi varieties cases 
concerned in~\cite[Thm. 1.1]{Lyu-Zheng-secant}. However, the essential 
part of the limiting Hodge structure is still governed by the divisor 
\(B\) determined by the deformation direction. The similarity is due 
to a similar semistable reduction process in the proof. The central 
fiber of the semistable model \(\pi:\mathfrak X\to\Delta\) has two 
irreducible components \(D_1\cup D_2\) where \(D_1\) is isomorphic to 
the blow-up of \(X_0\) along \(V\) and \(D_2\) is a quadric fibration 
over \(V\) whose discriminant locus is \(B\). We apply the monodromy 
weight spectral sequence associated with the SNC divisor \(D_1\cup D_2\) 
to determine the limit mixed Hodge structure \(\mathrm{H}^n_{\lim}\). 
The technical part is to compute the cohomology groups of \(D_1\), 
\(D_2\) and \(D_1\cap D_2\). 

The geometry of \(D_1\) can be well understood by the projective 
duality~\ref{cor:dual-hypsec-severi} and the geometry of secant variety.
In~\cite{Lyu-Zheng-secant}, we generalized Beauville's method to compute 
the cohomology of quadric fibrations. In the present paper, the general 
framework of perverse sheaves and BBDG decomposition theorem is used 
to describe the related restriction and Gysin maps. Our calculation 
shows that the cohomology of the quadric fibration \(D_2\) captures 
the essential information about \(\mathrm{H}^n_{\lim}\).

The paper is organized as follows.  Section~\ref{sec:hodge-background}
recalls the degeneration of Hodge structures and the limit mixed Hodge 
structure by Deligne's canonical extension, the nearby cycle complex, 
and the logarithmic de Rham complex. Section~\ref{sec:deg-cubic} 
studies the projective geometry of hyperplane sections of Severi 
varieties and constructs the semistable model. In Section~\ref{sec:quadric-fibrations}, 
we prove the decomposition theorem for the quadric fibrations.
Section~\ref{sec:proof-main} combines these ingredients to prove 
Theorem~\ref{thm:intro-main}.

\section{Limits of Hodge structures}
\label{sec:hodge-background}

\subsection{Variation of Hodge structures}
This subsection follows the standard treatments in~\cite{Schmid-VHS-73,Steenbrink-LMHS,Mixed-HS}.
Let \(f\colon X^*\to \Delta^*\) be a smooth and proper map from a complex 
manifold \(X^*\) to a punctured disk \(\Delta^*\). Let 
\(A\subset \mathbb{R}\) be a subring (usually \(\mathbb{Z}\) or 
\(\mathbb{Q}\)). The cohomology groups of the fibers \(\mathrm{H}^k(X_t; A)\) 
for \(t\in \Delta^*\) carry canonical Hodge structures. They are 
glued into a \emph{variation of Hodge structures of weight \(k\)} on 
the punctured disk \(\Delta^*\), which consists of  
\begin{itemize}
    \item the local system \(\mathbb{V}\coloneqq R^kf_*\underline{A}_{X^*}\)
    of cohomology groups of fibers.
    
    \item the holomorphic vector bundle 
    \(\mathcal{V}\coloneqq \mathbb{V}_{\mathbb C}\otimes_{\mathbb C} \mathcal{O}_{\Delta^*}\),
    which is endowed with the Gauss-Manin connection \(\nabla\) such
    that \(\mathcal{V}^{\nabla} = \mathbb{V}_{\mathbb C}\). The holomorphic bundle 
    \(\mathcal{V}\) is canonically isomorphic to the hypercohomology 
    sheaf \(\mathbb{R}^kf_*\Omega^{\bullet}_{X^*/\Delta^*}\) of the 
    relative de Rham complex \(\Omega^{\bullet}_{X^*/\Delta^*}\);

    \item the decreasing Hodge filtration \(\mathcal{F}^p\subset \mathcal{V}\) 
    for \(0\leq p\leq k\) defined by
    \[
    \mathcal{F}^p\coloneqq \mathbb{R}^kf_*\Omega^{\bullet \geq p}_{X^*/\Delta^*}
    \]
    where \(\Omega^{\bullet \geq p}_{X^*/\Delta^*}\) is the stupid truncation on \(\Omega^{\bullet}_{X^*/\Delta^*}\). 
    This filtration satisfies the Griffiths transversality
    \[
        \nabla(\mathcal{F}^p)\subset \Omega^1_{\Delta^*}\otimes \mathcal{F}^{p-1}.
    \] 
\end{itemize}
If we replace \(\mathbb{V}\) by the locally constant sheaf of primitive 
cohomology groups of \(X_t\), the polarization on 
\(\mathrm{H}^k(X_t; A)_{\mathrm{prim}}\) induces 
\begin{itemize}
    \item the \((-1)^k\)-symmetric bilinear form of local systems
    \[
        Q\colon \mathbb{V}\otimes \mathbb{V}\to \underline{A}(-k).
    \]
\end{itemize}
Then the datumn \((\mathbb{V}, \mathcal{F}^{p}, \nabla, Q)\) is called 
an \(A\)-polarized variation of Hodge structures of weight \(k\) on 
the punctured disk \(\Delta^*\).

We employ the notion of \emph{Deligne's canonical extension} to describe 
the asymptotic behavior of Hodge structures approaching to zero and 
the corresponding limit mixed Hodge structure. The extension of the 
integrable connection \(\mathcal{V}, \nabla\) consists of a holomorphic 
bundle \(\widetilde{\mathcal{V}}\) on the unit disk \(\Delta\) and 
a logarithmic connection 
\[
\widetilde{\nabla}\colon \widetilde{\mathcal{V}}\to \Omega^1_{\Delta}(\log 0)\otimes\widetilde{\mathcal{V}}
\] 
such that 
\((\widetilde{\mathcal{V}}, \widetilde{\nabla})|_{\Delta^*} \cong (\mathcal{V}, \nabla)\). 
For any \(t\in \Delta^*\), the monodromy transformation 
\[
T_t\colon \mathrm{H}^k(X_t; A) \to \mathrm{H}^k(X_t; A)
\]
associated with the local system \(\mathbb{V}\) encodes key 
information of the extension, and leads to the limiting Hodge structure.

Let \(\mathfrak{h} = \{u\in \mathbb{C}~|~ \mathrm{Im}(u) > 0\}\) be 
the universal cover of \(\Delta^*\) via the exponential map
\[ 
e\colon\mathfrak{h}\to \Delta^*, ~e(u)=e^{2\pi i u},
\]
and let \(\tilde{X}^* = X\times_{\Delta^*} \mathfrak{h}\). Since
\(\mathfrak{h}\) is contractible, the local system \(e^*\mathbb{V}\)
admits a global trivialization 
\[
   e^*\mathbb{V}\simeq \mathfrak{h}\times \mathbb{V}_{\infty}
\]
whose fiber \(\mathbb{V}_{\infty}\) is isomorphic to the \(A\)-module 
\(\mathrm{H}^k(\tilde{X}^*; A)\). We call \(\mathbb{V}_{\infty}\) the
\emph{canonical fiber} of the local system \(\mathbb{V}\). Fix 
\(t\in \Delta^*\), and choose \(u\in \mathfrak{h}\) such that \(t=e^{2\pi i u}\). 
The natural embedding \(j_u\colon X_t\hookrightarrow \tilde{X}^*, ~x\mapsto (x, u)\) induces an isomorphism
\[
   j^*_u\colon \mathrm{H}^k(X_t; A)\cong \mathrm{H}^k(\tilde{X}^*; A).
\]
Via this identification, the monodromy action on \(\mathrm{H}^k(X_t; A)\)
induces the monodromy transformation  
\[
T\colon \mathrm{H}^k(\tilde{X}^*; A)\to \mathrm{H}^k(\tilde{X}^*; A).
\]
It is indeed independent of the choices of \(t\) and \(u\). 
Consider the canonical diffeomorphism 
\[
h\colon \tilde{X}^*\to \tilde{X}^*, (x, u)\to (x, u+1)
\]
and the induced linear transformation \(h^*\) on \(\mathrm{H}^k(\tilde{X}^*; A)\).
Under this identification one has \(T=h^*\); see~\cite[\S.~II, 2.4]{Kulikov-MHS}.



The monodromy theorem~\cite[Thm. 6.1]{Schmid-VHS-73} asserts that \(T\) is quasi-unipotent and 
the index of the unipotency is at most \(k+1\), that is, all eigenvalues 
of the semisimple part \(T_s\) of \(T\) have finite orders, and the 
unipotent part \(T_u\) satisfies \((T_u-I)^{k+1}=0\). By finite index 
of unipotency, one defines the nilpotent map on the canonical fiber 
\(\mathbb{V}_{\infty, \mathbb{C}}=\mathrm{H}^k(\tilde{X}^*; \mathbb{C})\)
to be
\[
N\coloneqq -\frac{\log T_u}{2\pi i} =\frac{1}{2\pi i}\sum_{j\geq 1}^{k} \frac{(I-T_u)^{j}}{j}.
\]

Assume that \(T=T_u\) is unipotent. A cohomology class
\(v\in \mathrm{H}^k(\tilde{X}^*; \mathbb{C})\) can be viewed as a 
horizontal section in \(\Gamma(\mathfrak{h}, e^*\mathcal{V})^{\nabla}\),
or a multivalued horizontal section of \(\mathcal{V}\). 
Using the nilpotent monodromy \(N\), we can define a new holomorphic 
section 
\[
    \tilde{v}(u)=\mathrm{exp}(2\pi i uN)v(u)
\]
of the holomorphic bundle \(e^*\mathcal{V}\). A horizontal section
satisfies \(Tv(u)=v(u+1)\). It follows that \(\tilde{v}\) is invariant 
under the transformation \(u\mapsto u+1\):
\[
   \tilde{v}(u+1)= \mathrm{exp}(2\pi i uN)T^{-1}v(u+1)=\tilde{v}(u).
\]
Hence \(\tilde{v}\) descends to a single-valued section of \(\mathcal{V}\) 
over \(\Delta^*\). By \(u=\frac{\log t}{2\pi i}\), let us write 
\(\tilde{v}(t) = t^N v(\frac{\log t}{2\pi i})\) as a meromorphic section
on \(\Delta\). Since \(v\) is horizontal, we have
\[
\nabla\tilde{v}(t)=N t^N v(\frac{\log t}{2\pi i})\otimes \frac{dt}{t}=N\tilde{v}(t)\otimes \frac{dt}{t}.
\]
Therefore, under the \(N\)-twisting of the frame \(\mathbb{V}_{\infty}\), 
the new frame \(\widetilde{\mathbb{V}}_{\infty}\) leads to a 
holomorphic bundle over \(\Delta\)
\[
\widetilde{\mathcal{V}}\coloneqq \widetilde{\mathbb{V}}_{\infty}\otimes_{\mathbb{C}} \mathcal{O}_{\Delta}.
\]
The connection \(\nabla\) naturally extends to a meromorphic 
connection \(\widetilde{\nabla}\) having at most simple pole at \(t=0\),
and the residue map at \(0\) 
\[
\mathrm{Res}_0(\widetilde{\nabla})\colon \widetilde{\mathcal{V}}(0)\to \widetilde{\mathcal{V}}(0)
\] 
is equivalent to the nilpotent monodromy \(N\) via the isomorphism
\(\mathbb{V}_{\infty, \mathbb{C}}\cong \widetilde{\mathcal{V}}(0)\). 
This logarithmic extension \((\widetilde{\mathcal{V}}, \widetilde{\nabla})\) 
is the Deligne's canonical extension of \((\mathcal{V}, \nabla)\) 
whose residue at \(0\) has zero eigenvalues.

There are two significant filtrations related to the nilpotent monodromy \(N\):
\begin{itemize}
    \item the \emph{limiting Hodge filtration} \(\widetilde{\mathcal{F}}^p\subset \widetilde{\mathcal{V}}\) 
    is the decreasing filtration 
    \[
    \widetilde{\mathcal{F}}^p\coloneqq \lim_{\mathrm{Im}(u)\to \infty} 
    \mathrm{exp}(2\pi i uN)\cdot \mathcal{F}^p_u
    \]
    extending the Hodge filtration \(\mathcal{F}^p\).

    \item let \(\mathbb{V}_{\infty, \mathbb{Q}}=\mathrm{H}^k(\tilde{X}^*; \mathbb{Q})\), 
    the \emph{weight filtration} \(W(N)\) on \(\mathbb{V}_{\infty, \mathbb{Q}}\)
    induced by the nilpotent map \(N\) is the unique increasing filtration
    \[
    0\subset W_0\subset \cdots \subset W_{2k} = \mathbb{V}_{\infty, \mathbb{Q}}
    \]
    such that \(N(W_l)\subset W_{l-2}\), and the linear map
    \[
    N^l\colon \mathrm{Gr}^W_{k+l} \mathbb{V}_{\infty, \mathbb{Q}}\to \mathrm{Gr}^W_{k-l} \mathbb{V}_{\infty, \mathbb{Q}}(-l)
    \] 
    is an isomorphism for all \(l\geq 0\).
\end{itemize}
Let \(F^p_{\infty}\subset \mathbb{V}_{\infty,\mathbb{C}}\) denotes 
the limiting Hodge filtration that corresponds to the filtration
\(\widetilde{\mathcal{F}}^p(0)\subset \widetilde{\mathcal{V}}(0)\).
Schmid showed~\cite{Schmid-VHS-73} that the data
\begin{equation}
\label{eqs:mixed-Hodge-data}
    \mathbb{V}^k_{\lim}\coloneqq (\mathbb{V}_{\infty,\mathbb{Q}}, W(N), F^p_{\infty})
\end{equation}
yields a mixed Hodge structure on the canonical fiber, which is the 
so-called limit mixed Hodge structure for the variation 
\((\mathbb{V}, \mathcal{F}^{p}, \nabla, Q)\) of weight \(k\) on the 
punctured disk.

\subsection{Logarithmic de Rham complex}
In his paper~\cite{Steenbrink-LMHS}, Steenbrink constructed the 
limiting mixed Hodge structure for semistable degenerations.
A proper holomorphic map \(f\colon X\to \Delta\) is a semistable 
degeneration if the central fiber \(D = f^{-1}(0)\) is a simple normal 
crossing divisor in \(X\), that is, one can choose a local coordinate
of \(X\) around any \(p\in D\) such that \(f(z_1,\ldots, z_n)=z_1z_2\cdots z_v\) 
for some \(v\leq n\). In this geometric setting, the canonical extension 
and the limiting mixed Hodge structure can be described in terms of the 
logarithmic de Rham complex. The following review can be found 
in~\cite{Steenbrink-LMHS} and \cite[Chap.~11]{Mixed-HS}.

The relative de Rham complex on \(X\) with logarithmic poles along 
\(D\) is the complex  
\[
    \Omega^{\bullet}_{X/\Delta}(\log D)\coloneqq \Omega^{\bullet}_{X}(\log D)/f^*\Omega^{1}_{\Delta}(\log 0)\otimes \Omega^{\bullet-1}_X(\log D).
\]
The \(k\)-th relative hypercohomology sheaf 
\(\mathbb{R}^kf_*\Omega^{\bullet}_{X/\Delta}(\log D)\) is a locally 
free \(\mathcal{O}_\Delta\)-module such that
\[
    \mathbb{R}^kf_*\Omega^{\bullet}_{X/\Delta}(\log D)|_{\Delta^*}=\mathbb{R}^kf_*\Omega^{\bullet}_{X^*/\Delta^*}
\] 
The connecting homomorphism 
\[
    \widetilde{\nabla}\colon \mathbb{R}^kf_*\Omega^{\bullet}_{X/\Delta}(\log D)\to 
    \Omega^1_{\Delta}(\log 0)\otimes \mathbb{R}^kf_*\Omega^{\bullet}_{X/\Delta}(\log D)
\]
associated to the exact sequence of complexes
\[
    0\to f^*\Omega^1_{\Delta}(\log 0)\otimes \Omega^{\bullet}_{X/\Delta}(\log D)[-1]
    \to \Omega^{\bullet}_{X}(\log D)\to \Omega^{\bullet}_{X/\Delta}(\log D)\to 0.
\]
defines a meromorphic connection that extends the Gauss-Manin 
connection \(\nabla\) on the de Rham complex 
\(\mathbb{R}^kf_*\Omega^{\bullet}_{X^*/\Delta^*}\). Since 
\(\widetilde{\nabla}\) has logarithmic poles at \(0\), the monodromy 
action \(T_t\) for \(t\in \Delta^*\) extends to a limit monodromy 
\(T_0\) satisfying the relation (see~\cite[\S~I,~7.8]{Kulikov-MHS})
\[
    T_0= \mathrm{exp}(-2\pi i \mathrm{Res}_0(\widetilde{\nabla})).
\]
Therefore the logarithmic connection 
\((\mathbb{R}^kf_*\Omega^{\bullet}_{X/\Delta}(\log D), \widetilde{\nabla})\) 
is Deligne's canonical extension of \((\mathcal{V}, \nabla)\). 
The hypercohomology 
\[
\mathbb{H}^k(D, \Omega^{\bullet}_{X/\Delta}(\log D)\otimes_{\mathcal{O}_X} \mathcal{O}_D)
\]
is isomorphic to the canonical fiber 
\(\mathbb{V}_{\infty,\mathbb{C}} = \mathrm{H}^k(\tilde{X}^*; \mathbb{C})\) due to
the nearby cycle complex and Theorem~\ref{thm:nearby-cycl-log-deRham}.

Let \(k:\tilde{X}^*\to X\) be the composition of the universal covering 
\(\tilde{X}^*\to X^*\) and the open embedding \(j\colon X^*\hookrightarrow X\),
and let \(i\colon D\hookrightarrow X\) be the closed embedding. The 
nearby cycle complex \(\psi_f \mathbb{C}_{X}\) of the holomorphic map 
\(f\colon X\to \mathbb{C}\) is defined to be the constructible complex
\(i^{-1}Rk_* k^*\underline{\mathbb{C}}_{X}\) in the derived category 
\(D^b_c(D; \mathbb{C})\).
\begin{theorem}\cite[Theorem 11.16]{Mixed-HS}
    \label{thm:nearby-cycl-log-deRham}
There is a quasi-isomorphism
\begin{equation}
\label{eqs:nearby-cycle-qis}
    \psi_f \mathbb{C}_{X}\simeq \Omega^{\bullet}_{X/\Delta}(\log D)\otimes_{\mathcal{O}_X} \mathcal{O}_D
\end{equation}  
in the derived category \(D^b_c(D;\mathbb{C})\), which yields the isomorphism
\[
    \mathbb{H}^k(D, \Omega^{\bullet}_{X/\Delta}(\log D)\otimes_{\mathcal{O}_X} \mathcal{O}_D)
    \cong \mathbb{H}^k(D, i^{-1}Rk_* k^*\underline{\mathbb{C}}_{X})
    \cong \mathrm{H}^k(\tilde{X}^*; \mathbb{C}).
\]
\end{theorem}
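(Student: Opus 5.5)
The quasi-isomorphism will be built in two stages, following Steenbrink. The intermediate object is a logarithmic de Rham complex enlarged by a formal logarithm of \(t\). Put \(u=\frac{\log t}{2\pi i}\), a holomorphic function on \(\tilde X^*\) (\(t=f\) is the parameter on \(\Delta\)). Let \(\Omega^\bullet_X(\log D)[u]\) be the complex of polynomials in \(u\) with coefficients in \(\Omega^\bullet_X(\log D)\), with differential
\[
d(\omega u^j)=d\omega\, u^j+\tfrac{j}{2\pi i}\,\tfrac{dt}{t}\wedge\omega\, u^{j-1}.
\]
I will show that the restriction \(i^{-1}\Omega^\bullet_X(\log D)[u]\) maps quasi-isomorphically both to \(\psi_f\mathbb C_X\) and to \(\Omega^\bullet_{X/\Delta}(\log D)\otimes\mathcal O_D\).

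\emph{First map.} Pulling forms back along \(k\) and substituting for \(u\) the actual function \(\log t/2\pi i\) gives a morphism of complexes \(\Omega^\bullet_X(\log D)[u]\to k_*\Omega^\bullet_{\tilde X^*}\). The target resolves \(Rk_*\mathbb C_{\tilde X^*}\) after restriction to \(D\), because small neighbourhoods of points of \(D\) meet \(X^*\) in Stein sets whose preimages in \(\tilde X^*\) are again Stein, so the holomorphic Poincaré lemma gives acyclicity. This yields a map \(i^{-1}\Omega^\bullet_X(\log D)[u]\to\psi_f\mathbb C_X\). To see it is a quasi-isomorphism, I work on stalks at \(p\in D\) with local coordinates in which \(f=z_1\cdots z_v\).
\begin{itemize}
\item \emph{Topological side.} The stalk of \(\psi_f\) computes the cohomology of the Milnor fibre, which is homotopic to \((S^1)^{v-1}\). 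Its cohomology is therefore the exterior algebra on the classes of \(dz_i/z_i\) modulo \(\sum_i dz_i/z_i\).
\item \emph{Algebraic side.} Deligne's computation shows that \(\Omega^\bullet_X(\log D)\) has local cohomology \(\bigwedge^\bullet\langle dz_1/z_1,\dots,dz_v/z_v\rangle\).
\item \emph{Effect of \(u\).} Filtering by degree in \(u\), the \(E_1\) page is that exterior algebra tensored with \(\mathbb C[u]\), with differential given by wedge with \(dt/t=\sum_i dz_i/z_i\) times \(\partial_u\). This Koszul-type complex has cohomology exactly \(\bigwedge^\bullet\big(\langle dz_i/z_i\rangle/\langle dt/t\rangle\big)\), which matches the topological side.
\end{itemize}

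\emph{Second map.} The map \(\Omega^\bullet_X(\log D)[u]\to\Omega^\bullet_{X/\Delta}(\log D)\otimes\mathcal O_D\) sends \(u\mapsto0\), divides out \(dt/t\wedge\Omega^{\bullet-1}\), and tensors with \(\mathcal O_D\). I check this is a quasi-isomorphism in the same local coordinates. There \(\Omega^\bullet_{X/\Delta}(\log D)\otimes\mathcal O_D\) is, up to homotopy, the same exterior algebra modulo \(dt/t\), and the relevant contraction argument is the one used for the ordinary log de Rham complex. Alternatively, one can use the exact sequence defining \(\widetilde\nabla\) together with the fact that multiplication by \(t\) kills the quotient.

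Composing the two zig-zags gives \eqref{eqs:nearby-cycle-qis}. For the cohomological statement, I shrink \(\Delta\) so that \(X\) deformation retracts onto \(D\) (properness of \(f\)). Then \(\mathbb H^k(D,i^{-1}Rk_*\mathbb C)=\mathbb H^k(X,Rk_*\mathbb C)=\mathrm H^k(\tilde X^*;\mathbb C)\).

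The main obstacle is the local stalk computation, and above all checking that the formal variable \(u\) exactly cancels the extra class \(dt/t\). This is where the topology of the Milnor fibre has to be matched with the Koszul complex; the remaining steps are formal.
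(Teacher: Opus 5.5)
The paper does not prove this statement; it quotes \cite[Theorem 11.16]{Mixed-HS}. Your argument is the standard Steenbrink argument behind that reference, so it is correct in outline: you build the auxiliary complex \(\Omega^\bullet_X(\log D)[u]\) with \(u=\log t/2\pi i\), map it to \(k_*\Omega^\bullet_{\tilde X^*}\) and, via \(u\mapsto 0\), to \(\Omega^\bullet_{X/\Delta}(\log D)\otimes\mathcal O_D\), and check both maps on stalks at \(p\in D\) against the Milnor fibre \(\simeq (S^1)^{v-1}\). The step that actually needs care is not the Koszul cancellation of \(u\) against \(dt/t\) but the stalk computation for \(\Omega^\bullet_{X/\Delta}(\log D)\otimes\mathcal O_D\): in the monomial-by-monomial contraction, the monomials \((z_1\cdots z_v)^k\) with \(k\ge 1\) have weight \(k\,dt/t=0\) in the relative complex and are \emph{not} contracted, and they disappear only because of \(\otimes\mathcal O_D\); your ``alternatively'' sentence about the sequence defining \(\widetilde\nabla\) is not an argument as it stands.
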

The Hodge filtration \(F^{\bullet}\) on \(\Omega^{\bullet}_{X/\Delta}(\log D)\otimes \mathcal{O}_D\) 
is given by the stupid truncation
\[
  \Omega^{\bullet\geq p}_{X/\Delta}(\log D)\coloneqq \cdots \to 0\to \Omega^{p}_{X/\Delta}(\log D)\to \Omega^{p+1}_{X/\Delta}(\log D)\to \cdots.
\]
To describe the weight filtration on 
\(\Omega^{\bullet}_{X/\Delta}(\log D)\otimes \mathcal{O}_D\), we shall 
consider the double complex \((A^{\bullet,\bullet}, d', d'')\)
\[
\begin{split}
            & A^{p,q} =\Omega_X^{p+q+1}(\log D)/W_q\Omega_X^{p+q+1}(\log D),\\
  &d' \colon  A^{p,q}\to A^{p+1, q}, ~d'(w)  = dw,\\
  &d'' \colon A^{p,q}\to A^{p, q+1}, ~d''(w) = f^*(dt/t)\wedge w,\\
  &d'd''+d''d'=0
\end{split}
\]
Let \(A_{\mathbb{C}}^{\bullet}\) denote the associated single complex 
of the double complex \((A^{\bullet,\bullet}, d', d'')\). There is a map
\[
\theta\colon \Omega^{p}_{X/\Delta}(\log D)\otimes \mathcal{O}_D\to A^{p,0}, w\to (-1)^pf^*(dt/t)\wedge w
\]
that extends to a quasi-isomorphism of complexes
\begin{equation}
\label{eqs:quasiso-rel-deRham}
   \widetilde{\theta}\colon \Omega^{\bullet}_{X/\Delta}(\log D)\otimes \mathcal{O}_D 
   \xrightarrow{\sim} A_{\mathbb{C}}^{\bullet}.
\end{equation}
The complex \(A_{\mathbb{C}}^{\bullet}\) carries the filtration:
\[
\begin{split}
    W_{r} A_{\mathbb{C}}^{\bullet} &= \bigoplus_{p,q\geq 0} W_r A^{p,q},\\
    W_rA^{p,q} &= W_{r+2q+1}\Omega_X^{p+q+1}(\log D)/W_q\Omega_X^{p+q+1}(\log D).
\end{split}
\]
Write \(D=\bigcup D_i\), where \(D_i\) are the smooth and reduced components in \(D\).
Set the following notations 
\[
\begin{split}
    & D_{I}  \coloneqq D_{i_1}\cap \cdots \cap D_{i_m}, ~I=\{i_1, \ldots, i_m\},\quad D(m)   \coloneqq\coprod_{|I|=m} D_I,\\
    & a_{I}  \colon D_{I}\hookrightarrow D,\quad  a_m\coloneqq \coprod_{|I|=m}a_{I}\colon D(m)\to D
\end{split}
\]
\begin{lemma}
The increasing filtration \(W_{\bullet}\) on \(A_{\mathbb{C}}^{\bullet}\) satisfies
\begin{equation}
\label{eqs:weight-residue-map}
\mathrm{Gr}^W_r A_{\mathbb{C}}^{\bullet}\simeq \bigoplus_{p\geq 0, -r} {a_{2p+r+1}}_*\Omega^{\bullet}_{D(2p+r+1)}[-r-2p] 
\end{equation}
\end{lemma}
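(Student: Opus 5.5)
The plan is to reduce the statement to the classical Poincaré residue isomorphism for the weight filtration on $\Omega^\bullet_X(\log D)$, applied column by column in the double complex $A^{\bullet,\bullet}$. First I would recall that, for an SNC divisor $D$ in $X$, the residue map induces an isomorphism of complexes
\[
\mathrm{Res}\colon \mathrm{Gr}^W_m\Omega^{\bullet}_X(\log D)\xrightarrow{\sim} {a_m}_*\Omega^{\bullet}_{D(m)}[-m].
\]
Locally it sends $\alpha\wedge \frac{dz_{i_1}}{z_{i_1}}\wedge\cdots\wedge\frac{dz_{i_m}}{z_{i_m}}$ to $\alpha|_{D_I}$, and the sign conventions are fixed by the ordering of components. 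This is standard; see \cite{Mixed-HS}. It commutes with $d$ up to the usual sign.

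Next I would compute the graded pieces of each term. By definition,
\[
W_rA^{p,q}=W_{r+2q+1}\Omega^{p+q+1}_X(\log D)/W_q\Omega^{p+q+1}_X(\log D).
\]
Provided $r+2q+1\geq q+1$, i.e.\ $q\geq -r$, this gives
\[
\mathrm{Gr}^W_rA^{p,q}=\mathrm{Gr}^W_{r+2q+1}\Omega^{p+q+1}_X(\log D).
\]
If $q<-r$, then $W_{r+2q+1}\subset W_q$, so $W_rA^{p,q}=0$ and there is no contribution. This is where the index condition $q\geq 0$, $q\geq -r$ comes from; the statement's ``$p\geq 0,-r$'' is this summation index. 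By the residue isomorphism with $m=r+2q+1$, we get
\[
\mathrm{Gr}^W_rA^{p,q}\cong {a_{2q+r+1}}_*\Omega^{p+q+1-(2q+r+1)}_{D(2q+r+1)}={a_{2q+r+1}}_*\Omega^{p-q-r}_{D(2q+r+1)}.
\]

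Then I would examine the differentials on $\mathrm{Gr}^W_r$. The map $d''=f^*(dt/t)\wedge$ has $f^*(dt/t)=\sum_i dz_i/z_i$ locally. It sends $W_{r+2q+1}$ into $W_{r+2q+2}$, whereas in column $q+1$ the graded piece is $\mathrm{Gr}_{r+2q+3}$. Hence $d''$ induces zero on $\mathrm{Gr}^W_r$, and the graded complex splits as a direct sum over $q$ of the rows $(\mathrm{Gr}^W_rA^{\bullet,q},d')$. Under residue, $d'$ corresponds to $\pm d$ on $D(2q+r+1)$. In total degree $n=p+q$, the $q$-th summand sits in form degree $n-2q-r$, so it is ${a_{2q+r+1}}_*\Omega^\bullet_{D(2q+r+1)}[-r-2q]$. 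Summing over $q\geq\max(0,-r)$ gives \eqref{eqs:weight-residue-map}, with the summation variable renamed $p$.

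I expect the main obstacle to be bookkeeping rather than anything conceptual. The signs in the residue map and in the total complex differential must make $\mathrm{Res}$ a genuine map of complexes, which may require twisting by $(-1)^q$ on each summand. It also has to be checked that the quotient by $W_q$ does not interfere with the graded piece when $q\geq -r$. Both are local computations in coordinates where $f=z_1\cdots z_v$.
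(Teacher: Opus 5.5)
Your proposal is correct and follows the same route as the paper. The differential that lowers the weight induces zero on \(\mathrm{Gr}^W_r\), so \(\mathrm{Gr}^W_r A^{\bullet}_{\mathbb{C}}\) splits as \(\bigoplus_{q\geq 0,-r}\mathrm{Gr}^W_{2q+r+1}\Omega^{\bullet}_X(\log D)[1]\), and the residue isomorphism finishes the argument. You correctly identify \(d''=f^*(dt/t)\wedge\) as the map with \(d''(W_r)\subset W_{r-1}\); the paper's proof writes \(d'\) there, which is a slip. Your bookkeeping of the range \(q\geq\max(0,-r)\) and of the form degree \(p-q-r\) is accurate.
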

\begin{proof}
    Since \(d'(W_r)\subset W_{r-1}\), we have
    \[
        \mathrm{Gr}^W_r A_{\mathbb{C}}^{\bullet}\simeq \bigoplus_{p\geq 0, -r} \mathrm{Gr}^W_{2p+r+1} \Omega^{\bullet}_X(\log D)[1]
    \]
    For any integer \(m\geq 1\). The residue map
    \[
        \mathrm{res}_m=\bigoplus_{|I|=m} \mathrm{res}_{I}\colon \mathrm{Gr}^W_m \Omega^{\bullet}_X(\log D)\xrightarrow{\sim} {a_m}_*\Omega^{\bullet}_{D(m)}[-m]
    \]
    is an isomorphism of complexes. The isomorphism~\eqref{eqs:weight-residue-map}
    thus follows.
\end{proof}

By the quasi-isomorphism~\eqref{eqs:nearby-cycle-qis}, the nearby 
cycle complex \(\psi_f\mathbb{Q}_X = i^{-1}Rk_* k^*\underline{\mathbb{Q}}_{X}\) 
is the natural underlying \(\mathbb{Q}\)-structure of the relative 
logarithmic de Rham complex
\(\Omega^{\bullet}_{X/\Delta}(\log D)\otimes \mathcal{O}_D\).
Moreover, \(\psi_f\mathbb{Q}_X\) admits a weight filtration that is 
compatible with the weight filtration \(W_{\bullet}\) on 
\(\Omega^{\bullet}_{X/\Delta}(\log D)\otimes \mathcal{O}_D\).

There is a complex
\(A^{\bullet}_{\mathbb{Q}}\in D^b_c(D;\mathbb{Q})\)
associated with \(i^{-1}Rj_*\mathbb{Q}_{X^*}\) with a natural 
quasi-isomorphism
\begin{equation}
\label{eqs:quasi-iso-Q-nearby-cyc}
 A^{\bullet}_{\mathbb{Q}}\xrightarrow{\sim} \psi_f\mathbb{Q}_X.
\end{equation}
The canonical filtration on the complex \(i^{-1}Rj_*\mathbb{Q}_{X^*}\)
induces an increasing filtration \(W_{\bullet,\mathbb{Q}}\) on
\(\mathrm{A}^{\bullet}_{\mathbb{Q}}\) such that
\begin{equation}
\label{eqs:Q-wt-fil}
    \mathrm{Gr}_r^{W_{\mathbb{Q}}} A^{\bullet}_{\mathbb{Q}}\cong 
    \bigoplus_{p\geq 0, -r} {a_{2p+r+1}}_*\mathbb{Q}_{D(2p+r+1)}(-r-p)[-r-2p].
\end{equation}
Moreover, the induced quasi-isomorphism
\begin{equation}
\label{eqs:complex-Q-structure}
A^{\bullet}_{\mathbb{Q}}\otimes_{\mathbb{Q}} \mathbb{C}\simeq \psi_f\mathbb{C}_X
\simeq \Omega^{\bullet}_{X/\Delta}(\log D)\otimes \mathcal{O}_D
\end{equation}
is a filtered quasi-isomorphism with respect to the filtrations
\(W_{\bullet,\mathbb{Q}}\otimes_{\mathbb{Q}} \mathbb{C}\) and \(W_{\bullet}\).

\begin{theorem}\cite[Corollary 11.23]{Mixed-HS}
\label{thm:mono-wt-fil}
The filtrations \(W_{\bullet, \mathbb{Q}}\) on \(A^{\bullet}_{\mathbb{Q}}\)
and \(F^{\bullet}\) on \(\Omega^{\bullet}_{X/\Delta}(\log D)\otimes \mathcal{O}_D\)
define
\begin{enumerate}
    \item the \emph{monodromy weight spectral sequence}
\[
   _{W} E^{-r, m+r}_1= \bigoplus_{p\geq 0, -r} \mathrm{H}^{m-r-2p}(D(2p+r+1); \mathbb{Q})(-r-p)\Rightarrow 
   \mathrm{H}^{m}(\tilde{X}^*; \mathbb{Q})
\]
that degenerates at \(E_2\);
    \item the \emph{Hodge spectral sequence}
    \[
        _{F} E^{p,q}_1 = \mathrm{H}^q(B, \Omega^{p}_{X/\Delta}(\log D)\otimes \mathcal{O}_D)\Rightarrow \mathrm{H}^{m}(\tilde{X}^*; \mathbb{C})
    \]
    that degenerates at \(E_1\).
\end{enumerate}
\end{theorem}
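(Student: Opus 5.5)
The plan is to show that the triple $(A^{\bullet}_{\mathbb Q}, W_{\bullet,\mathbb Q})$, $(A^{\bullet}_{\mathbb C}, W_\bullet, F^\bullet)$, together with the filtered comparison quasi-isomorphism~\eqref{eqs:complex-Q-structure}, forms a cohomological mixed Hodge complex on $D$. Once this is established, both statements follow from Deligne's general theory. In that theory, the weight spectral sequence of a mixed Hodge complex degenerates at $E_2$. The Hodge filtration induces the Hodge filtration of a mixed Hodge structure on hypercohomology, and its spectral sequence degenerates at $E_1$. The abutment is identified with $\mathrm{H}^m(\tilde X^*;\mathbb Q)$ by Theorem~\ref{thm:nearby-cycl-log-deRham} and~\eqref{eqs:quasi-iso-Q-nearby-cyc}.

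The first step is to write down the spectral sequence of the filtered complex $(A^\bullet_{\mathbb Q}, W_{\bullet,\mathbb Q})$ on $D$. With the standard indexing its $E_1$ page is
\[
{}_W E_1^{-r,m+r} = \mathbb H^{m}\bigl(D,\mathrm{Gr}^{W}_r A^\bullet_{\mathbb Q}\bigr).
\]
Inserting the graded pieces~\eqref{eqs:Q-wt-fil} and using that ${a_{2p+r+1}}$ is finite, so that pushforward is exact, gives
\[
\bigoplus_{p\ge 0,-r}\mathrm H^{m-r-2p}(D(2p+r+1);\mathbb Q)(-r-p),
\]
which is the claimed $E_1$ term. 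The same identification holds over $\mathbb C$ via~\eqref{eqs:weight-residue-map}. That identification also shows that the filtration induced by $F^\bullet$ on $\mathrm{Gr}^W_rA^\bullet_{\mathbb C}$ is the stupid filtration on $\Omega^\bullet_{D(2p+r+1)}$, shifted by the Tate twist.

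The key Hodge-theoretic check is the following. Each $D(k)$ is a disjoint union of smooth projective varieties, because $D$ is SNC and $f$ is proper. Hence each summand of ${}_WE_1^{-r,m+r}$ is a pure polarizable Hodge structure. Its weight is $(m-r-2p)+2(r+p)=m+r$, independently of $p$. So $\mathrm{Gr}^W_r$ is a pure Hodge complex of weight $r$, up to the shift convention. This is precisely the axiom of a cohomological mixed Hodge complex. The one remaining point is that $F$ restricted to $A^{\bullet}_{\mathbb C}$ is compatible with the truncation, i.e. $F^pA^{a,b}=\Omega^{\ge p+b+1}$-type pieces, so that $F$ induces the correctly shifted Hodge filtration on the graded pieces.

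Next I would verify that $d_1$ is a morphism of Hodge structures. A direct computation gives $d_1$ as an alternating sum of the restriction maps $\mathrm H(D(k))\to\mathrm H(D(k+1))$, coming from $d''$, and the Gysin maps $\mathrm H(D(k))\to\mathrm H(D(k-1))(-1)$, coming from $d'$ via the residue. Both kinds of map are morphisms of Hodge structures of the correct weights, so $E_2$ consists of pure Hodge structures of weight $m+r$. Deligne's argument then applies. The differentials $d_s$ for $s\ge2$ map a weight $m+r$ structure to a weight $m+r+1-s$ structure. They are strict for $F$ (Deligne's lemma on two filtrations), hence they vanish, and this is the degeneration at $E_2$. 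The degeneration of the Hodge spectral sequence at $E_1$, via the filtered quasi-isomorphism $\widetilde\theta$ of~\eqref{eqs:quasiso-rel-deRham}, is then the standard consequence for any cohomological mixed Hodge complex.

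I expect the main obstacle to be the compatibility of the $\mathbb Q$-structure with the weight filtration. One must show that $W_{\bullet,\mathbb Q}$, defined from the canonical filtration on $i^{-1}Rj_*\mathbb Q_{X^*}$, matches $W_\bullet$ under~\eqref{eqs:complex-Q-structure} as a filtered quasi-isomorphism, and not merely on $E_1$. I would first try to reduce this to the local computation on a polydisc with $f=z_1\cdots z_v$. There, the cohomology of the Milnor fibre and the residues of $dz_i/z_i$ are explicit, and the comparison of the canonical and weight filtrations is the classical Deligne computation for $\Omega_X(\log D)$. Signs in $d_1$ and the Tate twists would be checked in this local model.
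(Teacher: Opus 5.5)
Your proposal is correct and takes the same route as the result the paper quotes from Peters--Steenbrink (the paper itself notes, right after the statement, that the Steenbrink data form a mixed Hodge complex): show that $(A^{\bullet}_{\mathbb{Q}},W_{\bullet,\mathbb{Q}})$, $(A^{\bullet}_{\mathbb{C}},W_\bullet,F^\bullet)$ is a cohomological mixed Hodge complex on $D$, then apply Deligne's degeneration theorems. Two minor slips: purity of $\mathrm{H}^\bullet(D(k);\mathbb{Q})$ needs $f$ projective (or the components K\"ahler), not merely proper; and the Gysin part of $d_1$ should read $\mathrm{H}^j(D(k))(-1)\to\mathrm{H}^{j+2}(D(k-1))$, because with the twist on the target, as you wrote it, $d_1$ would not preserve the weight $m+r$.
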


Now the data
\[
    (\psi_f\mathbb{Q}_X, (A^{\bullet}_{\mathbb{Q}}, W_{\bullet,\mathbb{Q}}), (\Omega^{\bullet}_{X/\Delta}(\log D)\otimes \mathcal{O}_D, F^{\bullet}, W_{\bullet}))
\]
together with the quasi-isomorphisms~\eqref{eqs:nearby-cycle-qis},
\eqref{eqs:quasiso-rel-deRham} and~\eqref{eqs:quasi-iso-Q-nearby-cyc} 
form a mixed Hodge complex on \(D\). Moreover, the induced mixed Hodge 
structure on the canonical fiber 
\(\mathbb{V}_{\infty,\mathbb{Q}}=\mathrm{H}^k(\tilde{X}^*; \mathbb{Q})\) 
coincide with Schmid's limit mixed Hodge structure~\eqref{eqs:mixed-Hodge-data}
\[
\mathbb{V}^k_{\lim} = (\mathbb{V}_{\infty,\mathbb{Q}}, W(N), F^p_{\infty}),
\]
see~\cite[Theorem 5.9]{Steenbrink-LMHS}.

\section{Cubic hypersurfaces of secant type}
\label{sec:deg-cubic}
\subsection{Severi variety and the secant variety}
\label{subsec:severi-hyperplane}
In this section, we briefly review the Severi variety and 
the geometry of the secant variety. 

Let \(k\) be a field of characteristic zero. Let \(S\subset \mathbb{P}^N_k\)
be a nondegenerate smooth closed subvariety. The \emph{secant variety} 
of \(S\), denoted by \(\Sec(S)\), is the Zariski closure of the union 
of all lines (chords) that is secant to \(S\) in \(\mathbb{P}^N_k\):
\[
\Sec(S)\coloneqq \overline{\bigcup_{x, y\in S} \langle x, y\rangle}
\]
(for \(x=y\), the chord \(\langle x, y\rangle\) is tangent to \(S\) at \(x\)).
We say \(S\) is secant defective if \(\dim \Sec(S) < \mathrm{min}\{2n+1, N\}\).
It is easy to see that \(\Sec(S)\neq \mathbb{P}^N\) if and only if 
there exists a linear projection 
\(\pi\colon \mathbb{P}^N\dashrightarrow \mathbb{P}^{N-1}\) such that 
\(S\) is isomorphic to the image \(\pi(S)\). It was proved that \(\Sec(S)= \mathbb{P}^N\) 
if \(\dim S > \frac{2(N-2)}{3}\) (see \cite[Thm.~7.6]{Fulton-Lazasfeld}), 
which leads to the notion of \emph{Severi variety}. We say 
\(S\subset \mathbb{P}^N\) is a Severi variety if 
\(\dim S \leq \frac{2(N-2)}{3}\) and \(\Sec(S)\neq \mathbb{P}^N\). 
Therefore the Severi variety reaches the boundary dimension constraint
among the secant defective varieties. Zak~\cite[\S III,~Thm. 2.9]{Zak93} classified the Severi varieties.
Up to projective equivalence there are exactly four Severi varieties:
\begin{enumerate}
    \item the Veronese surface \(v_2(\mathbb{P}^2) \subset \mathbb{P}^5\); 
    \item the Segre variety \(\mathbb{P}^2 \times \mathbb{P}^2 \subset \mathbb{P}^8\); 
    \item the Grassmannian of lines \(\mathrm{Gr}(2,6) \subset \mathbb{P}^{14}\); 
    \item the Cartan variety \(E_6/P_1 \subset \mathbb{P}^{26}\) of the exceptional group \(E_6\). 
\end{enumerate}
Each Severi variety \(S\subset \mathbb{P}^N\) corresponds to the Jordan 
algebra \(J_3(\mathbb{A})\) of the \(3 \times 3\) Hermitian matrices 
of a real division algebra 
\(\mathbb{A}\in \{\mathbb{R},\mathbb{C},\mathbb{H},\mathbb{O}\}\) 
where \(2\dim_{\mathbb{R}} \mathbb{A}=\dim S\).

In this paper, we focus on generic hyperplane sections of Severi 
varieties. From the viewpoint of composition algebras, the Severi varieties 
and their hyperplane sections relate to corresponding Lie algebras in 
the Freudenthal--Tits magic square~\cite{Landsberg-Manivel01}. The hyperplane sections are 
projectively equivalent to the following special varieties:
\begin{enumerate}
    \item the rational normal curve \(v_4(\mathbb{P}^1)\subset \mathbb{P}^4\); 
    \item projectivized cotangent bundle \(\mathbb{P}(T^*_{\mathbb{P}^2})\subset \mathbb{P}^7\); 
    \item the symplectic Grassmannian of lines \(\mathrm{Gr}_\omega(2,6)\subset \mathbb{P}^{13}\),
    where \(\omega\) is the symplectic \(2\)-form that defines the hyperplane 
    \(\mathbb{P}^{13}\subset \mathbb{P}^{14}\cong \mathbb{P}(\wedge^2 \mathbb{C}^{6})\);
    \item the variety \(F_4/P_4\subset \mathbb{P}^{25}\) of the exceptional group \(F_4\). 
\end{enumerate}
Moreover, like the Severi varieties, the hyperplane sections are  
rational homogeneous spaces corresponding to the following simple Lie 
groups respectively
\[
    \mathrm{SO}(3), \quad \mathrm{SL}(3),\quad \mathrm{Sp}(6), \quad F_4.
\]

Despite the diverse geometric structures of the hyperplane sections, 
we will show that the geometry of their secant varieties are similar 
to those of Severi varieties, and prove several results 
concerning the projective duality of secant varieties.

\begin{lemma}
\label{lem:sec-hyp}
    Let \(S\subset \mathbb{P}^{N}\) be a Severi variety. Denote by 
    \(V\) the intersection \(S\cap H\) for a generic hyperplane \(H\) 
    in \(\mathbb{P}^{N}\). Let \(\Sec(V)\) be the secant variety of 
    \(V\) in \(H\cong \mathbb{P}^{N-1}\). Then we have 
    \(\Sec(V)=\Sec(S)\cap H\). As a consequence, \(\Sec(V)\) is a 
    cubic hypersurface whose singular locus is \(V\).
\end{lemma}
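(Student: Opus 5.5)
Both inclusions of \(\Sec(V)=\Sec(S)\cap H\) will be proved directly, and the singular locus will then be read off from the Jordan-algebra description of \(\Sec(S)\) as the determinantal cubic. The inclusion \(\Sec(V)\subset \Sec(S)\cap H\) is immediate: every chord of \(V\) is a chord of \(S\) and lies in \(H\), and \(\Sec(S)\cap H\) is closed. Both sides are irreducible of the same dimension. Indeed, \(\Sec(S)\) is the cubic hypersurface \(\{\det=0\}\) in \(\mathbb{P}(J_3(\mathbb{A}))\) and is not contained in \(H\), so \(\Sec(S)\cap H\) is a cubic hypersurface in \(H\), of dimension \(N-2\). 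It is irreducible for generic \(H\) by Bertini, since \(\dim\Sec(S)\geq 3\) in all four cases (the smallest is the chordal cubic fourfold).

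For the reverse inclusion, the plan is to show \(\dim\Sec(V)=N-2\), i.e.\ that \(V\) is secant defective by exactly the same amount as \(S\). The key input is the classical description of \(\Sec(S)\) for a Severi variety. Through a general point \(p\in\Sec(S)\setminus S\) there passes an entry locus \(\Sigma_p\subset S\), a smooth quadric of dimension \(\dim S/2\). It spans a linear space \(\langle\Sigma_p\rangle\cong\mathbb{P}^{\dim S/2+1}\), and \(\langle\Sigma_p\rangle\cap S=\Sigma_p\) (Zak's theorem, or the rank-\(\leq 2\) matrices in \(J_3(\mathbb{A})\) through \(p\)).

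Now take a general point \(p\in \Sec(S)\cap H\); it is a general point of \(\Sec(S)\) for generic \(H\). Then \(H\cap\langle\Sigma_p\rangle\) is a hyperplane of \(\langle\Sigma_p\rangle\) through \(p\), and it cuts \(\Sigma_p\) in a quadric \(Q=\Sigma_p\cap H\subset V\) of dimension \(\dim S/2-1\geq 0\). For \(\dim S\geq 2\) this quadric is nonempty, and a general line through \(p\) in \(H\cap\langle\Sigma_p\rangle\) meets \(Q\) in two points. Hence \(p\) lies on a chord of \(V\), so a dense subset of \(\Sec(S)\cap H\) lies in \(\Sec(V)\), and taking closures gives equality. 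When \(\dim Q=0\) (the Veronese case), \(Q\) consists of two points whose span is a line in the plane \(H\cap\langle\Sigma_p\rangle\); I must check it passes through \(p\). It does, because \(p\) lies on a secant line of the conic \(\Sigma_p\), and I would choose that secant line (the one through \(p\) and \(Q\)) directly. The main obstacle is precisely this step: justifying that a general point of \(\Sec(S)\cap H\) lies on a chord whose endpoints both lie in \(H\). The entry-locus quadric handles it; alternatively, one can simply invoke the known case of \(v_4(\mathbb{P}^1)\).

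For the singular locus I will use the fact recalled earlier that \(\mathrm{Sing}\,\Sec(S)=S\). A hyperplane section of a hypersurface is singular at \(x\) either where the ambient hypersurface is singular or where \(H\) is tangent to it. The first contributes \(S\cap H=V\), and \(\Sec(V)\) is indeed singular along \(V\): its points have multiplicity at least \(2\), since \(S\) has multiplicity \(2\) in \(\Sec(S)\) and \(H\) is transverse. For the second, \(H\) is tangent to \(\Sec(S)\) at a smooth point exactly when \([H]\) lies in the dual variety of \(\Sec(S)\). That dual is \(S^\vee\) (a copy of the dual Severi variety) by the projective self-duality of the Jordan cubic, and it has codimension \(>1\). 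So a generic \(H\) avoids it, and \(\mathrm{Sing}\,\Sec(V)=V\).
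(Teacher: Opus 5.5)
Your proposal is correct and follows essentially the paper's route for the key inclusion: the entry-locus quadric of \(p\) cut by \(H\) is a quadric in \(V\) lying in the linear space \(H\cap\langle\Sigma_p\rangle\ni p\), so a line through \(p\) there meets it in two points of \(V\). Your Bertini/closure reduction and your duality argument for the singular locus (a generic \(H\) avoids the dual variety of \(\Sec(S)\), a copy of \(S\) of codimension \(>1\), so no singularities appear off \(V\)) make explicit what the paper's ``Hence'' leaves unstated; one small slip is that in the Veronese case \(H\cap\langle\Sigma_p\rangle\) is a line, not a plane, and it is itself the chord through \(p\) and the two points of \(Q\), so no extra check is needed.
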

\begin{proof}
    The inclusion \(\Sec(V)\subset \Sec(S)\cap H\) is easy. 
    A chord of any two points in \(V\) must be a chord of \(S\) and
    lie in \(H\). 

    To prove the inverse direction \(\Sec(S)\cap H\subset \Sec(V)\), 
    let \(p\in \Sec(S)\cap H\) be any point such that \(p\notin S\). 
    Consider the secant locus \(Q_p\) of \(p\) in \(S\). The intersection
    \(Q_p\cap H\) is a quadric (possibly reducible) in \(H\). For any
    \(x\in Q_p\cap H\), the joining line \(\langle x, p\rangle\) 
    is not contained in \(Q_p\cap H\) since \(x\notin S\), so that
    \(\langle x, p\rangle\) is a chord of \(S\cap H = V\). Hence 
    \(p\in \Sec(V)\).

    The secant variety \(\Sec(S)\) is known to be a cubic hypersurface 
    in \(\mathbb{P}^N\) whose singular locus is \(S\). Hence \(\Sec(V)\) 
    remains a cubic hypersurface whose singular locus is \(V = S\cap H\).
\end{proof}


\subsection{Projective dual variety}
Let \(X\subset \mathbb{P}^N\) be an irreducible nondegenerate closed 
subvariety. Consider the incidence subvariety
\[
I^{\circ}_X\coloneqq \{(x, [H])\in X\times (\mathbb P^{N})^{\vee} ~|~ \hat{T}_x X\subset H, ~x \in X_{\mathrm{sm}}\}
\] 
of pairs \((x, [H])\) such that \(H\) is a hyperplane tangent to \(X\) 
at a smooth point \(x\in X\). The Zariski closure \(I_X\) of 
\(I^{\circ}_X\) is called the \emph{conormal variety} of \(X\).
Let \(\mathrm{pr}_2\colon I_X\to (\mathbb P^{N})^{\vee}\) be the second projection. 
The dual variety \(X^{\perp}\) of \(X\) is 
defined to be the image of \(\mathrm{pr}_2\). In particular, if \(X\)
is a hypersurface, the conormal variety \(I_X\) is the graph of the 
Gauss map 
\begin{equation}
\label{eqs:Gauss-map}
    \gamma_{N-1}\colon X\dashrightarrow (\mathbb{P}^N)^{\vee}, \quad \gamma_{N-1}(x)=\hat{T}_x X, ~\forall x\in X_{\mathrm{sm}}.
\end{equation}

The self-duality of the secant cubic identifies the dual variety of
\(\Sec(S)\subset \mathbb{P}^{N}\) with the Severi variety \(S\) in
\((\mathbb{P}^{N})^{\vee}\); see~\cite{Zak93}. For a general hyperplane
section, the dual of \(\Sec(V)\subset \mathbb{P}^{N-1}\) is instead
obtained by projecting this dual variety. We use the following general
result of Tevelev~\cite[Thm. 1.23]{Tevelev:proj-dual}.

Let \(U\) be a linear space and \(W\subset U\) be a linear subspace.
Consider the linear projection 
\(\pi_{\Lambda}\colon \mathbb{P}(U)\dashrightarrow \mathbb{P}(U/W)\)
along the subspace \(\mathbb{P}(W)\). The dual variety
\(\mathbb{P}(W)^{\perp}\subset \mathbb{P}(U)^{\vee}\), consisting of the 
linear functions vanishing on \(W\), is naturally identified with the 
dual space \(\mathbb{P}(U/W)^{\vee}\) as a subspace in \(\mathbb{P}(U)^{\vee}\).
To avoid the confusion of notation, we emphasis that the dual space 
\(\mathbb{P}(W)^{\vee}\) is different from the dual variety \(\mathbb{P}(W)^{\perp}\). 

\begin{theorem}[Theorem 1.23 in \cite{Tevelev:proj-dual}]
\label{thm:dual-hyper-sec}
Let \(X\subset \mathbb{P}(U)\) be a projective subvariety not intersecting
with a linear subspace \(\Lambda\coloneqq \mathbb{P}(W)\). Let \(\pi_{\Lambda}(X)\subset \mathbb{P}(U/W)\) 
denote the image of \(X\). Assume that \(\pi_{\Lambda}\colon X\to \pi_{\Lambda}(X)\) 
is an isomorphism. Then we have 
\[
    \pi_{\Lambda}(X)^{\perp} = X^{\perp} \cap \mathbb{P}(U/W)^{\vee}.
\]
By the reflexive theorem, \(\pi_{\Lambda}(X)\subset \mathbb{P}(U/W)\) 
is the dual variety of the linear section \(X^{\perp} \cap \mathbb{P}(U/W)^{\vee}\).
\end{theorem}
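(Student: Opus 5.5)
The plan is to compare the conormal varieties of \(X\) and \(X'\coloneqq\pi_{\Lambda}(X)\) directly. Hyperplanes in \(\mathbb{P}(U/W)\) correspond bijectively to hyperplanes of \(\mathbb{P}(U)\) containing \(\Lambda\), via \(H'\mapsto H=\overline{\pi_{\Lambda}^{-1}(H')}\). This is exactly the identification \(\mathbb{P}(U/W)^{\vee}=\Lambda^{\perp}\subset\mathbb{P}(U)^{\vee}\). So the statement amounts to the following: a hyperplane \(H\supset\Lambda\) lies in \(X^{\perp}\) if and only if \(H'\) lies in \(X'^{\perp}\).

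The first step is the smooth locus. Since \(\pi_{\Lambda}|_X\colon X\to X'\) is an isomorphism, \(x\in X_{\mathrm{sm}}\) if and only if \(\pi_{\Lambda}(x)\in X'_{\mathrm{sm}}\), and \(d\pi_{\Lambda}\) maps \(T_xX\) isomorphically onto \(T_{\pi(x)}X'\). On embedded tangent spaces this gives \(\hat T_{\pi(x)}X'=\pi_{\Lambda}(\hat T_xX)\), namely the image of the span \(\langle\Lambda,\hat T_xX\rangle\). Hence, for \(H\supset\Lambda\), we have \(H\supset\hat T_xX\) if and only if \(H'\supset\hat T_{\pi(x)}X'\). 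This identifies \(I^{\circ}_{X'}\) with \(I^{\circ}_X\cap(X\times\Lambda^{\perp})\) via \((x,H)\leftrightarrow(\pi(x),H')\). Taking closures and applying \(\mathrm{pr}_2\) immediately yields the inclusion \(X'^{\perp}\subset X^{\perp}\cap\Lambda^{\perp}\).

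The reverse inclusion is the main obstacle. A point of \(I_X\) over \(x\) with \(H\supset\Lambda\) may arise only as a limit of tangent hyperplanes \(H_k\) at smooth points \(x_k\), where \(H_k\not\supset\Lambda\). So one cannot simply intersect closures. To handle this, I would work with conormal sheaves rather than limits. The projection \(P^{\circ}=\mathbb{P}(U)\setminus\Lambda\to\mathbb{P}(U/W)\) is a smooth morphism (an affine bundle), and \(X\subset P^{\circ}\) is a section over \(X'\) of the cone \(C=\pi_{\Lambda}^{-1}(X')\). The conormal variety is intrinsic to the embedding: it is the closure in \(\mathbb{P}(T^*P^{\circ})\) of the conormal bundle of \(X_{\mathrm{sm}}\). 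The conormal variety of the cone \(C\) is the pullback of \(I_{X'}\), and it consists exactly of covectors vanishing on the fibres, i.e. hyperplanes containing \(\Lambda\).

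I would then show that \(I_X\cap(X\times\Lambda^{\perp})=I_C|_X\). Over \(X_{\mathrm{sm}}\) this holds by the first step. The key claim is that a covector at \(x\) which annihilates \(T_xC\) is a limit of conormal covectors of \(X\). This uses the fact that \(T_xC=T_xX\oplus T_x(\text{fibre})\) near smooth points and that the splitting extends, because \(X\to X'\) is an isomorphism: locally \(X\) is the graph of a morphism from \(X'\) to the fibre direction, and conormal varieties of graphs decompose compatibly. If this local graph argument proves awkward, the fallback is a dimension and irreducibility count: \(I_{X'}\) is irreducible of dimension \(\dim\mathbb{P}(U/W)-1\), and one would check that \(I_X\cap\mathrm{pr}_2^{-1}(\Lambda^{\perp})\) has no extra component mapping onto a larger set. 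Finally, the last sentence of the statement follows by applying the reflexivity theorem to \(X'=(X'^{\perp})^{\perp}\).
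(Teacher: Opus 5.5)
The paper gives no proof of this statement; it quotes Tevelev and applies the result only with $X=S$ smooth. Your first step is correct. Your diagnosis of the obstacle is also correct. But for singular $X$ the reverse inclusion is not actually established.

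What must be shown is $I_X\cap(X\times\Lambda^{\perp})\subseteq I_C|_X$. Concretely: a covector at a singular point $x$ that vanishes on the fibre of $\pi_\Lambda$ and is a limit of conormal covectors of $X$ must equal $\pi_\Lambda^*\eta$ for some $(\pi_\Lambda(x),\eta)\in I_{X'}$.

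Your ``key claim'' runs the other way. It says that a covector annihilating $T_xC$ is a limit of conormal covectors of $X$, which places something inside $I_X$. That is the inclusion you already obtained by taking closures. It is even weaker than $I_C|_X\subseteq I_X$, since the annihilator of the Zariski tangent space $T_xC$ is in general smaller than the fibre of $I_C$ over $x$.

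The fallback count cannot close the gap either. Components of $I_X\cap\mathrm{pr}_2^{-1}(\Lambda^{\perp})$ only have dimension at least $\dim\mathbb{P}(U/W)-1$, so a spurious component of exactly that dimension is not excluded by counting. Such components really appear once the immersion hypothesis fails. For example, project the twisted cubic from a point $p$ on a tangent line $\ell$. The pencil of planes through $\ell$ is then a line in $X^\perp\cap\Lambda^\perp$, in addition to the dual of the cuspidal cubic. So any argument must use the hypothesis locally at every point, which a count does not do.

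Your graph remark is the right mechanism, and once made precise it gives both inclusions at once.
\begin{enumerate}
\item Over an affine open $A\subset\mathbb{P}(U/W)$, trivialize $\pi_\Lambda^{-1}(A)\cong A\times\mathbb{A}^k$ (inside $\mathbb{P}(U)\setminus\Lambda$), with $\pi_\Lambda$ the first projection.
\item Then $X\cap\pi_\Lambda^{-1}(A)=\{(y,s(y)) : y\in X'\cap A\}$, and $s$ extends to a morphism $\tilde s\colon A\to\mathbb{A}^k$.
\item The automorphism $\Phi(y,z)=(y,z-\tilde s(y))$ commutes with $\pi_\Lambda$ and carries $X$ onto $X'\times\{0\}$.
\item It satisfies $(d\Phi)^*(\eta,\zeta)=(\eta-\zeta\circ d\tilde s,\zeta)$, so it fixes horizontal covectors $(\eta,0)$.
\item Conormal varieties are transported by ambient automorphisms. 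The conormal variety of $X'\times\{0\}$ is $\overline{N^*X'_{\mathrm{sm}}}\times(\mathbb{A}^k)^*$, whose horizontal part is $\overline{N^*X'_{\mathrm{sm}}}\times\{0\}$.
\item Transporting back gives $I_X\cap(X\times\Lambda^{\perp})\cong I_{X'}$ exactly. Applying the proper map $\mathrm{pr}_2$ then yields $X^\perp\cap\Lambda^\perp=X'^\perp$.
\end{enumerate}

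For the case the paper actually needs, your first step already suffices, though you did not say so. If $X$ is smooth, then $I_X=I_X^\circ$ and $I_{X'}=I_{X'}^\circ$ are closed. Hence $X^\perp\cap\Lambda^\perp=\mathrm{pr}_2\bigl(I_X\cap\mathrm{pr}_2^{-1}(\Lambda^\perp)\bigr)=X'^\perp$ follows directly from your identification of the open conormal sets.
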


\begin{remark}
    The above result is generalized by Liu and Zhang~\cite{Liu-Zhang} 
    regarding the relation bewteen the discriminant locus of a generic 
    projection on a projective variety and the corresponding dual varieties.
\end{remark}

\begin{corollary}
\label{cor:dual-hypsec-severi}
    Let \(S\subset \mathbb{P}^{N}\) be a Severi variety, and let \(V\) 
    be the section \(S\cap H\) for a generic hyperplane \(\mathbb{P}^{N-1}\cong H\subset \mathbb{P}^{N}\). 
    Then the dual variety \(\Sec(V)^{\perp}\subset \dualproj\) is isomorphic 
    to the image of the Severi variety \(S\subset (\mathbb{P}^{N})^{\vee}\) 
    via the projection \(\pi_H\colon (\mathbb{P}^{N})^{\vee}\dashrightarrow \dualproj\)
    where \(H\) denotes a point in \((\mathbb{P}^{N})^{\vee}\).
\end{corollary}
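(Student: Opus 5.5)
We plan to deduce the corollary from Theorem~\ref{thm:dual-hyper-sec} combined with the self-duality of the secant cubic, working in the dual picture. Write \(\mathbb{P}^N=\mathbb{P}(U)\), and let the hyperplane \(H\) correspond to a linear form, i.e.\ a point \([H]\in\mathbb{P}(U^\vee)=(\mathbb{P}^N)^\vee\). Then \(H=\mathbb{P}(\ker H)\). The dual projective space of \(H\) is \(\mathbb{P}((\ker H)^\vee)=\mathbb{P}(U^\vee/\langle H\rangle)\), which is exactly the target \(\dualproj\) of the linear projection \(\pi_H\colon (\mathbb{P}^N)^\vee\dashrightarrow\dualproj\) from the point \([H]\). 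We will apply Theorem~\ref{thm:dual-hyper-sec} with ambient space \(\mathbb{P}(U^\vee)\), with \(W=\langle H\rangle\) (so \(\Lambda=\{[H]\}\)), and with \(X=S^\vee\subset(\mathbb{P}^N)^\vee\), the Severi variety sitting in the dual space, which by Zak's self-duality satisfies \((S^\vee)^\perp=\Sec(S)\) and \(\Sec(S)^\perp=S^\vee\).

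There are two hypotheses to check. The first is \([H]\notin S^\vee\): a generic hyperplane is not tangent to \(\Sec(S)\), and the points of \(S^\vee=\Sec(S)^\perp\) are precisely the tangent hyperplanes, a proper closed subset. The second is that \(\pi_H|_{S^\vee}\) is an isomorphism onto its image. Projection from a point \(p\notin S^\vee\) is an isomorphism iff \(p\) lies on no chord or tangent line of \(S^\vee\), i.e.\ \(p\notin\Sec(S^\vee)\). Since \(\Sec(S^\vee)\) is a cubic hypersurface in \((\mathbb{P}^N)^\vee\), a generic \([H]\) avoids it. We regard this as the main point to verify, and it is exactly where the defining property of Severi varieties enters: \(\Sec\) is a proper hypersurface rather than all of \((\mathbb{P}^N)^\vee\). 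By contrast, a general point on a chord would be a non-isomorphic projection center.

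Granting these hypotheses, the theorem gives \(\pi_H(S^\vee)^\perp=(S^\vee)^\perp\cap\mathbb{P}(U^\vee/\langle H\rangle)^\vee\). Here \(\mathbb{P}(U^\vee/\langle H\rangle)^\vee\) is identified with the hyperplane \(\mathbb{P}(\ker H)=H\subset\mathbb{P}^N\), and \((S^\vee)^\perp=\Sec(S)\). Hence \(\pi_H(S^\vee)^\perp=\Sec(S)\cap H=\Sec(V)\) by Lemma~\ref{lem:sec-hyp}. Applying reflexivity (characteristic zero) yields \(\Sec(V)^\perp=\pi_H(S^\vee)\subset\dualproj\), which is isomorphic to \(S\). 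In the write-up, most of the care will go into the bookkeeping of the identifications \(\mathbb{P}(U/W)^\vee\cong\mathbb{P}(W)^\perp\), to make sure the linear section produced by the theorem is precisely \(H\) and not some other hyperplane.
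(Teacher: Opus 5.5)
Your proposal is correct and is essentially the paper's argument. You apply Tevelev's projection theorem (Theorem~\ref{thm:dual-hyper-sec}) in \((\mathbb{P}^N)^\vee\) to the Severi variety with center \([H]\), identify its dual with \(\Sec(S)\) by Zak's self-duality, cut by \(H\) using Lemma~\ref{lem:sec-hyp}, and conclude by reflexivity; the only difference is that you explicitly check the theorem's hypotheses (that \([H]\) lies off the dual Severi variety and off its secant cubic), which the paper leaves implicit.
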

\begin{proof}
    Replace \(\mathbb{P}(U)\) (resp. \(X\)) in Theorem~\ref{thm:dual-hyper-sec}
    by the dual space \((\mathbb{P}^{N})^{\vee}\) (resp. the Severi variety \(S\)). 
    Let \(\Lambda = [H]\in (\mathbb{P}^{N})^{\vee}\). 
    Then \(H\) naturally identifies to \(\mathbb{P}(U/W)^{\vee}\), and we have
    \[
        \pi_H(S)^{\perp}=S^{\perp} \cap H.
    \]
    Since \(S^{\perp}\cong \Sec(S)\subset \mathbb{P}^N\) (see~\cite[Theorem 2.9]{Zak93}), 
    the right-hand side is \(\Sec(V)\) by Lemma~\ref{lem:sec-hyp}. Using 
    the reflexive theorem, we obtain \(\Sec(V)^{\perp} = \pi_H(S)\subset \dualproj\).
\end{proof}

\begin{lemma}
\label{lem:sm-quad-bdl}
    Let \(S\subset \mathbb{P}^{N}\) be the Severi variety of dimension 
    \(d+1\), and let \(V = S\cap H\) be a generic hyperplane section. 
    Denote by \(Y\) the secant variety \(\Sec(V)\). The blow-up 
    \(\widetilde{Y}\) of \(Y\) along the singular locus \(V\) is 
    nonsingular. The exceptional divisor \(E_0\) is a smooth quadric 
    bundle over \(V\) of relative dimension \(\frac{d+1}{2}\).
\end{lemma}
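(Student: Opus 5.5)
The plan is to deduce everything from the corresponding facts for the Severi variety \(S\) itself, using Lemma~\ref{lem:sec-hyp} (\(Y=\Sec(V)=\Sec(S)\cap H\)) and a Bertini argument.

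Write \(\dim S=d+1=2a\), so \(N=3a+2\), \(\dim\Sec(S)=3a+1\), and \(a=\frac{d+1}{2}\). We use the following known geometry of Severi varieties (Zak, Ein--Shepherd-Barron). The blow-up \(\widetilde{\Sec(S)}=\mathrm{Bl}_S\Sec(S)\) is smooth. Concretely, it is the strict transform of \(\Sec(S)\) in \(\mathrm{Bl}_S\mathbb P^N\), and it is the exceptional divisor of the quadro-quadric Cremona transformation defined by the quadrics through \(S\). Its exceptional divisor \(E\) is a smooth quadric bundle over \(S\).

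The fibers of \(E\) can be described as follows. \(\Sec(S)\) has multiplicity \(2\) along \(S\). At \(x\in S\), its tangent cone is a cone with vertex \(\hat T_xS\) over a smooth quadric \(Q_x\subset\mathbb P(N_{S/\mathbb P^N,x})\cong\mathbb P^{a+1}\), so \(\dim Q_x=a\). The fiber \(E_x\) is \(Q_x\). (The secant locus of a general point is a smooth \(a\)-dimensional quadric, and by homogeneity under \(\mathrm{Aut}(S)\) the quadrics \(Q_x\) all have the same type.)

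Let \(\sigma\colon\widetilde{\Sec(S)}\to\Sec(S)\) be the blow-down and consider \(\sigma^*H\). The pullback of \(|\mathcal O(1)|\) to \(\widetilde{\Sec(S)}\) is base-point free. Hence by Bertini, for a generic hyperplane \(H\) the divisor \(\sigma^*H\) is smooth and meets \(E\) transversally. Since \(H\not\supset S\), the divisor \(\sigma^*H\) contains no component of \(E\). The part of \(\sigma^*H\) lying over \(H\setminus V\) is \(Y\setminus V\).

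Next we identify \(\sigma^*H\) with \(\widetilde Y=\mathrm{Bl}_VY\). Because \(H\) meets \(S\) transversally, the ideal of \(S\) restricts on \(H\) to the ideal of \(V\). Blow-ups commute with restriction to a Cartier divisor whose pullback is transverse to the center, and this gives \(\mathrm{Bl}_VY\cong\sigma^*H\) together with \(E_0=E|_V=E\cap\sigma^*H\).

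A quicker check uses the normal spaces. Since \(T_xH\supset T_xV\) and \(H\pitchfork S\), we have \(N_{V/H,x}\cong N_{S/\mathbb P^N,x}\). Thus the tangent cone of \(Y=\Sec(S)\cap H\) at \(x\in V\) is the cone with vertex \(\hat T_xV\) over the same quadric \(Q_x\subset\mathbb P(N_{V/H,x})\).

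It follows that \(\widetilde Y\) is smooth. Its exceptional divisor \(E_0\to V\) is the restriction of the smooth quadric bundle \(E\to S\) to \(V\), hence a smooth quadric bundle of relative dimension \(a=\frac{d+1}{2}\).

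The main obstacle is the input on \(S\): smoothness of \(\mathrm{Bl}_S\Sec(S)\), and the claim that every fiber of \(E\) is a smooth quadric. Rather than reprove these, I would cite the Severi-variety literature (Zak, and the quadro-quadric Cremona description). If needed, I would verify them directly from homogeneity: \(\mathrm{Aut}(S)\) acts transitively on \(S\), so it suffices to check one point, where the Jordan-algebra normal form exhibits the tangent cone explicitly.
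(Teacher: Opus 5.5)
Your argument is correct, and its second half is the same as the paper's. The paper writes \(\tau^*(Y)=\widetilde Y+rE'_0\) on \(\mathrm{Bl}_S\Sec(S)\), uses transversality of \(H\) and \(S\) to get \(r=0\), and concludes \(\mathrm{Bl}_VY=\mathrm{Bl}_S\Sec(S)\times_{\Sec(S)}Y\) and \(E_0=E'_0\times_S V\). It then cites the smooth quadric bundle structure of \(E'_0\to S\) from the earlier work on Severi varieties, which is the same input you plan to cite.

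The difference lies in how smoothness of \(\widetilde Y\) is obtained. The paper uses projective duality. It identifies \(\mathrm{Bl}_VY\) with the graph of the Gauss map of \(Y\), that is, the conormal variety of \(Y\). Via Corollary~\ref{cor:dual-hypsec-severi} (Tevelev's theorem on duals of projections) and reflexivity, this becomes the conormal variety of the smooth variety \(\pi_H(S)\cong S\), which is a projectivized normal bundle. You instead apply Bertini to the base-point-free pullback of \(|\mathcal O(1)|\) on the smooth variety \(\mathrm{Bl}_S\Sec(S)\), which gives smoothness of \(\sigma^*H\) and its transversality to \(E\) at once. You then identify the reduced divisor \(\sigma^*H\), which has no component inside \(E\), with the strict transform \(\mathrm{Bl}_VY\).

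Your route is more elementary. It also avoids the paper's unverified claim that, after restricting to \(H\), the blow-up along \(V\) is still the graph of the Gauss map; that claim needs the Jacobian ideal of \(F|_H\) to still define \(V\). The paper's route gives more structure: \(\widetilde Y\) is a projective bundle over a copy of the Severi variety. This is used at the end of the proof of Theorem~\ref{thm:main-limit-mhs} to see that \(\mathrm H^n(\widetilde X_0)\) is of Tate type.

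You can recover that structure on your side. \(\mathrm{Bl}_S\Sec(S)\) is a \(\mathbb P^{a+1}\)-bundle over the dual Severi variety \(S'\). The divisor \(\sigma^*H\) meets each fiber in a hyperplane as long as \([H]\) lies on no embedded tangent space of \(S'\). This holds for generic \(H\), because \(\mathrm{Tan}(S')=\Sec(S')\) is a hypersurface.
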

\begin{proof}
Let \(f\) be the defining equation of \(\Sec(S)\). The Jacobian ideal
of \(f\) is the ideal sheaf of the singular locus \(S\) in \(\Sec(S)\). 
Then the blow-up of \(\Sec(S)\) along \(S\) is the graph of the Gauss 
map~\eqref{eqs:Gauss-map}, which identifies to the conormal variety of 
\(\Sec(S)\). 

By restricting to the hyperplane \(H\), the blow-up of \(Y\) along 
\(V\) is again the graph of the Gauss map for \(Y\). By 
Corollary~\ref{cor:dual-hypsec-severi}, the dual variety of \(\Sec(V)\) 
is the image of \(S\) via the projection 
\(\pi_H\colon (\mathbb{P}^{N})^{\vee}\dashrightarrow \dualproj\).
For the Severi variety \(S\) we know \(S\cong \pi_H(S)\), so that 
\(\pi_H(S)\subset \dualproj\) is a nonsingular subvariety. The 
reflexive theorem shows that \(\widetilde{Y}\) is the conormal variety
of \(\pi_H(S)\subset \dualproj\). Hence, \(\widetilde{Y}\) is isomorphic 
to the projective normal bundle of \(\pi_H(S)\) to \(\dualproj\), which 
is a nonsingular variety.

Denote by \(X\) the secant variety \(\Sec(S)\). Let 
\(\tau\colon \mathrm{Bl}_{S} X\to X\) be the blow up along \(S\), and 
let \(E'_0\) be the exceptional divisor. In general there 
is 
\[
    \tau^*(Y)=\widetilde{Y}+rE'_0.
\]
The blow-up \(\widetilde{Y}\) identifies to the strict transform of \(Y\) 
because both are irreducible. Since \(E'_0\) is irreducible and dominant 
over \(S\), and the generic hyperplane \(H\) intersects \(S\) transversely, 
we assert that \(r=0\). Therefore
\[
    \mathrm{Bl}_{V} Y = \mathrm{Bl}_{S} X\times_{X} Y,
\]
and
\[
E_0= E'_0\times_{S} V
\]
It was proved in~\cite[Corollary 3.2]{Lyu-Zheng-secant} that \(E'_0\) is 
a smooth quadric bundle over \(S\) of relative dimension \(\frac{d+1}{2}\). 
Thus our assertion follows.
\end{proof}

\subsection{Degeneration of cubic hypersurfaces}

Let \(V\subset \mathbb{P}^{n+1}\) be a generic hyperplane section of
a Severi variety. Let \(X_{\infty}\subset \mathbb{P}^{n+1}\) be the 
generic smooth cubic hypersurface that transversely intersects the 
smooth locus of \(\Sec(V)\) and \(V\). Let 
\(p \colon \mathcal{X}\to \Delta\) be a one-parameter degeneration of 
\(n\)-dimensional cubic hypersurfaces defined by the pencil generated 
by \(X_{\infty}\) and the central fiber \(X_0\coloneqq \Sec(V)\):
\begin{equation}
\label{eqs:one-parameter-deg}
    \mathcal{X} = \{(x,t)\in \mathbb{P}^{n+1}\times \Delta~|~F(x)+tG(x)=0\}
\end{equation}
where \(G\) (resp. \(F\)) is the defining equation of \(X_{\infty}\) 
(resp. \(\Sec(V)\)). One can easily deduce from the assumption on 
\(X_{\infty}\) that the generic fiber \(X_t\) for \(t\in \Delta^*\) 
is smooth. But the total space \(\mathcal{X}\) is singular and the
central fiber \(X_0\) is not a SNC divisor. We provide an explicit 
semistable reduction of \(\mathcal{X}\to \Delta\), and compute the 
limit mixed Hodge structure associated with the semistable degeneration.

\begin{proposition}[semistable reduction]
\label{prop:semi-red}
    A semistable model \(\mathfrak{X}\to \Delta\) of the degeneration 
    \(p \colon \mathcal{X}\to \Delta\) can be obtained by the base 
    change \(t\to t^2\), then blowing up along the subvariety \(V\) 
    in the central fiber \(X_0\). The central fiber of the semistable 
    degeneration \(\pi \colon \mathfrak{X}\to \Delta\) is a simple 
    normal crossing divisor with two irreducible components 
    \(D_1\) and \(D_2\) where
    \begin{itemize}
        \item \(D_1\) is the blow-up of \(\Sec(V)\) along \(V\);
        \item \(D_2\) is a quadric fibration over \(V\) with the
        discriminant locus along the divisor \(B\coloneqq V\cap X_{\infty}\).
    \end{itemize}
    The intersection \(D_1\cap D_2\) is the exceptional divisor in 
    the blow up \(D_1\).
\end{proposition}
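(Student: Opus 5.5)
The plan is a local computation of the pencil near \(V\), followed by the standard analysis of a blow-up along a locus of multiplicity two. First I determine the singularities of the total space \(\mathcal{X}\). Away from \(V\), the central fiber \(X_0=\Sec(V)\) is smooth (Lemma~\ref{lem:sec-hyp}). Hence \(\mathcal{X}\) is smooth there and \(p\) is smooth along \(X_0\setminus V\), so nothing needs to be done away from \(V\). Along \(V\), \(F\) vanishes to order exactly two and, by Lemma~\ref{lem:sm-quad-bdl}, the normal cone of \(V\) in \(X_0\) is a smooth quadric bundle. So in local analytic coordinates \((y,z)\) near a point of \(V\), with \(y\) along \(V\) (\(\dim V=d\)) and \(z\) normal to \(V\) in \(\mathbb{P}^{n+1}\), I will write \(F=q_y(z)+O(|z|^3)\). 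Here \(q_y\) is a family of quadratic forms in \(r=n+1-d\) variables whose zero locus on \(\mathbb{P}^{r-1}\) is the smooth quadric fiber of \(E_0\). Transversality of \(X_\infty\) to \(V\) lets me choose \(y\) so that \(G=y_1\cdot(\text{unit})\) near points of \(B\), while \(G\) is a unit near \(V\setminus B\).

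After the base change \(t\mapsto t^2\), the family becomes \(\mathcal{X}'=\{F+t^2G=0\}\), which near \(V\) looks like \(q_y(z)+t^2 G(y)+\dots=0\). I then blow up \(\mathcal{X}'\) along \(V\subset X_0\), that is, along the ideal \((z,t)\). I will check smoothness and the normal crossing condition in the standard affine charts.

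In the chart \(z_i=t w_i\), the equation of the strict transform divides by \(t^2\) to become \(q_y(w)+G(y)+t(\cdots)=0\). The exceptional divisor \(D_2\) is \(\{t=0\}\) in this chart. So \(D_2\) is the fibration over \(V\) whose fiber over \(y\) is the affine quadric \(\{q_y(w)+G(y)=0\}\). Globally, \(D_2\subset\mathbb{P}(N_{V/\mathbb{P}^{n+1}}\oplus\mathcal{O})\) is cut out by the quadric \(q_y(w)+G(y)s^2=0\) in \(r+1\) homogeneous variables. This quadric is of full rank exactly when \(G(y)\neq 0\) and drops rank by one when \(G(y)=0\), which gives discriminant \(B\). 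Smoothness of \(D_2\) and of the total space in this chart comes from the gradient: \(\partial_w q_y\) is nonzero off \(w=0\), and at \(w=0\) on \(B\) the term \(\partial_{y_1}G\neq0\). At such points \(t\) is a local coordinate and the fiber is \(D_2\) with multiplicity one.

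In the charts \(t=z_iu\), the equation becomes \(q_y(\bar z)+u^2G+z_i(\cdots)=0\) with \(\bar z_i=1\), and the fiber is \(t=z_iu=0\). Here \(\{u=0\}\) is \(D_2\) and \(\{z_i=0\}\) is the strict transform \(D_1\) of \(X_0\), which is \(\mathrm{Bl}_VX_0\). The intersection \(D_1\cap D_2=\{u=z_i=0,\ q_y(\bar z)=0\}\) is the quadric bundle \(E_0\), that is, the exceptional divisor of \(D_1\) by Lemma~\ref{lem:sm-quad-bdl}. By smoothness of \(E_0\), \(\partial_{\bar z}q_y\neq0\) there. So the total space is smooth with local coordinates including \(z_i,u\), and \(\pi=z_iu\) is reduced normal crossing.

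Finally I need to confirm global facts. The total space and \(D_1\) are smooth by Lemma~\ref{lem:sm-quad-bdl} together with the local check above. Both components appear with multiplicity one, and the base change \(t\mapsto t^2\) is what achieves this, since without it \(D_2\) would appear with multiplicity two.

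The main obstacle is controlling the higher-order terms \(O(|z|^3)\) and the mixed terms in \(G\). They must not destroy smoothness in the charts, especially near \(B\cap\{w=0\}\). I will handle this by the weighted-degree argument: every such term acquires an extra factor of \(t\) or \(z_i\), hence vanishes on the exceptional divisor and does not affect the gradient there.
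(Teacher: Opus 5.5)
Your proposal is correct and takes the same route as the paper: base change \(t\mapsto t^2\), blow-up of \(V\times\{0\}\), and a local analysis using Lemma~\ref{lem:sm-quad-bdl} and transversality of \(X_\infty\). It in fact supplies the chart computation that the paper only sketches and defers to \cite{Lyu-Zheng-secant}, including the points over \(B\): there \(\mathcal{X}'\) is not transversally nodal, and smoothness comes instead from \(\partial_{y_1}G\neq 0\).

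One slip to fix: in the chart \(t=z_iu\) your labels are swapped. There \(\{z_i=0\}\) is the exceptional divisor \(D_2\), since the equation becomes \(q_y(\bar z)+u^2G=0\), consistent with \(D_2=\{t=0\}\) in your first chart. Likewise \(\{u=0\}\) is the strict transform \(D_1=\mathrm{Bl}_VX_0\). This does not affect the normal-crossing conclusion.
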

\begin{proof}
For the case of the secant variety of a Severi variety, the semistable 
reduction has been exhibited in the previous article~\cite{Lyu-Zheng-secant}. 
The proof for the case of hyperplane sections is almost the same. In 
the following, we illustrate the main ingredients for the semistable 
reduction, and refer to~\cite[Prop. 3.3]{Lyu-Zheng-secant} for 
explicit computation.

Suppose that \(f\) and \(g\) are local equations of the special hypersurface
\(X_0\) and the infinity hypersurface \(X_{\infty}\). The assumptions 
regarding the transversal intersection ensure that, under the base change 
\(t\mapsto t^2\), the total space \(\mathcal{X}'\) defined by \(f+t^2g=0\) 
has only nodal singularities along \(V\times \{0\}\). The blow-up of 
\(\mathcal{X}'\) along \(V\times \{0\}\) yields a smooth total space 
\(\mathfrak{X}\). 

Since \(\mathcal{X}'\) has double points along \(V\times \{0\}\), the 
exceptional divisor \(E\) in \(\mathfrak{X}\) is a quadric fibration 
over \(V\), which gives the component \(D_2\). The quadric fibration \(E\) 
itself is nonsingular, but the fibration \(f: E\to V\) is not smooth. 
By its local defining equation, the discriminant locus of \(f\) is the
divisor \(B = V\cap X_{\infty}\) cut out by the equation \(\tilde{g}\)
induced from \(g\). Another component \(D_1\) is the blow-up 
\(\widetilde{X}_0\) of the secant variety \(X_0\) along \(V\), and the 
intersection \(E\cap \widetilde{X}_0\) is the exceptional divisor \(E_0\) 
in the blow-up \(\widetilde{X}_0\). We have verified by Lemma~\ref{lem:sm-quad-bdl} 
that \(\widetilde{X}_0\) and \(E_0\) are nonsingular.
\end{proof}

In the main theorem, we determine the limit mixed Hodge structure 
\(\mathbb{V}^n_{\lim}\) for the given degeneration of cubic hypersurfaces. 
We set the \((d-1)\)-th vanishing cohomology of \(B\) to be
\begin{equation}
\label{eqs:van-coh}
    \mathrm{H}^{d-1}(B; \mathbb Q)_{\mathrm{van}}\coloneqq 
    \mathrm{Ker}(j\colon \mathrm{H}^{d-1}(B; \mathbb Q)\to \mathrm{H}^{d+1}(V; \mathbb Q)).
\end{equation}
The space \(\mathrm{H}^{d-1}(B; \mathbb Q)_{\mathrm{van}}\) carries 
the polarized Hodge sub-structure that inherits from the primitive 
cohomology \(\mathrm{H}^{d-1}(B; \mathbb Q)_{\mathrm{prim}}\) with 
respect to the K\"ahler class \(\omega=c_1(\mathcal{O}_D(B))\in \mathrm{H}^2(V;\mathbb{Q})\).

\begin{theorem}
\label{thm:main-limit-mhs}
Let \(V\) be a generic hyperplane section of a Severi variety in 
Subsection~\ref{sec:deg-cubic}. Suppose that \(d=\dim V > 1\). Let 
\(X_0=\Sec(V)\) be the cubic hypersurface of secant type of dimension
\(n=\frac{3}{2}(d+1)\).

Then the limit mixed Hodge structure \(\mathbb{V}^n_{\lim}\) associated 
with the degeneration of \(n\)-dimensional cubic hypersurfaces in~\eqref{eqs:one-parameter-deg}
is a pure Hodge structure of weight \(n\). More precisely, 
\(\mathbb{V}^n_{\lim}\) is isomorphic to
    \[
    \mathrm{H}^{d-1}(B; \mathbb{Q})_{\mathrm{van}}(-m)\oplus \mathrm{IH}^n(X_0; \mathbb{Q})_{v},
    \]
where \(m=\frac{d+5}{4}\) and 
\(\mathrm{IH}^n(X_0)_v\cong \mathbb{Q}(-\frac{n}{2})^{\oplus k}\) with 
\(k =\dim \mathrm{IH}^n(X_0;\mathbb{Q})_v\). 
\end{theorem}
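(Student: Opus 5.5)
We will use the semistable model \(\pi\colon\mathfrak X\to\Delta\) of Proposition~\ref{prop:semi-red}, with central fiber \(D_1\cup D_2\) and \(D_{12}=D_1\cap D_2=E_0\). We then run the monodromy weight spectral sequence of Theorem~\ref{thm:mono-wt-fil}. With two components only \(D(1)=D_1\sqcup D_2\) and \(D(2)=E_0\) occur, so the \(E_1\)-page near total degree \(n\) has three columns:
\[
E_1^{-1,n+1}=\mathrm H^{n-1}(E_0)(-1),\quad E_1^{0,n}=\mathrm H^n(D_1)\oplus\mathrm H^n(D_2),\quad E_1^{1,n}=\mathrm H^{n}(E_0).
\]
The differentials are the Gysin maps \(\gamma\) (signed difference of the pushforwards into \(D_1\) and \(D_2\)) and the restriction maps \(\rho\) (difference of the pullbacks). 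The spectral sequence degenerates at \(E_2\). Since \(\mathrm{Gr}^W_{n\pm1}\) are computed by \(E_2^{\pm1,\cdot}\), purity of weight \(n\) amounts to two statements: \(\gamma\colon\mathrm H^{n-1}(E_0)(-1)\to\mathrm H^n(D_1)\oplus\mathrm H^n(D_2)\) is injective, and \(\rho\colon\mathrm H^n(D_1)\oplus\mathrm H^n(D_2)\to\mathrm H^n(E_0)\) is surjective. Once these hold, we get \(\mathrm H^n_{\lim}=\ker\rho/\operatorname{im}\gamma\), and the remaining task is to identify this quotient.

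\emph{Step 1: cohomology of \(E_0\) and \(D_1\).} By Lemma~\ref{lem:sm-quad-bdl}, \(E_0\) is a smooth quadric bundle of odd relative dimension \(r=\frac{d+1}{2}\) over \(V\), so \(\mathrm H^*(E_0)=\bigoplus_{i=0}^{r}\mathrm H^{*-2i}(V)(-i)\) by Leray--Hirsch, with \(h\) the relative hyperplane class. A middle-class contribution appears only when \(r\) is even; here \(r\) is odd, but we should double-check the monodromy of the two rulings anyway. For \(D_1=\mathrm{Bl}_VX_0\) we use the decomposition theorem for the semismall resolution \(D_1\to X_0\). This gives \(\mathrm H^n(D_1)\cong\mathrm{IH}^n(X_0)\oplus\mathrm H^{d+1}(V)(-m+1)\)-type summands coming from the fibers over \(V\), where \(m-1=\frac{n-d-1}{2}\), consistent with \(m=\frac{d+5}{4}\). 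Furthermore, \(D_1\) is a projective bundle over the smooth variety \(\pi_H(S)\) (the conormal description in the proof of Lemma~\ref{lem:sm-quad-bdl}), so its cohomology is algebraic, generated by that of \(S\) and powers of a tautological class. Consequently \(\mathrm{IH}^n(X_0)\) is of Hodge–Tate type, which yields the last assertion \(\mathrm{IH}^n(X_0)_v\cong\mathbb Q(-\frac n2)^{\oplus k}\).

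\emph{Step 2: cohomology of \(D_2\).} \(D_2\) is a smooth quadric fibration \(q\colon D_2\to V\) of even fiber dimension \(r+1\), degenerating to corank-one quadrics along \(B\). By the decomposition theorem,
\[
Rq_*\mathbb Q_{D_2}\cong\bigoplus_{i}\mathbb Q_V[-2i](-i)\oplus \mathcal L[-(r+1)],
\]
where \(\mathcal L\) is the extension by zero, or more precisely \(j_*\) (intermediate extension) of the rank-one local system on \(V\setminus B\) given by the two rulings. Its cohomology equals the anti-invariant part of the double cover \(\widetilde V\to V\) branched along \(B\). Through the Gysin sequence for this double cover, the middle part \(\mathrm H^{d}\) of this local system cohomology contributes \(\mathrm H^{d-1}(B)_{\mathrm{van}}(-1)\) in the relevant degree, twisted by \(-(r+1)/2\)-type Tate twists, which add up to \(m\). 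This is the main obstacle: we must pin down the summand \(\mathrm H^*(V,j_*\mathcal L)\) in degree \(n\) and show it is exactly \(\mathrm H^{d-1}(B)_{\mathrm{van}}(-m)\). We would attempt this via the branched double cover and Lefschetz vanishing for the affine \(V\setminus B\).

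\emph{Step 3: the maps.} On the \(\mathbb Q_V\)-summands, the restrictions \(\mathrm H^n(D_2)\to\mathrm H^n(E_0)\) and \(\mathrm H^n(D_1)\to\mathrm H^n(E_0)\) are Leray–Hirsch compatible, since \(E_0\) is a hyperplane section of the quadric fibers and \(h\) pulls back to \(h\). This gives surjectivity of \(\rho\). Dually, \(\gamma\) is injective, by checking on the top-degree components of \(h\). The summands of \(\ker\rho\) coming from \(\mathrm H(V)\) are cancelled by \(\operatorname{im}\gamma\), except for the kernel of the map to \(\mathrm H^{d+1}(V)\). On the \(\mathcal L\)-summand, restriction to \(E_0\) factors through the \(\mathrm H^{d+1}(V)\)-part via \(i_*\). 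This is precisely why the \(B\)-contribution is cut down to \(\ker i_*\) and the \(X_0\)-contribution to \(\mathrm{IH}^n(X_0)_v\). Assembling these pieces gives \(\mathrm{Gr}^W_n=\mathrm H^{d-1}(B)_{\mathrm{van}}(-m)\oplus\mathrm{IH}^n(X_0)_v\) as polarized Hodge structures, and purity forces \(N=0\).
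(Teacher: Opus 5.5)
\textbf{The gap: the parity of $r$ is reversed.} Since $d\in\{3,7,15\}$, the relative dimension $r=\frac{d+1}{2}\in\{2,4,8\}$ of $g\colon E_0\to V$ is \emph{even}, and $D_2=E\to V$ has \emph{odd} fiber dimension $r+1$. You have both reversed, and this invalidates Steps~1 and~2.

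For $E_0$, Leray--Hirsch on powers of $h$ misses the second ruling class. One has $R^rg_*\mathbb{Q}\cong\mathbb{Q}_V\oplus\mathbb{L}$ with $\mathbb{L}\cong\mathbb{Q}_V$, since $V$ is simply connected. This extra summand $\IC_V^-$ is exactly what puts $\mathrm{H}^{d-1}(V)(-m)$ into $\mathrm{H}^{n-2}(E_0)(-1)$ and $\mathrm{H}^{d+1}(V)(-m+1)$ into $\mathrm{H}^n(E_0)$, with $m=\frac r2+1=\frac{d+5}{4}$; that is where $m$ comes from.

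For $E$, the smooth fibers are odd-dimensional quadrics with no primitive middle cohomology. So there is no ruling local system on $V\setminus B$ and no double cover branched along $B$. Such a cover need not even exist: $[B]=3h$ is not divisible by $2$ when, e.g., $\mathrm{Pic}(V)=\mathbb{Z}h$. Instead, the cone fibers over $B$ carry one extra class in degree $r+2$, giving a summand $\IC_B(L)$ supported on $B$, with $L$ trivial because $B$ is simply connected. Hence $\mathrm{H}^n(E)$ contains $\mathrm{H}^{d-1}(B)(-m)$ directly, and the ``main obstacle'' you describe does not arise.

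What is missing is the actual mechanism. The $\IC_V$-summands of $\tau_*\IC_{\widetilde X_0}$, $f_*\IC_E$ and $g_*\IC_{E_0}$ cancel perverse degree by perverse degree, because restriction and Gysin are isomorphisms there, with $\IC_V^+$ absorbing $\percoh^{\pm1}$ of the other two. The only surviving contributions come from the nonzero maps $\IC_V^-[-1]\to\IC_{X_0}\oplus\IC_B$ and their Verdier duals. So $\mathrm{Gr}^W_n$ is the middle cohomology of $\mathrm{H}^{d-1}(V)(-m)\xrightarrow{(i_*,j^*)}\mathrm{IH}^n(X_0)\oplus\mathrm{H}^{d-1}(B)(-m)\xrightarrow{i^*-j_*}\mathrm{H}^{d+1}(V)(-m+1)$. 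This splits as claimed because $i^*i_*=j_*j^*$ is the Hard Lefschetz isomorphism $\cup[B]$.

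\textbf{Two further points.} First, your $E_1$-bookkeeping mixes rows. The Gysin differential into $\mathrm{H}^n(D_1)\oplus\mathrm{H}^n(D_2)$ starts at $\mathrm{H}^{n-2}(E_0)(-1)$. The off-middle pieces are $\mathrm{Gr}^W_{n+1}=\ker\bigl(\mathrm{H}^{n-1}(E_0)(-1)\to\mathrm{H}^{n+1}(D_1)\oplus\mathrm{H}^{n+1}(D_2)\bigr)$ and $\mathrm{Gr}^W_{n-1}=\operatorname{coker}\bigl(\mathrm{H}^{n-1}(D_1)\oplus\mathrm{H}^{n-1}(D_2)\to\mathrm{H}^{n-1}(E_0)\bigr)$. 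So purity has nothing to do with surjectivity of $\rho$ in degree $n$. It is immediate from $\mathrm{H}^{n-1}(E_0)=0$: $n-1$ is odd, $V$ has no odd cohomology, and the fibers of $E_0\to V$ have only even cohomology.

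Second, $\tau\colon D_1\to X_0$ is not semismall: its fibers over $V$ are $r$-dimensional with $r\ge 2$, while $V$ has codimension $r+1$. That is why $\tau_*\IC_{\widetilde X_0}\cong\IC_{X_0}\oplus\bigoplus_{j=1}^{r}\IC_V[r+1-2j]$ has nonzero shifts.

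Your argument for the Hodge--Tate type of $\mathrm{IH}^n(X_0)_v$, via the projective bundle structure of $D_1$, is fine and matches the paper.
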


The proof uses the monodromy weight spectral sequence in Theorem~\ref{thm:mono-wt-fil}
for the logarithmic de Rham complex of the semistable degeneration.
For the explicit description of \(\mathbb{V}^n_{\lim}\), we need 
to compute the cohomology of the irreducible components of the 
SNC divisor. Thus the proof is postponed in the last section.

\section{Decomposition of quadric fibrations}
\label{sec:quadric-fibrations}
\subsection{Decomposition theorem of perverse sheaves}

Let \(X\) be a complex algebraic variety. We denote by \(D^b_c(X)\)
the bounded derived category of constructible complexes of 
\(\mathbb{Q}\)-vector spaces on \(X\). The \emph{support} and 
\emph{cosupport conditions} define two full subcategories 
\(\prescript{p}{} D^{\leq 0}\) and \(\prescript{p}{} D^{\geq 0}\) 
respectively. \((\prescript{p}{} D^{\leq 0},~ \prescript{p}{} D^{\geq 0})\) 
forms the perverse t-structure on \(D^b_c(X)\). A complex 
\(F^{\bullet}\in D^b_c(X)\) is called a perverse sheaf if 
\[
    F^{\bullet}\in \mathrm{Perv}(X)\coloneqq \prescript{p}{} D^{\leq 0}\cap \prescript{p}{} D^{\geq 0}.
\]
The heart \(\mathrm{Perv}(X)\) of the t-structure is an abelian 
category of perverse sheaves. There exist perverse trunction functors
\[
    \prescript{p}{}{\tau}_{\leq 0}\colon D^b_c(X)\to \prescript{p}{} D^{\leq 0},~ \prescript{p}{}{\tau}_{\geq 0}\colon D^b_c(X)\to \prescript{p}{} D^{\geq 0} 
\]
that are adjoint to the inclusion functors. The \(i\)-th perverse 
cohomology of a complex \(F^{\bullet}\in D^b_c(X)\) is defined as 
\[
    \percoh^i(F^{\bullet}) \coloneqq \prescript{p}{}{\tau}_{\leq 0}\prescript{p}{}{\tau}_{\geq 0}(F^{\bullet}[i]).
\]
In particular, if \(F^{\bullet}\) is perverse, we have
\(\percoh^i(F^{\bullet})=0, \forall i\neq 0\).

For a complex algebraic variety \(X\) of pure dimension \(n\), the 
\emph{intersection complex} \(\IC_X\) is a perverse sheaf on \(X\),
which is defined as the shifted Deligne's complex
\(\IC_{\bar{m}}^{\bullet}[-n]\) with respect to the middle perversity 
\(\bar{m}\) and a Whitney stratification on \(X\). In particular, 
\(\IC_X|_{U}\simeq \mathbb{Q}_{U}[n]\) where \(U\) is the 
nonsingular locus of \(X\). If \(L\) is a \(\mathbb{Q}\)-local system 
on \(U\), the twisted intersection complex 
\[\IC_X(L)\coloneqq \IC_{\bar{m}}^{\bullet}(L)[-n]
\]
is also a perverse sheaf on \(X\) such that \(\IC_X(L)|_U\simeq L[n]\).

The decomposition theorem by Beilinson, Bernstein, Deligne, and Gabber~\cite{BBDG82} 
gives the following statement: 
\begin{theorem}    
Let \(f\colon X\to Y\) be a proper morphism of complex algebraic varieties,
with \(X\) pure-dimensional. Then there is a non-canonical
isomorphism in \(D^b_c(Y;\mathbb Q)\)
\begin{equation}
\label{eqs:BBDG-decomp}
    Rf_*\IC_X\simeq \bigoplus_{i\in \mathbb{Z}} \percoh^i(Rf_*\IC_X)[-i].
\end{equation}
Moreover, each perverse cohomology sheaf
\(\percoh^i(Rf_*\IC_X)\) is semisimple. Equivalently,
there is a finite stratification \(Y=\bigsqcup S_\alpha\) by smooth locally
closed subvarieties and semisimple local systems \(L_{\alpha,\beta}\) on
\(S_\alpha\) such that
\[
\percoh^i(Rf_*\IC_X) \cong
\bigoplus_{\alpha,\beta}\IC_{\overline{S_\alpha}}(L_{\alpha,\beta}).
\]
\end{theorem}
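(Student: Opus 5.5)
This is the decomposition theorem of~\cite{BBDG82}, which the paper quotes rather than proves. If I had to prove it, I would follow the original route through weights over finite fields; the alternative is the Hodge-theoretic route of de Cataldo--Migliorini.

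The first step is spreading out. Everything in sight is algebraic and of finite type: \(f\colon X\to Y\), a Whitney stratification adapted to \(f\), and the local systems involved. So I can choose a finitely generated \(\mathbb{Z}\)-algebra \(R\subset\mathbb{C}\) over which all of these data are defined and over which the strata remain a stratification. For a sufficiently general closed point \(\operatorname{Spec}\mathbb{F}_q\to\operatorname{Spec}R\), I obtain a model \(f_0\colon X_0\to Y_0\) over \(\mathbb{F}_q\). The comparison theorems between classical and \(\ell\)-adic constructible sheaves (Artin) and generic base change then do three things:
\begin{itemize}
\item they identify \(\IC_X\otimes\overline{\mathbb{Q}}_\ell\) with the pull-back of \(\IC_{X_0}\);
\item they identify \(Rf_*\) with \(Rf_{0*}\);
\item they identify perverse cohomology sheaves on the two sides.
\end{itemize}
It therefore suffices to prove the statement for \(\overline{\mathbb{Q}}_\ell\)-sheaves on \(Y_0\otimes\overline{\mathbb{F}}_q\). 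I then descend to \(\mathbb{Q}\)-coefficients by a standard argument: the isomorphism classes of the summands are determined by their restrictions to strata, which are defined over \(\mathbb{Q}\).

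Second, I would establish purity. Gabber's purity theorem says that \(\IC_{X_0}\) is pure of weight \(\dim X\). Deligne's Weil~II says that \(Rf_{0*}\) preserves the property of being mixed of weight \(\le w\), and duality gives the same for weight \(\ge w\) because \(f_0\) is proper. Together these show that \(K=Rf_{0*}\IC_{X_0}\) is pure. Perverse truncation preserves purity, so each \(\percoh^i(K)\) is pure of weight \(\dim X+i\).

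Third, I would split \(K\) over \(\overline{\mathbb{F}}_q\). The key vanishing is that, for pure perverse sheaves \(P,Q\) of weights \(a>b\) (say \(a=\dim X+i\) and \(b=\dim X+j-1\) with \(i\ge j\)), the group \(\mathrm{Ext}^1_{\overline{\mathbb{F}}_q}(P,Q)\) has weights \(\ge a-b+1>0\). Its Frobenius-invariants vanish, and the Hochschild--Serre sequence then shows that the geometric extension classes gluing the perverse truncation triangles are zero. Inducting on the perverse amplitude gives \(K\simeq\bigoplus\percoh^i(K)[-i]\).

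Fourth, for semisimplicity I would show that a pure perverse sheaf becomes semisimple after extension to \(\overline{\mathbb{F}}_q\). First split it by weights. Then use that the restriction to the open stratum is a pure lisse sheaf, which is geometrically semisimple by Weil~II. Finally, the intermediate extension \(j_{!*}\) of each simple summand appears as a direct summand, again by the same \(\mathrm{Ext}^1\) weight vanishing. Writing each simple constituent as \(\IC_{\overline{S_\alpha}}(L_{\alpha,\beta})\) gives the stratified form of the theorem.

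I expect the main obstacle to be the purity input, namely Gabber's theorem together with the semisimplicity of pure lisse sheaves. I would treat both as black boxes from Weil~II rather than reprove them. If a characteristic-zero argument is preferred, I would instead follow de Cataldo--Migliorini. Reduce to \(f\) projective with \(X\) smooth via resolution and Chow's lemma. Prove the relative hard Lefschetz theorem for \(\percoh^{\pm i}(Rf_*\mathbb{Q}_X[\dim X])\) together with the Hodge--Riemann relations, by induction on \(\dim X\) using hyperplane sections. The splitting then follows from Deligne's Lefschetz criterion, and semisimplicity follows from the definiteness of the induced forms on the local refined intersection forms. In that route the hard step is the Hodge--Riemann relations.
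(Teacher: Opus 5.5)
The paper gives no proof of this statement; it quotes \cite{BBDG82}. Your outline is the proof of that reference: spreading out, Gabber's purity of $\IC$, Weil~II for the proper push-forward, purity of perverse cohomology, and removal of geometric gluing data by Frobenius weights. That architecture is right. However, the lemma you place at the centre of steps three and four is misstated, and as written it is false. The correct estimate is \cite[Prop.~5.1.15]{BBDG82}: if $K_0$ has weights $\le w$ and $L_0$ has weights $\ge w$, then the geometric group $\mathrm{Hom}(\bar K,\bar L[k])$ has weights $\ge k$. For pure perverse $P,Q$ of weights $a,b$, this says that $\mathrm{Ext}^k(\bar P,\bar Q)$ has weights $\ge k+b-a$. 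So $\mathrm{Ext}^1$ from a higher weight to a lower one can carry weight $\le 0$.

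For example, on $\mathbb{A}^1_{\mathbb{F}_q}$, let $z\colon\{0\}\hookrightarrow\mathbb{A}^1$ and $u\colon\mathbb{G}_m\hookrightarrow\mathbb{A}^1$. Take $P=\overline{\mathbb{Q}}_\ell[1]$ (weight $1$) and $Q=z_*\overline{\mathbb{Q}}_\ell$ (weight $0$). Then $\mathrm{Ext}^1(\bar P,\bar Q)\cong\mathrm{Hom}(\overline{\mathbb{Q}}_\ell,z_*\overline{\mathbb{Q}}_\ell)\cong\overline{\mathbb{Q}}_\ell$ has weight $0$, not weight $\ge 2$. Its Frobenius-invariant generator is the non-split extension $0\to z_*\overline{\mathbb{Q}}_\ell\to u_!\overline{\mathbb{Q}}_\ell[1]\to\overline{\mathbb{Q}}_\ell[1]\to 0$, which is exactly the kind of gluing that weights cannot kill.

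What the splitting step actually needs is the following. The connecting map $\delta\colon\percoh^i(K)[-i]\to{}^{p}\tau_{<i}K[1]$ runs between two complexes that are both pure of weight $w=\dim X$. Hence the geometric $\mathrm{Hom}^1$ containing $\bar\delta$ has weights $\ge 1$. Since $\bar\delta$ comes from $\delta_0$, it is Frobenius-invariant, and therefore zero. Filtering ${}^{p}\tau_{<i}K$ by perverse truncation, the relevant groups are $\mathrm{Ext}^{i-j+1}(\percoh^i(K),\percoh^j(K))$ for $j<i$. These have weights $\ge (i-j+1)+(w+j)-(w+i)=1$. In your parametrization ($b=\dim X+j-1$), this is $\mathrm{Ext}^{a-b+1}(P,Q)$ with weights $\ge 1$. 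You have interchanged the Ext-degree with the weight bound.

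The corrected lemma, in the equal-weight case $a=b$, $k=1$, is also what splits $j_{!*}$ off in your semisimplicity step. Your version assumes $a>b$ and so says nothing about that case.

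A smaller point concerns the descent from $\overline{\mathbb{Q}}_\ell$ to $\mathbb{Q}$. The reason you give is not the operative one, since simple $\overline{\mathbb{Q}}_\ell$-constituents need not be defined over $\mathbb{Q}$. The clean argument is this. The splitting is equivalent to the vanishing of the connecting morphisms of the perverse truncation triangles. Semisimplicity of a perverse sheaf is equivalent to the vanishing, in $\mathrm{Hom}(C,A[1])$, of the class of every short exact sequence $0\to A\to P\to C\to 0$. Both are vanishing statements in Hom-groups of constructible complexes, and these commute with extension of scalars: $\mathrm{Hom}(A,B)\otimes_{\mathbb{Q}}\overline{\mathbb{Q}}_\ell\cong\mathrm{Hom}(A\otimes\overline{\mathbb{Q}}_\ell,B\otimes\overline{\mathbb{Q}}_\ell)$. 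So both descend.

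Also, strictly speaking, Weil~II says that $Rf_!$ preserves weights $\le w$. It is properness, $Rf_*=Rf_!$, that transfers this to $Rf_*$, and duality then gives weights $\ge w$. With these repairs your outline is the standard proof, and the de Cataldo--Migliorini route you sketch is a legitimate alternative.
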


\begin{example}
\label{exam:decomp-smooth-proper}
Let \(f\colon X\to Y\) be a smooth and proper morphism of complex 
algebraic varieties with \(n =\dim X, m=\dim Y\). Then Deligne's 
relative Hard Lefschetz theorem yields the canonical decomposition
    \begin{equation}
    \label{eqs:deligne-decomp-thm}
        Rf_*\mathbb{Q}_X[n]\simeq \bigoplus_{i=m-n}^{n-m} R^{n-m+i}f_*\mathbb{Q}_{X}[m-i].
    \end{equation}
In view of the decomposition formula~\eqref{eqs:BBDG-decomp},
the \(i\)-perverse cohomology is 
    \[
       \percoh^i(Rf_*\mathbb{Q}_X[n]) = R^{n-m+i}f_*\mathbb{Q}_{X}[m] 
    \]

Suppose that \(f\colon X\to Y\) is a smooth family of quadrics of even
dimension \(r\). The rank two local system \(R^{r}f_*\mathbb{Q}\) 
contains a monodromy invariant global section \(\eta^{\frac{r}{2}}\),
where \(\eta\in \Gamma(Y, R^2f_*\mathbb{Q})\) is induced by any 
relative ample line bundle on \(X\). Then \(R^{r}f_*\mathbb{Q}\) admits
a decomposition of local systems
\begin{equation}
\label{eqs:anti-inv-loc-sys-quad}
R^{r}f_*\mathbb{Q}\cong \mathbb{Q}_Y\oplus \mathbb{L}
\end{equation}  
where \(\mathbb{Q}_Y\) is the constant local system generated by the
section \(\eta^{\frac{r}{2}}\). The monodromy group of \(\pi_1(Y, y)\) 
acts on \(\mathrm{H}^2(X_y; \mathbb{Q})\) as a finite group of order 
at most \(2\). 
\begin{itemize}
    \item If the order is \(2\), the involution \(\iota\) exchanges the 
two generators in \(\mathrm{H}^2(X_y; \mathbb{Q})\) represented by 
two maximal isotropic linear subspaces contained in \(X_y\). Then the 
local system \(\mathbb{L}\) is the anti-invariant part 
\(\mathrm{Im}(\mathrm{id}-\iota)\). 
    \item If the order is \(1\), e.g., \(Y\) is simply connected, the 
    local system \(\mathbb{L}\) is also monodromy invariant and is
    isomorphic to \(\mathbb{Q}_Y\).
\end{itemize}
For the later use, we denote by \(\IC_Y^{+}\) (resp. \(\IC_Y^{-}\)) 
the perverse sheaf \(\IC_Y=\mathbb{Q}_Y[m]\) (resp. \(\mathbb{L}[m]\)) 
in the decomposition of
\(\percoh^0(Rf_*\mathbb{Q}_X[n])=R^{r}f_*\mathbb{Q}[m]\).
\end{example}

\subsection{Decomposition formula for quadric fibrations}
\label{subsec:decomp-quad-fib}
In this subsection, we focus on the decomposition formulas regarding 
quadric fibrations. Let us fix the following setting: 

\begin{situation}
\label{setting:quad-fib}
Let \(r > 0\) be an even integer. Let \(X\) and \(V\) be smooth complex
projective varieties, and let \(f: X\to V\) be a quadric fibration of 
relative dimension \(r+1\). The discriminant locus of \(f\) is a smooth 
divisor \(B\subset V\) such that the fiber \(f^{-1}(s)\) for any 
\(s\in B\) is a quadric cone with a single nodal vertex.         
\end{situation}

\begin{definition}
\label{def:rel-var-coh}
Let \(p\colon P\to V\) be a projective bundle of relative dimension 
\(r+2\) such that \(X\) is embedded into \(P\) as a relative divisor
of the relative line bundle \(\mathcal{O}_P(2)\) 
(cf. \cite[Prop. 1.2]{Bea-quadric-prmy-77}). One defines the relative 
variable cohomology sheaf to be
    \begin{equation}
    \label{eqs:var-coh-grp}
        (R^{r+2}f_* \mathbb{Q})_{\textrm{var}}\coloneqq \textrm{Coker} (R^{r+2}p_* \mathbb{Q}\to R^{r+2}f_* \mathbb{Q}).
    \end{equation}    
\end{definition}

This sheaf appears in the decomposition formula of the quadric fibration 
\(f: X\to V\). For the fiber \(f^{-1}(s)\) over \(s\in V\setminus B\), 
the restriction map
    \[
    \mathrm{H}^{r+2}(\mathbb{P}^{r+2}, \mathbb{Q})\to \mathrm{H}^{r+2}(f^{-1}(s), \mathbb{Q})
    \]
is an isomorphism. Hence \((R^{r}f_* \mathbb{Q})_{\textrm{var}}\) is 
supported on the discriminant locus \(B\). We show that it is a local 
system on \(B\).
 
\begin{lemma}
\label{lem:var-coh-loc-sys}
The relative variable cohomology sheaf \((R^{r+2}f_* \mathbb{Q})_{\textrm{var}}\)
is a rank one local system \(\mathbb{L}\) on the discriminant locus \(B\).
\end{lemma}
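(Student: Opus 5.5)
We compute the stalks of \((R^{r+2}f_*\mathbb{Q})_{\mathrm{var}}\) fiberwise, then check local constancy along \(B\) by a local model.

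\emph{Step 1: stalks.} By proper base change, the stalk of \(R^{r+2}f_*\mathbb{Q}\) at \(s\in V\) is \(\mathrm{H}^{r+2}(f^{-1}(s);\mathbb{Q})\). The stalk of \(R^{r+2}p_*\mathbb{Q}\) is \(\mathrm{H}^{r+2}(\mathbb{P}^{r+2};\mathbb{Q})\), spanned by \(h^{(r+2)/2}\), where \(h\) is the hyperplane class. So the stalk of the cokernel is \(\mathrm{H}^{r+2}(f^{-1}(s))/\langle h^{(r+2)/2}\rangle\).

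For \(s\notin B\), the fiber \(f^{-1}(s)\) is a smooth quadric of odd dimension \(r+1\). Its cohomology in degree \(r+2\) is one-dimensional and generated by the restriction of \(h^{(r+2)/2}\), so the cokernel vanishes, as already remarked before the lemma.

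For \(s\in B\), the fiber \(Q_s\) is a cone with vertex a single point over a smooth quadric \(Q'\subset\mathbb{P}^{r+1}\) of even dimension \(r\). Write \(Q_s\setminus\{v\}\) as a line bundle over \(Q'\), where \(v\) is the vertex. The long exact sequence of the pair \((Q_s,\{v\})\), together with \(\mathrm{H}^*_c(Q_s\setminus v)\cong\mathrm{H}^{*-2}(Q')\) (Thom isomorphism), gives \(\mathrm{H}^{r+2}(Q_s)\cong\mathrm{H}^r(Q')\cong\mathbb{Q}^2\). The two generators are the classes of the two rulings, namely the cones over the two families of maximal isotropic subspaces of \(Q'\), which are \(\mathbb{P}^{(r+2)/2}\)'s in \(Q_s\). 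Their sum is \(h^{(r+2)/2}\) up to a scalar. Hence the stalk of the cokernel is one-dimensional at every point of \(B\), and the sheaf is supported on \(B\).

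\emph{Step 2: local constancy.} Restrict the map \(R^{r+2}p_*\mathbb{Q}\to R^{r+2}f_*\mathbb{Q}\) to \(B\). By proper base change, \(R^{r+2}f_*\mathbb{Q}|_B=R^{r+2}(f_B)_*\mathbb{Q}\), where \(f_B\colon X_B=f^{-1}(B)\to B\). I would show that \(f_B\) is locally topologically trivial over \(B\). Since each fiber has a unique nodal vertex, the vertices form a section \(\sigma\colon B\to X_B\). Projection from this section identifies \(X_B\setminus\sigma(B)\) with a line bundle over a smooth family of even-dimensional quadrics \(Q'_B\to B\), and the latter is a smooth proper morphism. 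Ehresmann's theorem then gives local triviality of \(f_B\), compatibly with the family of projective spaces. Alternatively, one can use the local equation \(x_0^2+\dots+x_{r+1}^2+\tilde g\,x_{r+2}^2\) from Proposition~\ref{prop:semi-red} and observe that along \(\tilde g=0\) the family is analytically a product. Either way, \(R^{r+2}(f_B)_*\mathbb{Q}\) and \(R^{r+2}p_*\mathbb{Q}|_B\) are local systems. Therefore their cokernel is a local system of rank one; it is the anti-invariant part for the exchange of rulings, as in Example~\ref{exam:decomp-smooth-proper}.

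\emph{Step 3: the cokernel on \(V\) is supported on \(B\).} Let \(i\colon B\hookrightarrow V\). Since the cokernel vanishes off \(B\) by Step 1, the sheaf equals \(i_*\) of its restriction to \(B\), that is, \(i_*\mathbb{L}\).

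The main obstacle I expect is Step 2: justifying that the family of singular fibers over \(B\) is topologically locally trivial. The key point is to extract a section of vertices and a smooth family of base quadrics from the hypothesis that the discriminant is a smooth divisor with corank-one fibers. I would do this first via the local normal form of a quadratic form of corank one, which is possible precisely because \(B\) is smooth and the corank is exactly one along \(B\).
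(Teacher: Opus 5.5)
Your proof is correct. It rests on the same computation as the paper's: removing the vertex section $\sigma(B)$ makes $X_B\setminus\sigma(B)$ a line bundle over the smooth family $h\colon Y\to B$ of even-dimensional base quadrics, so $\mathrm{H}^{r+2}$ of a cone is $\mathrm{H}^{r}$ of its base. The difference is in how you use it.

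The paper runs this argument at the level of sheaves on $B$. The triangle for the open complement of $\sigma(B)$, together with $R\alpha_!\mathbb{Q}_U\simeq\mathbb{Q}_Y[-2]$, gives $R^{r+2}f_*\mathbb{Q}|_B\cong R^rh_*\mathbb{Q}_Y$. The same construction for $P_B$ gives $R^{r+2}p_*\mathbb{Q}|_B\cong R^rk_*\mathbb{Q}_W$. Local constancy and the identification of the cokernel with the anti-invariant summand $\mathbb{L}$ then follow at once.

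You instead compute stalks fiber by fiber and get local constancy separately, from local triviality of $f_B$. Your normal-form version of that step is the cleanest argument available. Near a point of $B$, a holomorphic family of quadratic forms of corank exactly one can be put in the form $\sum_{i\le r+1}y_i^2+c(s)\,y_{r+2}^2$, which is constant along $\{c=0\}$. Only the corank condition is used here, not smoothness of $B$. The Ehresmann version needs a bit more care: for instance, trivialize the blow-up $\widetilde{X}_B$ together with its exceptional divisor over a small ball in $B$, then collapse the divisor.

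What your route does not give for free is the identification of the rank-one system with the anti-invariant part. You only assert it, but the paper relies on it afterwards: Theorem~\ref{prop:perv-decomp-quad-fib} identifies the summand $L$ with $\mathbb{L}$. To supply it, note two things. First, the fiberwise isomorphism $\mathrm{H}^{r+2}(Q_s)\cong\mathrm{H}^{r}(Q'_s)$ is natural, so it is compatible with your local trivializations and hence with monodromy. Second, in $R^rh_*\mathbb{Q}_Y\cong\mathbb{Q}_B\oplus\mathbb{L}$ the summand $\mathbb{Q}_B$ is spanned by $\eta^{r/2}$, which is the image of your hyperplane power $h^{(r+2)/2}$. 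So the quotient is $\mathbb{L}$.

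One small correction in Step~1. The cones over the maximal isotropic subspaces are $(r+2)$-cycles, so they are classes in $\mathrm{H}_{r+2}(Q_s)$, not generators of $\mathrm{H}^{r+2}(Q_s)$. What you actually need is only that the hyperplane power is nonzero in $\mathrm{H}^{r+2}(Q_s)$. It is: it pairs to $1$ with each cone, or equivalently it maps to a nonzero multiple of $\eta^{r/2}$ under the Thom isomorphism. That is what makes the cokernel one-dimensional.
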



\begin{proof}
It suffices to show that the restriction \(R^{r+2}f_* \mathbb{Q}|_B\) 
is a local system. Let \(X_B = f^{-1}(B)\) as the family of quadric 
cones over \(B\). It admits a section \(s\colon B\to X_B\) where 
\(s(t)\) is the unique singular vertex in \(f^{-1}(t)\). The blow-up 
\(\epsilon \colon \tilde{X}_B\to X_B\) along the section \(s(B)\) is 
a simultaneous resolution of all quadric cones along the vertex. Then 
\((f \circ \epsilon)^{-1}(t)\) is a \(\mathbb{P}^1\)-bundle over a 
\(r\)-dimensional smooth quadric. By contracting these \(\mathbb{P}^1\), 
we obtain a family \(h\colon Y\to B\) of \(r\)-dimensional smooth 
quadrics, which fits into the following diagram 
\begin{equation}
    \label{eqs:rel-quadric-proj}
        \begin{tikzcd}[column sep=small, row sep=small]
            & \tilde{X}_B \ar[ld, "\epsilon"'] \ar[rd, "\alpha"] & \\
            X_B \ar[rd, "f"']& &  Y \ar[ld, "h"]\\
            & B & .
        \end{tikzcd}
    \end{equation}
Since \(r\) is an even integer, \(R^rh_*\mathbb{Q}_Y\) is 
a rank two local system. We claim that 
\begin{equation}
\label{eqs:loc-sys-even-dim}
   R^{r+2}f_* \mathbb{Q}|_B\cong R^rh_*\mathbb{Q}_Y.
\end{equation}

    Let \(U\coloneqq X_B\setminus s(B)\) be the open complement of 
    the section \(s(B)\). Denote by \(j\colon U\hookrightarrow X_B\)
    the open embedding and \(f_{U}\colon U\to B\) the restricted 
    morphism. Consider the distinguished triangle of constructible 
    complexes
    \[
    j_! \mathbb{Q}_{U}\to \mathbb{Q}_{X_B}\to s_*\mathbb{Q}_{B}\overset{[1]}{\to}.
    \]
    in \(D^b_c(X_B)\). Applying the derived functor \(Rf_*\) yields 
    \[
        R{f_{U}}_!\mathbb{Q}_{U}\to Rf_*\mathbb{Q}_{X_B}\to \mathbb{Q}_{B}\overset{[1]}{\to}
    \]
    in \(D^b_c(B;\mathbb{Q})\), which implies
    \[
        R^{r+2}{f_{U}}_!\mathbb{Q}_{U}\simeq R^{r+2}f_*\mathbb{Q}_{X_B}.
    \]
    The transformation
    \(\alpha \circ \epsilon^{-1}\colon X_B\dashrightarrow Y\) can be 
    viewed as the relative projection along \(s(B)\). Then 
    \(U\cong \epsilon^{-1}(U)\) is isomorphic to the total space 
    associated with the relative line bundle \(\mathcal{O}_{Y/B}(1)\) 
    over \(B\), and the morphism \(f_{U}\) factors through the 
    \(\mathbb{A}^1\)-bundle map 
    \[
        \alpha \colon U\cong \epsilon^{-1}(U) \rightarrow Y.
    \]
    Since \(\alpha\) is an \(\mathbb{A}^1\)-bundle, we have the isomorphisms
    \[
    R\alpha_!\mathbb{Q}_{U} \simeq R^2\alpha_!\mathbb{Q}_{U}[-2]\simeq \mathbb{Q}_{Y}[-2].
    \]
    Applying the Grothendieck spectral sequence to the composed functor
    \(f_{U} = h \circ \alpha\) yields
    \[
        R^{r+2}{f_{U}}_!\mathbb{Q}_{U}\simeq R^{r}h_! R^2\alpha_!\mathbb{Q}\simeq R^{r}h_*\mathbb{Q}_{Y}.
    \]
    The equality \(Rh_!=Rh_*\) holds since \(h\) is a proper morphism.
    Therefore our claim follows. 
    
    Recall that \(R^rh_*\mathbb{Q}_Y\) has the following decomposition 
    of local systems (cf. \eqref{eqs:anti-inv-loc-sys-quad}):
    \[  
    R^rh_*\mathbb{Q}_Y\cong \mathbb{Q}_{B}\oplus \mathbb{L}.
    \]
    We claim that \((R^{r+2}f_* \mathbb{Q})_{\textrm{var}}\) is 
    isomorphic to the local system \(\mathbb{L}\).

    Let \(P\) be the projective bundle in Definition~\ref{def:rel-var-coh}.    
    Let \(P_B=p^{-1}(B)\). Blowing up the section \(s(B)\) and 
    contracting the family of \(\mathbb{P}^1\) similar to the 
    diagram~\eqref{eqs:rel-quadric-proj} yields a projective bundle 
    \(k\colon W\to B\) of relative dimension \(r+1\), which contains 
    the smooth quadric family \(Y\). Using the same argument in the 
    above paragraphs, we have the isomorphism
    \[
    R^{r+2}{p}_*\mathbb{Q}|_B \simeq R^{r+2}{p_U}_!\mathbb{Q}_{U}\simeq R^{r}k_*\mathbb{Q}_{W}.
    \]
    Here \(p\colon P_B\to B\) is the projective bundle map, \(U\)
    abusively denotes the open subset \(P_B\setminus s(B)\), and 
    the projection \(p_U\colon U\to B\) factors through \(k\colon W\to B\)
    by a \(\mathbb{A}^1\)-bundle map \(U\to W\).
    These isomorphisms fit into the following commutative diagrams
    \[
        \begin{tikzcd}[column sep=normal, row sep=small]
        R^{r+2}{p}_*\mathbb{Q}|_B \ar[d] & R^{r+2}{p_U}_*\mathbb{Q}\ar[l, "\sim"'] \ar[r, "\sim"] \ar[d] & R^{r}k_*\mathbb{Q} \ar[d]\\
        R^{r+2}{f}_*\mathbb{Q}|_B   & R^{r+2}{f_U}_*\mathbb{Q} \ar[l, "\sim"'] \ar[r, "\sim"] & R^{r}h_*\mathbb{Q}
        \end{tikzcd}
    \]
    where the cokernel of the leftmost vertical map is 
    \((R^{r+2}f_* \mathbb{Q})_{\textrm{var}}\). The image of the rightmost
    vertical map is exactly the sub-local system \(\mathbb{Q}_B\) in the
    decomposition~\eqref{eqs:loc-sys-even-dim}. Hence  
    \((R^{r+2}f_* \mathbb{Q})_{\textrm{var}}\) is isomorphic to \(\mathbb{L}\).
\end{proof}

\begin{theorem}
\label{prop:perv-decomp-quad-fib}
Let \(f\colon X\to V\) be the quadric fibration in Situation~\ref{setting:quad-fib}. 
We have the BBDG decomposition 
\begin{equation}
\label{eqs:perv-decomp-quad-fib}
    Rf_*\IC_{X}\simeq \bigoplus_{j=0}^{r+1} \IC_{V}[r+1-2j]
    \oplus \IC_B(L),
\end{equation}
where \(L\) is the local system in Lemma~\ref{lem:var-coh-loc-sys}.
\end{theorem}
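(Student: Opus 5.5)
We will prove the decomposition by combining the BBDG decomposition theorem with a stalk computation along the stratification \(V=(V\setminus B)\sqcup B\). Since \(X\) is smooth of dimension \(\dim V+r+1\), we have \(\IC_X=\mathbb{Q}_X[\dim X]\), and the decomposition theorem gives
\[
Rf_*\IC_X\simeq\bigoplus_i \percoh^i(Rf_*\IC_X)[-i],
\]
with each perverse cohomology sheaf semisimple. The only possible supports are \(V\) and closed subvarieties of \(B\): over \(V\setminus B\) the map is smooth, so the summands supported in smaller strata must lie over \(B\). Since all fibres over \(B\) are isomorphic quadric cones and \(f|_{X_B}\) is locally trivial in the analytic topology along \(B\) (a family of nodal quadric cones with a section of vertices), we may refine to the stratification \(\{V\setminus B, B\}\). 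Thus the only candidate summands are \(\IC_V(M)\) for local systems \(M\) on \(V\setminus B\), and \(\IC_B(N)\) for local systems \(N\) on \(B\).

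\textbf{Generic part.} Over \(V\setminus B\) the fibres are smooth quadrics of odd dimension \(r+1\), so \(R^{k}f_*\mathbb{Q}\) is \(\mathbb{Q}\) in even degrees \(0\le k\le 2r+2\) and zero otherwise. These are generated by powers of the relative hyperplane class \(\eta\) pulled back from \(P\), so they are constant. By Example~\ref{exam:decomp-smooth-proper}, the restriction to \(V\setminus B\) is \(\bigoplus_{j=0}^{r+1}\mathbb{Q}[\dim V][r+1-2j]\). The powers \(\eta^j\) are global classes on \(X\), so the relative Hard Lefschetz splitting extends over all of \(V\). This shows that the summands with full support are exactly \(\IC_V[r+1-2j]\) with trivial coefficients, because the intermediate extension of a constant sheaf on \(V\setminus B\) to the smooth variety \(V\) is \(\mathbb{Q}_V[\dim V]\).

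\textbf{Summands supported on \(B\).} We compare stalks at \(s\in B\). The stalk of \(\bigoplus_j\IC_V[r+1-2j]\) in degree \(k-\dim X\) is \(\mathbb{Q}\) for even \(0\le k\le 2r+2\). On the other hand, \(\mathrm{H}^k\) of a nodal quadric cone of dimension \(r+1\) agrees with \(\mathrm{H}^k(\mathbb{P}^{r+2})\) except in degree \(r+2\), where it has rank \(2\). This follows from the triangle argument in the proof of Lemma~\ref{lem:var-coh-loc-sys}: \(\mathrm{H}^k\) of the cone equals \(\mathrm{H}^{k-2}\) of the smooth \(r\)-dimensional base quadric for \(k\ge 2\). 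The extra rank-one piece in degree \(r+2\) is precisely \((R^{r+2}f_*\mathbb{Q})_{\mathrm{var}}\), which Lemma~\ref{lem:var-coh-loc-sys} identifies with the rank-one local system \(L\) on \(B\). Hence the cone of the map from the full-support summands to \(Rf_*\mathbb{Q}_X\) is \(L[-(r+2)]\) placed on \(B\). Since \(\dim X=\dim B+r+2\), this cone is \(L[\dim B]\) up to the shift \([\dim X]\), that is, it is \(\IC_B(L)\) in perverse degree \(0\).

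\textbf{Main obstacle.} The delicate step is to justify that the surplus stalk cohomology on \(B\) comes from a separate summand \(\IC_B(L)\) rather than being absorbed into the full-support summands. The intermediate extension of \(\mathbb{Q}_{V\setminus B}[\dim V]\) is just \(\mathbb{Q}_V[\dim V]\), whose stalks carry no extra rank in degree \(r+2\). Therefore, by the uniqueness of the decomposition into simple objects and the stalk count, the only way to account for the rank is an extra summand supported on \(B\). That summand is a perverse sheaf on \(B\) whose restriction is a shifted local system concentrated in a single degree, so it must be \(\IC_B(L)\) in perverse degree \(0\). Matching perverse degrees requires \(\dim X-\dim B=r+2\), which holds because \(B\) is a divisor in \(V\).

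To make the splitting explicit, we would also check that the map \(R^{r+2}f_*\mathbb{Q}\to(R^{r+2}f_*\mathbb{Q})_{\mathrm{var}}\) is the projection onto this summand. Semisimplicity from BBDG, together with the cokernel description of Definition~\ref{def:rel-var-coh}, gives this directly.
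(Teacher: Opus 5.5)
Your proposal is correct and follows essentially the paper's argument: the decomposition theorem, the full-support summands forced to be the constant $\IC_V[r+1-2j]$ by the smooth odd-dimensional quadric bundle over $V\setminus B$, and a stalk count on the nodal cones that leaves one rank-one piece in degree $r+2$, identified with $(R^{r+2}f_*\mathbb{Q})_{\mathrm{var}}$ via the restriction from $P$. The only real variation is that you rule out shifted or smaller-support summands over $B$ by the local triviality of the cone family along $B$, whereas the paper uses the Verdier self-duality of $Rf_*\IC_X$; separately, there is a minor slip in your stalk computation: away from degree $r+2$ the cone's cohomology matches $\mathrm{H}^k(\mathbb{P}^{r+1})$, not $\mathrm{H}^k(\mathbb{P}^{r+2})$.
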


\begin{proof}
    Let \(U\coloneqq V\setminus B\) be the open subset on which the 
    restricted morphism \(f_U\colon X_U\to U\) is a smooth quadric 
    bundle of relative dimension \(r+1\). By Example~\ref{exam:decomp-smooth-proper}
    we have
    \[
        R{f_U}_*\mathbb{Q}_{X_U}[n]\cong  
        \bigoplus_{\substack{i\in [-r-1, r+1]\\ i~ \textrm{odd}}}R^{i+r+1}{f_U}_*\mathbb{Q}_{X_U}[\dim S-i].
    \]
    The intersection complex \(\IC_V=\mathbb{Q}_V[\dim V]\) is the 
    unique extension of the perverse sheaf 
    \(R^{i+r+1}{f_U}_*\mathbb{Q}_{X_U}[\dim V]=\mathbb{Q}_U[\dim V]\). 
    Hence the decomposition of \(Rf_*\IC_{X}\) has the form
    \begin{equation}
    \label{eqs:decomp-quad-fib}
        \bigoplus_{\substack{i\in [-r-1, r+1]\\ i~ \textrm{odd}}} \IC_{V}[-i]\oplus \bigoplus_{\alpha, j} \IC_{\overline{S_\alpha}}(L_{\alpha, j})[-j],
    \end{equation}
    where the stratum \(S_{\alpha}\) is supported on the discriminant 
    locus \(B\), and \(L_{\alpha, j}\) is certain \(\mathbb{Q}\)-local 
    systems on \(S_{\alpha}\).

    We claim that \(j=0\) and \(\dim S_{\alpha}=\dim B\). Since \(f\) is 
    a projective morphism, we have the isomorphism
    \[
        \mathcal{H}^k_{s}(Rf_*\mathbb{Q}_{X}[n])\simeq \mathrm{H}^{k+n}(f^{-1}(s), \mathbb{Q})
    \]
    of \(k\)-th cohomology stalk at any \(s\in B\). Note that \(f^{-1}(s)\)
    is a quadric cone with a single vertex. Then 
    \(\dim \mathrm{H}^{k+n}(f^{-1}(s), \mathbb{Q})=1\) if and only if 
    \(k+n\) is an even integer in \([0, 2r+2]\) and \(k+n \neq r+2\). 
    For every such \(k\), we have an odd integer \(i = k+n-r-1\in [-r-1, r+1]\)
    such that \(\IC_V[-i]\) is a direct summand in the 
    decomposition~\eqref{eqs:decomp-quad-fib} and 
    \(\mathcal{H}^{k}_s(\IC_V[-i])\simeq \mathbb{Q}\). Then the 
    direct summand \(\IC_{\overline{S_\alpha}}(L_{\alpha, j})[-j]\) 
    admits possibly non-vanishing \(k\)-th cohomology stalk at \(s\in B\) 
    only if \(k = r+2-n = -\dim B\). Note that
    \[
        \mathcal{H}^k_s(\IC_{\overline{S_\alpha}}(L_{\alpha, j})[-j])=(L_{\alpha, j})_s
    \] 
    holds for \(k+\dim S_{\alpha}-j=0\) and \(s\in S_{\alpha}\subset B\). 
    It follows that \(\dim S_{\alpha}-j=\dim B\).

    It is known that \(Rf_*\IC_{X}\) is Verdier dual to itself and 
    \(B_S(\IC_{S}[-i]) \simeq \IC_{S}[i]\). Hence the direct sum 
    \(\bigoplus \IC_{\overline{S_\alpha}}(L_{\alpha, j})[-j]\) is 
    self dual, thus implies that the Verdier dual
    \[
        B_S(\IC_{\overline{S_\alpha}}(L_{\alpha, j})[-j])\simeq \IC_{\overline{S_\alpha}}(L^{\vee}_{\beta, j})[j]
    \]
    is also a direct summand that appears among 
    \(\IC_{\overline{S_\alpha}}(L_{\alpha, j})[-j]\). By the argument 
    in the previous paragraph, we can assert that 
    \(\dim S_{\alpha}+j=\dim B\). Therefore \(j=0\) and 
    \(\dim S_{\alpha}=\dim B\) hold true. Since \(B\) is irreducible 
    and pure-dimensional, the stratum \(S_{\alpha} = B\) is unique, 
    and the intersection complex \(\IC_B(L)\) is the perverse sheaf 
    \(L[\dim B]\) for certain local system \(L\) on \(B\). As perverse 
    cohomology sheaves, we have
    \[
    \begin{split}
        \percoh^i f_*\IC_{X} & = \IC_{V}, \quad \forall ~\textrm{odd}~ i\in [-r-1,r+1],\\
        \percoh^0 f_*\IC_{X} & = \IC_B(L)  = L[\dim B].
    \end{split}
    \]
    Re-indexing \(i\) by \(r+1-2j\), we obtain the decomposition 
    formula~\eqref{eqs:perv-decomp-quad-fib}.

    It remains to describe the local system \(L\). Let \(p\colon P\to S\) 
    be the projective bundle in Definition~\ref{def:rel-var-coh}. The
    restriction homomorphism
    \[
        Rp_*\mathrm{IC}_{P}\to Rf_* \mathrm{IC}_{X}[1]
    \]
    induces the homomorphism of perverse sheaves
    \[
    \percoh^{0}(p_*\mathrm{IC}_{P})\to \percoh^{1}(f_*\mathrm{IC}_{X}).
    \]
    Note that 
    \begin{align*}
    R^{r+2}p_* \mathbb{Q}[\dim S]&\simeq \percoh^{0}(p_*\mathrm{IC}_{P});\\
    R^{r+2}f_*\mathbb{Q}[\dim S] &\simeq \percoh^{1}(f_*\mathrm{IC}_{X})\oplus L[\dim S].
    \end{align*}
    Hence \(\percoh^{1}(f_* \mathrm{IC}_{X})\) is, up to a shift,  
    the image of the natural restriction map
    \[
    R^{r+2}p_* \mathbb{Q}\to R^{r+2}f_* \mathbb{Q}.
    \]
    Then \(L = (R^{r+2}f_* \mathbb{Q})_{\textrm{var}}\) is isomorphic 
    to the local system \(\mathbb{L}\) in~\eqref{eqs:anti-inv-loc-sys-quad}
    by Lemma~\ref{lem:var-coh-loc-sys}.
\end{proof}

\begin{proposition}
\label{prop:decomp-blow-up}
	Let \(Y\) be an \(n\)-dimensional complex algebraic variety. Assume
	that \(Z\subset Y\) is a smooth subvariety of codimension \(c+1\)
    such that \(Y\) has ordinary double points along \(Z\) and the 
    complement \(U\coloneqq Y\setminus Z\) is the smooth locus. Let 
	\(\tau\colon \widetilde{Y}\to Y\) be the blow-up of \(Y\) along \(Z\). 
	Then we have
	\begin{equation}
    \label{eqs:decomp-blow-up}
		\tau_*\IC_{\widetilde{Y}}\cong \IC_Y\oplus \bigoplus_{j=1}^{c} \IC_Z[c+1-2j]. 
	\end{equation}
\end{proposition}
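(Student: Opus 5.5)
The plan is to apply the decomposition theorem to the proper birational map \(\tau\) and identify the summands supported on \(Z\) by a stalk computation along the fibers of \(\tau\).

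\textbf{Local structure.} Since \(Y\) has ordinary double points transversally along \(Z\), étale (or analytically) locally near a point of \(Z\) we have \(Y\cong Z\times C\). Here \(C\) is the cone over a smooth quadric \(Q\subset\mathbb{P}^{c}\) of dimension \(c-1\), and \(C\) has an isolated singular point of dimension \(c\). Thus \(\widetilde Y\) is smooth and the exceptional divisor \(E\to Z\) is a smooth quadric bundle with fiber \(Q\). Since \(\IC_{\widetilde Y}=\mathbb{Q}_{\widetilde Y}[n]\), the decomposition theorem and the fact that \(\tau\) is an isomorphism over \(U\) give
\[
\tau_*\IC_{\widetilde Y}\cong \IC_Y\oplus \bigoplus_{\alpha,j}\IC_{\overline{S_\alpha}}(L_{\alpha,j})[-j],
\]
with all extra strata \(S_\alpha\) contained in \(Z\).

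\textbf{Stalk comparison.} For \(z\in Z\), proper base change gives \(\mathcal{H}^k_z(\tau_*\IC_{\widetilde Y})=\mathrm{H}^{k+n}(Q;\mathbb{Q})\). On the other hand, \(\IC_Y\) is locally \(\mathbb{Q}_Z[\dim Z]\boxtimes \IC_C\). The stalk of \(\IC_C\) at the vertex is the truncation \(\tau_{<0}\) of the link cohomology. The link is a circle bundle over \(Q\), and its low-degree cohomology equals \(\mathrm{H}^{i}(Q)\) modulo the image of the hyperplane class. Concretely, \(\mathcal{H}^{k}_z(\IC_Y)=\mathrm{H}^{k+n}(Q)_{\mathrm{prim}}\) in degrees \(k+n<c\), and it vanishes otherwise.

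So the difference between the two stalks is one copy of \(\mathbb{Q}\) in each even degree \(k+n=2i\), for \(0\le i\le c-1\). This is exactly the contribution of the powers \(h^i\) of the hyperplane class together with, when \(c-1\) is even, the extra middle class. In the middle degree \(2i=c-1\), the primitive (variable) part of \(\mathrm{H}^{c-1}(Q)\) is absorbed into \(\IC_Y\) as the part in degree \(<c\). I need to check this carefully.

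\textbf{Matching the summands.} Because \(\mathcal{H}^{-\dim Z+j}(\IC_Z[-j])\) is the only nonzero stalk of \(\IC_Z[-j]\), summands \(\IC_Z[c+1-2j]\) with \(j=1,\dots,c\) produce one \(\mathbb{Q}\) in each degree \(k+n=2j-2\), that is for \(i=j-1\in[0,c-1]\). The local systems are trivial, since the classes \(h^i\) come from a global relatively ample class. Strata \(S_\alpha\) of smaller dimension can be excluded as in the proof of Theorem~\ref{prop:perv-decomp-quad-fib}: support conditions plus Verdier self-duality force \(\dim S_\alpha\pm j\) to match, and the dimension count leaves no room.

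\textbf{Main obstacle.} The delicate step is the middle degree when \(c-1\) is even. Here \(\mathrm{H}^{c-1}(Q)\) has rank two, and I must verify that exactly one copy goes to \(\IC_Z\) while the anti-invariant part lies in \(\IC_Y\). I would argue this via the intersection form on the fiber: only classes pairing trivially with \(h\) survive in the link cohomology below degree \(c\), so the primitive class contributes to \(\IC_Y\) and no twisted local system \(\IC_Z(\mathbb{L})\) appears. As a consistency check, self-duality of the resulting list under \(j\mapsto c+1-j\) holds.
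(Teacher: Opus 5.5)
Your route is viable and differs from the paper's: you compute the stalks of \(\IC_Y\) along \(Z\) from the link and subtract them from \(\mathrm{H}^*(E_z)\). As written, however, the degree bookkeeping is wrong in three places, and the final matching only works because two of the errors cancel.

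\emph{First}, \(Z\) has codimension \(c+1\), so the normal slice \(C\) has dimension \(c+1\). Hence \(Q=E_z\subset\mathbb{P}^{c+1}\) is a smooth quadric of dimension \(c\), not \(c-1\); the paper's \(g\colon E\to Z\) has relative dimension \(c\). Accordingly the truncation range is \(k+n\le c\), and the delicate middle degree is \(c\) (for \(c\) even), not \(c-1\).

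\emph{Second}, write \(\Sigma\) for the link. The Gysin sequence of the circle bundle \(\Sigma\to Q\), together with hard Lefschetz on \(Q\), gives \(\mathrm{H}^i(\Sigma)\cong \mathrm{H}^i(Q)/h\,\mathrm{H}^{i-2}(Q)\) for \(i\le c\). This is \(\mathbb{Q}\) for \(i=0\), \(\mathbb{Q}\) for \(i=c\) when \(c\) is even, and \(0\) otherwise. So degree \(0\) lies in \(\IC_Y\); this already follows from your own formula, since \(\mathrm{H}^0(Q)_{\mathrm{prim}}=\mathrm{H}^0(Q)\). The discrepancy is therefore one \(\mathbb{Q}\) in each degree \(k+n=2i\) with \(1\le i\le c\), not \(0\le i\le c-1\).

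\emph{Third}, \(\IC_Z[c+1-2j]=\mathbb{Q}_Z[n-2j]\) has its only stalk in degree \(k+n=2j\), not \(2j-2\).

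With these corrections the matching \(i=j\), \(1\le j\le c\), is exact. Your ``main obstacle'' also disappears. The complementary summand \(K\) vanishes on \(U\), while \(\IC_Y\to Rj_*\mathbb{Q}_U[n]\) is an isomorphism on \(\mathcal{H}^k\) for \(k\le c-n\). Hence, for \(k+n\le c\),
\[
\mathcal{H}^k(K)|_Z=\ker\bigl(R^{k+n}g_*\mathbb{Q}_E\to \mathcal{H}^{k}(Rj_*\mathbb{Q}_U[n])|_Z\bigr)=h\cdot R^{k+n-2}g_*\mathbb{Q}_E .
\]
In degree \(c\) this is the invariant line spanned by \(h^{c/2}\), and the anti-invariant part goes to \(\IC_Y\); no intersection-form argument is needed. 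Moreover, every \(\mathcal{H}^k(K)|_Z\) is a direct summand of the local system \(R^{k+n}g_*\mathbb{Q}_E\). So each simple summand of \(K\) has locally constant cohomology sheaves on \(Z\), which excludes smaller strata without any appeal to duality.

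The paper's proof never computes the link. It uses only that \(\mathcal{H}^k(\IC_Y)|_Z=0\) for \(k>c-n\). A summand \(\IC_Z[s]\) with \(s\le 0\) has its stalk in degree \(k+n=c+1-s\ge c+1\), where all of \(\mathrm{H}^{k+n}(E_z)\) must come from \(Z\)-summands. This pins down the non-positively shifted summands, and Verdier self-duality of \(\tau_*\IC_{\widetilde Y}\) supplies the positively shifted ones. In that argument the middle degree causes no trouble: there is exactly one \(\IC_Z[1]\) because there is exactly one \(\IC_Z[-1]\). However, identifying \(\percoh^{-1}(\tau_*\IC_{\widetilde Y})\) with \(\IC_Z^{+}\) then needs the separate relative hard Lefschetz argument of Remark~\ref{rmk:stalk-inter-cplx}. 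Your corrected route costs a Gysin computation, but it determines all summands directly and yields that identification as a by-product.
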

\begin{proof}
    Let \(E\) be the exceptional divisor in the blow-up \(\widetilde{Y}\).
    Since \(Y\) has ordinary double points along \(Z\), the projection
    map \(g\colon E\to Z\) is a smooth quadric bundle of relative dimension \(c\).

    We have \(\tau_*\IC_{\widetilde{Y}}|_U = \IC_U\). Since \(\IC_U\)
    uniquely extends to \(\IC_Y\), thus \(\tau_*\IC_{\widetilde{Y}}\) 
    contains \(\IC_Y\) as a direct summand. The rest of summands are 
    supported in the singular locus \(Z\). We may write the decomposition 
    of \(\tau_*\IC_{\widetilde{Y}}\) as follows
    \begin{equation}
    \label{eqs:decomp-perv-blow-up}
    \tau_*\IC_{\widetilde{Y}} \simeq \IC_Y\oplus \bigoplus_{\alpha, k} 
    \IC_{\overline{Z_\alpha}}(L_{\alpha,k})[d_{\alpha,k}]	
    \end{equation}
    where \(Z_{\alpha}\) are strata contained in \(Z\), \(L_{\alpha,k}\) 
    are local systems on \(Z_{\alpha}\) and \(d_{\alpha,k}\) are integers.

    By Deligne's construction of intersection cohomology complex, we have
    \[
    	\IC_Y \cong \tau_{\leq c-n}(Rj_*\mathbb{Q}_U[n])
    \]
    where \(j\colon U\hookrightarrow Y\) is the open immersion. It 
    follows that
    \[
    	\mathcal{H}^{i}_x(\IC_X)=0, \quad \forall ~i> c-n.
    \]
    Hence the stalks \(\mathcal{H}^i_z(\tau_*\IC_{\widetilde{Y}})\) 
    for any \(z\in Z\) and \(i+n \geq c+1\) are contributed by the 
    summands \(\IC_{\overline{Z_\alpha}}(L_{\alpha,k})[d_{\alpha,k}]\). 
    Note that 
    \[
    \mathcal{H}^i_z(\tau_*\IC_{\widetilde{Y}})\cong \mathrm{H}^{i+n}(E_z)
    \] 
    where \(z\in Z\) and \(E_z\) is the fiber of \(g\colon E\to Z\). 
    For each \(i\) such that \(i+n\) is even and \(c < i+n \leq 2c\),
    the sheaf \(R^{i+n}g_*\mathbb{Q}_{E}\) corresponds to one local
    system \(L_{\alpha, k}\) with \(Z_{\alpha}=Z\) and 
    \(d_{\alpha, k}=c+1-(i+n)\). Write \(2j = i+n\in [c+1, 2c]\). 
    The decomposition~\eqref{eqs:decomp-perv-blow-up} contains the 
    direct sum
    \[
    \bigoplus_{c+1-2j\in [-c+1,0] } \IC_Z[c+1-2j].
    \] 
    By counting dimensions of stalks, one sees that this direct 
    sum has exhausted all direct summands with non-positive shifting 
    in~\eqref{eqs:decomp-perv-blow-up}.
    
    Since \(\tau_*\IC_{\widetilde{Y}}\) is Verdier dual to itself,
    the Verdier dual 
    \[
    \IC_Z[c+1-2j]^{\vee}=\IC_Z[-c-1+2j],\quad -c-1+2j \in [-1, c-1]
    \] 
    is also contained in \(\tau_*\IC_{\widetilde{Y}}\). There is no 
    other positive shifted perverse sheaf in the decomposition, 
    otherwise, the Verdier dual yields extra negative shifted term. 
    By re-indexing \(j\), we obtain the direct sum
    \begin{equation}
    \label{eqs:direct-sum-center}
    	\bigoplus_{j\in [1,c]} \IC_Z[c+1-2j]
    \end{equation}
    that complements \(\IC_Y\) in the decomposition~\eqref{eqs:decomp-perv-blow-up}.
\end{proof}
\begin{remark}
\label{rmk:stalk-inter-cplx}
    For \(i \neq 0,-1\), the \(i\)-th perverse cohomology 
    \[
      \percoh^{i}(\tau_*\IC_{\widetilde{Y}})\cong \IC_Z  
    \] 
    is represented by the perverse sheaf \(R^{c+1+i}g_*\mathbb{Q}[n-c-1]\).
    Suppose the integer \(c\) is even. With the notation 
    in~\eqref{eqs:anti-inv-loc-sys-quad}, we remark that the perverse 
    sheaf \(\percoh^{-1}(\tau_*\IC_{\widetilde{Y}})\cong \IC_Z\) 
    identifies to the monodromy invariant summand \(\IC^{+}_Z\) in the
    middle cohomology perverse sheaf \(R^{c}g_*\mathbb{Q}[n-c-1]\).

    
    Let \(\eta\) be an ample line bundle on \(\widetilde{Y}\). We have 
    the following commutative diagram
    \[
        \begin{tikzcd}[column sep=normal, row sep=normal]
        \percoh^{-1}(\tau_*\IC_{\widetilde{Y}}) \ar[r, "\cup \eta"] \ar[d, hook] & \percoh^{1}(\tau_*\IC_{\widetilde{Y}}) \ar[d, "\rotatebox{90}{$\sim$}"]\\
        \percoh^{0}(g_*\IC_{E}) \ar[r, "\cup (\eta|_E)"] &  \percoh^{2}(g_*\IC_{E})
        \end{tikzcd}
    \]
    By the relative Hard Lefschetz theorem~\cite[Thm.~1.6.3]{decomp-per-sheaf}, 
    the upper cup-product map is an isomorphism. By the Lefschetz hyperplane 
    theorem for the pair \((\widetilde{Y}, E)\), the left vertical 
    restriction map is a monomorphism. The right vertical map is an 
    isomorphism, which follows from the decomposition~\eqref{eqs:decomp-perv-blow-up}. 
    The cup-product map of local systems
    \[
    \cup(\eta|_E) \colon  R^{c}g_*\mathbb{Q}_{E}[n-c-1]\to R^{c+2}g_*\mathbb{Q}_{E}[n-c-1].
    \]
    is the bottom arrow in the diagram. Therefore 
    \(\percoh^{-1}(\tau_*\IC_{\widetilde{Y}})\) is isomorphic to, 
    up to a shift, the sub-local system of \(R^{c}g_*\mathbb{Q}_{E}\) 
    that maps isomorphically onto \(R^{c+2}g_*\mathbb{Q}_{E}\). 
    By the description in Example~\ref{exam:decomp-smooth-proper}, 
    such sub-local system is that generated by the monodromy 
    invariant global section \(\eta^{\frac{r}{2}}\). Thus we have
    \(\percoh^{-1}(\tau_*\IC_{\widetilde{Y}})\cong \IC_Z^{+}\).
\end{remark}

\section{Proof of main theorem}
\label{sec:proof-main}
\begin{proof}[Proof of Theorem~\ref{thm:main-limit-mhs}]
Let \(\mathbb{V}_{\infty, \mathbb{Q}} = \mathbb{H}^{n}(D; \psi_f \mathbb{Q}_{\mathfrak{X}})\) 
be the canonical fiber of the semistable degeneration 
\(\pi\colon \mathfrak{X}\to \Delta\). Recall that the monodromy weight 
spectral sequence 
    \[
    	{}_W {E^{-r, n+r}_1}=\bigoplus_{p\geq 0, -r} \mathrm{H}^{n-r-2p}(D(2p+r+1);\mathbb{Q})(-r-p)
        \Rightarrow \mathbb{V}_{\infty, \mathbb{Q}}.
    \]
in Theorem~\ref{thm:mono-wt-fil} degenerates at \(E_2\), It follows that
\[
    {}_WE^{-r, n+r}_2={}_WE^{-r, n+r}_{\infty}= \mathrm{Gr}^W_{n+r} \mathbb{V}_{\infty}.
\]
In our situation, the central fiber \(D=D_1\cup D_2\) has two components. 
It is easy to verify that \(E^{-r, n+r}_1=0\) for \(r\notin [-1, 1]\).
We present the \(E_1\)-terms involved in our calculation in the following diagram:
\begin{center}
\begin{tikzpicture}
\label{fig:weight-spec-seq}
    \draw[->, thick] (-2,0) -- (4.5,0) ;
    \draw[->, thick] (0,0) -- (0,4.5) ;

    \node (E-1n1) at (-1, 3) {\(E_1^{-1,n+1}\)};
    \node (E0n1) at (1.05, 3) {\(E_1^{0,n+1}\)};
    \draw[->, thick] (E-1n1) -- (E0n1);

    \node (E0n) at (0.85, 2) {\(E_1^{0,n}\)};
    \node (E1n) at (2.85, 2) {\(E_1^{1,n}\)};
    \node (E-1n) at (-1.1, 2) {\(E_1^{-1, n}\)};
    \draw[->, thick] (E-1n) -- (E0n);
    \draw[->, thick] (E0n) -- (E1n);

    \node (E0n1) at (1, 1) {\(E_1^{0,n-1}\)};
    \node (E1n1) at (3, 1) {\(E_1^{1,n-1}\)};
    \draw[->, thick] (E0n1) -- (E1n1);
    
    \node[below] at (-1.1,0) {\(-1\)};
    \node[below] at (0.8,0) {\(0\)};
    \node[below] at (2.6,0) {\(1\)};
\end{tikzpicture}
\end{center}
To be precise, we have
\begin{equation}
\begin{aligned}
    \mathrm{Gr}^W_{n+1} \mathbb{V}_{\infty} &= \mathrm{Ker}(\mathrm{H}^{n-1}(D_1\cap D_2)(-1) \to \mathrm{H}^{n+1}(D_1)\oplus \mathrm{H}^{n+1}(D_2));\\
	\mathrm{Gr}^W_n \mathbb{V}_{\infty}   &= \frac{\mathrm{Ker}(\mathrm{H}^n(D_1)\oplus \mathrm{H}^n(D_2)\to \mathrm{H}^n(D_1\cap D_2))}{\mathrm{Im}(\mathrm{H}^{n-2}(D_1\cap D_2)(-1)\to \mathrm{H}^n(D_1)\oplus \mathrm{H}^n(D_2))};\\
    \mathrm{Gr}^W_{n-1} \mathbb{V}_{\infty} &= \mathrm{Coker}(\mathrm{H}^{n-1}(D_1)\oplus \mathrm{H}^{n-1}(D_2)\to \mathrm{H}^{n-1}(D_1\cap D_2)).
\end{aligned}
\end{equation}

The first arrow is the Gysin map, and the third arrow is the restriction 
map. They are Poincar\'e dual to each other. By Proposition~\ref{prop:semi-red} 
and Lemma~\ref{lem:sm-quad-bdl}, the intersection \(D_1\cap D_2\) is 
a smooth quadric bundle \(g\colon E_0\to V\), and the relative dimension 
\(r = \frac{d+1}{2}\) is even. Deligne's decomposition formula~\eqref{eqs:deligne-decomp-thm} 
deduces that
\begin{equation}
\label{eqs:coh-smooth-quad}
	\mathrm{H}^{n-1}(E_0; \mathbb{Q}) \cong \bigoplus_{i=-r}^{r} \mathrm{H}^{\dim V-i}(V; R^{r+i}g_*\mathbb{Q}_{E_0})
\end{equation}
Since \(r\) is even, we have
\[
R^{r+i}g_*\mathbb{Q}_{E_0}=
\begin{cases}
    0,              & i~\textrm{odd} ;\\
    \mathbb{Q}_V,     & i ~\textrm{even}, i\neq 0;\\
    \mathbb{Q}_V^{2}, & i=0.
\end{cases}
\]
The hyperplane section \(V\) is projectively equivalent to a rational 
homogeneous space, see Subsection~\ref{subsec:severi-hyperplane}.
The odd degree Betti cohomology of \(V\) vanishes because all 
cohomology classes of a rational homogeneous space are algebraic. 
Note that \(\dim V\) is odd. Then every direct summand in the decomposition~\eqref{eqs:coh-smooth-quad}
vanishes. Hence
\[
   \mathrm{Gr}^W_{n+1} \mathbb{V}_{\infty}= \mathrm{Gr}^W_{n-1} \mathbb{V}_{\infty}=0.
\]

Now let us compute the middle graded piece \(\mathrm{Gr}^W_{n} \mathbb{V}_{\infty}\).
Recall from Proposition~\ref{prop:semi-red} that \(D_1\) is the blow-up 
\(\tau\colon \widetilde{X}_0\to X_0\) along \(V\) and \(D_2\) is the 
quadric fibration \(f\colon E\to V\). The homomorphisms
\[
    \mathrm{H}^{n-2}(E_0)(-1)\to \mathrm{H}^n(\widetilde{X}_0)\oplus \mathrm{H}^n(E)\to \mathrm{H}^n(E_0).
\]
are induced by the homomorphism of complexes
\begin{equation}
\label{eqs:Gysin-restrict-cplx}
g_*\IC_{E_0}[-1]\xrightarrow{({i_1}_*, {i_2}_*)} \tau_*\IC_{\widetilde{X}_0}\oplus f_*\IC_E
\xrightarrow{i^*_1-i^*_2} g_*\IC_{E_0}[1].
\end{equation}
where \(i_1\colon E_0\hookrightarrow \widetilde{X}_0\) and 
\(i_2 \colon E_0\hookrightarrow E\) are the natural inclusions.
We apply the decomposition formulas established for the morphisms 
\(g_*, \tau_*\) and \(f_*\) in Section~\ref{sec:quadric-fibrations}
to analyze the homomorphisms~\eqref{eqs:Gysin-restrict-cplx}.

Set \(r+1 = \mathrm{codim}(V, X_0)\). For the blow-up \(\tau\colon \widetilde{X}_0\to X_0\),
Proposition~\ref{prop:decomp-blow-up} gives
\begin{equation}
\label{eqs:decomp-odp}
	\tau_*\IC_{\widetilde{X}_0}\simeq \IC_{X_0}\oplus \bigoplus_{j=1}^{r} \IC_V[r+1-2j].
\end{equation}
For the smooth quadric bundle \(g_0\colon E_0\to V\), we write
\[
    g_*\IC_{E_0}\simeq \bigoplus_{j=0, j\neq \frac{r}{2}}^{r} \IC_{V}[r-2j]\oplus \IC_V^{+}\oplus \IC_V^{-}
\]
where \(\IC_V^{+}\) and \(\IC_V^{-}\) are the monodromy invariant and 
the complement summand of the middle cohomology local system. Consider 
the homomorphism of perverse cohomology sheaves
\begin{equation}
\label{eqs:res-per-coh}
    \percoh^i(\tau_*\mathrm{IC}_{\widetilde{X}_0})\to \percoh^{i+1}(g_*\IC_{E_0}).
\end{equation}
induced by \(i_2^*\colon \tau_*\mathrm{IC}_{\widetilde{X}_0}\to g_*\mathrm{IC}_{E_0}[1]\).
\begin{itemize}
    \item For \(i\in [-r+1,r-1]\) and \(i\neq 0, -1\), the map~\eqref{eqs:res-per-coh} 
    is the identity. It follows from Example~\ref{exam:decomp-smooth-proper} 
    and Proposition~\ref{prop:decomp-blow-up} that both 
    \(\percoh^i(\tau_*\mathrm{IC}_{\widetilde{X}_0})\) and 
    \(\percoh^{i+1}(g_*\IC_{E_0})\) are canonically isomorphic to the 
    shifted local system \(R^{r+1+i}g_*\mathbb{Q}[n-r-1]\), then the
    restriction map is the identity.
    
    \item For \(i = -1\), we have shown in Remark~\ref{rmk:stalk-inter-cplx}
    that \(\percoh^{-1}(\tau_*\mathrm{IC}_{\widetilde{X}_0})\) maps 
    isomorphically onto the sub-perverse sheaf 
    \(\IC_V^{+}\subset \percoh^{0}(g_*\mathrm{IC}_{E_0})\). 

    \item For \(i=0\), although \(\percoh^{1}(g_*\IC_{E_0})=0\), the 
    restriction map \(i^*_2\) on the direct summand \(\IC_{X_0}\) is 
    not trivial. Considering the \((-n)\)-th stalks over any \(s\in V\),
    there are isomorphisms
    \[
    \mathcal{H}^{-n}_s(\IC_{X_0})=\mathcal{H}^{-n}_s(\tau_*\mathrm{IC}_{\widetilde{X}_0})
    \xrightarrow{\sim} \mathcal{H}^{-n}_s(g_*\IC_{E_0}[1])=\mathcal{H}^{-n}_s(\IC_V[r+1]).
    \]
    In addition, by the formula~\eqref{eqs:decomp-odp}, there is the 
    isomorphism for the \((r-n)\)-stalks of any \(s\in V\)
    \[
        \mathcal{H}^{r-n}_s(\IC_{X_0}\oplus \percoh^{-1}(\tau_*\IC_{\widetilde{X}_0}))
        \cong \mathrm{H}^{r}(E_{0,s})
    \]
    We have seen that \(i^*_2\) maps \(\percoh^{-1}(\tau_*\IC_{\widetilde{X}_0}\) 
    onto \(\IC_V^{+}\). Then \(\mathcal{H}^{r-n}_s(\IC_{X_0})\) 
    corresponds to the stalk contributed by \(\IC_V^{-}\). Hence there 
    is a non-trivial map
    \[
    \IC_{X_0}\to \IC_V[r+1]\oplus \IC^{-}_V[1].
    \]
\end{itemize}

The pushforward homomorphism 
\({i_2}_*\colon g_*\IC_{E_0}[-1]\to \tau_*\IC_{\widetilde{X}_0}\)
is the Verdier dual of \(i^*_2\). Note that 
\(\mathrm{D}_V\circ \percoh^{i} = \percoh^{-i}\circ \mathrm{D}_V\).
We immediately assert
\[
   \percoh^{i-1}(g_*\IC_{E_0})\to \percoh^{i}(\tau_*\mathrm{IC}_{\widetilde{X}_0})
\]
is an isomorphism if \(i\neq 0, 1\). For \(i=1\), the map 
\[
  \IC_V^{+}\to \percoh^{1}(\tau_*\mathrm{IC}_{\widetilde{X}_0})  
\]
is an isomorphism. For \(i=0\), there is a non-trivial map 
\[
    \IC_V[-r-1]\oplus \IC_V^{-}[-1]\to \IC_{X_0}.
\]

For the quadric fibration \(E\to V\) which is of relative dimension 
\(r+1\) and has the discriminant locus \(B\). Proposition~\ref{prop:perv-decomp-quad-fib}
gives 
\[
f_*\IC_E\cong \bigoplus_{j=0}^{r+1}\mathrm{IC}_V[r+1-2j]\oplus \mathrm{IC}_B (L).
\]
In our cases, \(L\) is the constant local system \(\mathbb{Q}_B\) since
the base \(B\), as an ample divisor of the rational homogeneous space 
\(V\), has the trivial fundamental group by the Lefschetz hyperplane 
theorem. 
Consider the homomorphism of perverse cohomology sheaves
\begin{equation}
\label{eqs:res-perv-coh-quad}
    \percoh^{i}(f_*\IC_{E})\to \percoh^{i+1}(g_*\IC_{E_0}).
\end{equation}
\begin{itemize}
    \item For \(i\neq -1, 0, r+1\), the restriction map~\eqref{eqs:res-perv-coh-quad} 
    is an isomorphism. Suppose \(i\neq 0\). On the open subset 
    \(U\coloneqq V\setminus B\), the perverse sheaf 
    \(\percoh^{i}(f_*\IC_{E})|_U\) and \(\percoh^{i+1}(g_*\IC_{E})|_U\)) 
    identify to respectively \(R^{r+1+i}{f_{U*}}\mathbb{Q}[n-r-1]\) 
    and \(R^{r+1+i}{g_{U*}}\mathbb{Q}[n-r-1]\). The restriction map 
    of local systems
    \[
    R^{r+1+i}{{f_U}_*}\mathbb{Q}\to R^{r+1+i}{{g_U}_*}\mathbb{Q}
    \]
    is an isomorphism for \(i = -1, r+1\) because the stalk map
    \[
    \mathrm{H}^{r+1+i}(E_s)\to \mathrm{H}^{r+1+i}(E_{0,s})
    \] 
    is an isomorphism for \(i = -1, r+1\) for each \(s\in V\) and
    \(E_{0,s}\subset E_s\) are the smooth quadrics of dimensions 
    \(r\) and \(r+1\). Then the homomorphism~\eqref{eqs:res-perv-coh-quad} 
    remains an isomorphism for \(i\neq -1, 0, r+1\) since
    \(\percoh^{i}(f_*\IC_{E})\) and \(\percoh^{i+1}(g_*\IC_{E_0})\) 
    are the same simple perverse sheaf \(\IC_V\) in the abelian category 
    \(\mathrm{Perv}(V)\).

    \item For \(i=-1\), the map~\eqref{eqs:res-perv-coh-quad} is a 
    natural extension of the restriction of the shifted local systems
    \[
        R^r{f_U}_*\mathbb{Q}[n-r-1]\to R^r{g_U}_*\mathbb{Q}[n-r-1].
    \]
    Let \(\eta\) be a relative ample line bundle of \(f\colon E\to V\). 
    Then \(\eta|_{E_0}\) remains a relative ample line bundle on \(E_0\). 
    The rank one local system \(R^r{f_U}_*\mathbb{Q}\) is generated by 
    the global section \(\eta^{\frac{r}{2}}\), which restricts to the 
    monodromy invariant section in \(R^r{g_U}_*\mathbb{Q}\). Then the 
    unique extension leads to an isomorphism 
    \[
    \percoh^{-1}(f_*\IC_{E})\xrightarrow{\sim} \IC^{+}_V\subseteq \percoh^{0}(g_*\IC_{E}).
    \]

    \item For \(i=0\), the pushforward homomorphism 
    \({i_1}_*\colon g_*\IC_{E_0}[-1]\to f_*\IC_E\) is Verdier dual to
    \(i_1^*\colon f_*\IC_E\to g_*\IC_{E_0}[1]\). Then for 
    \(i\neq -r-1, 0, 1\), the induced homomorphism 
\[
   \percoh^{i-1}(g_*\IC_{E_0}) \to \percoh^{i}(f_*\IC_{E}) 
\]
is an isomorphism. For \(i=1\), we have the isomorphism
\[
\IC^{+}_V\to \percoh^{1}(f_*\IC_{E}).
\] 
For \(i=0\), there is a non-trivial map 
\[
\IC^{-}_V[-1]\to \IC_B(L)
\]
obtained as follows. We have said the quadric fibration \(E\to V\) is 
contained in a projective bundle \(P\) (cf. Definition~\ref{def:rel-var-coh}). 
In our cases, we may specify the projective bundle \(P\) to be 
\(p\colon \mathbb{P}(N_{V/\mathbb{P}^{n+1}_{\Delta}})\to V\) where 
\(N_{V/\mathbb{P}^{n+1}_{\Delta}}\) is the normal bundle of the 
subvariety \(V\) in the central fiber of \(\mathbb{P}^{n+1}_\Delta\)
(see \eqref{eqs:one-parameter-deg}). The exceptional divisor \(E_0\) 
is a relative in the projective bundle \(P' = \mathbb{P}(N_{V/\mathbb{P}^{n+1}})\).
Note that \(N_{V/\mathbb{P}^{n+1}}\) is a sub-bundle of 
\(N_{V/\mathbb{P}^{n+1}_{\Delta}}\). There is the natural commutative diagram
\[
   \begin{tikzcd}[column sep=normal, row sep=small]
   E_0 \ar[r, hook] \ar[d, hook] & P' \ar[d, hook]\\
   E \ar[r, hook] & P,
   \end{tikzcd}
\]
which induces the following commutative diagram of exact sequences
\[
    \begin{tikzcd}[column sep=normal, row sep=small]
    0 \ar[r] & R^{r}p'_*\mathbb{Q}_{P'} \ar[r] \ar[d] & R^{r}g_*\mathbb{Q}_{E_0} \ar[r] \ar[d] & \mathbb{Q}_V \ar[d] \ar[r] & 0\\
    0 \ar[r] & R^{r+2}p_*\mathbb{Q}_P \ar[r] & R^{r+2}f_*\mathbb{Q}_{E} \ar[r] & L \ar[r] & 0.
    \end{tikzcd}
\]
The cokernel local system \(\mathbb{Q}_V\) corresponds to the perverse 
subsheaf \(\IC_V^{-}\). The vertical maps in the above diagram are 
induced by closed embeddings, and the vertical map at the rightmost 
underlies the homomorphism \(\IC^{-}_V[-1]\to \IC_B(L)\).
By the Verdier duality, we obtain a non-trivial homomorphism
\[
\IC_B(L)\to \IC^{-}_V[1].
\]
\end{itemize}

In conclusion, for each odd integer \(i\in [-r+1, -1)\cup (1, r-1]\), we have
\[
    \percoh^{i-1}(g_*\IC_{E_0})\xrightarrow{({i_1}_*, {i_2}_*)} \percoh^i(\tau_*\mathrm{IC}_{\widetilde{X}_0})\oplus \percoh^{i}(f_*\IC_{E})\xrightarrow{i_1^*-i_2^*} \percoh^{i+1}(g_*\IC_{E_0}).
\]
For \(i=\pm 1\), replace \(\percoh^{0}(g_*\IC_{E_0})\) by \(\IC_V^{+}\) we obtain
\begin{align*}
    &\percoh^{-2}(g_*\IC_{E_0})\xrightarrow{({i_1}_*, {i_2}_*)} \percoh^{-1}(\tau_*\mathrm{IC}_{\widetilde{X}_0})\oplus \percoh^{-1}(f_*\IC_{E})\xrightarrow{i_1^*-i_2^*} \IC_V^{+};\\
    &\IC_V^{+}\xrightarrow{({i_1}_*, {i_2}_*)} \percoh^{1}(\tau_*\mathrm{IC}_{\widetilde{X}_0})\oplus \percoh^{1}(f_*\IC_{E})\xrightarrow{i_1^*-i_2^*} \percoh^{2}(g_*\IC_{E_0}).
\end{align*}
The homomorphisms \(i_1^*,{i_1}_*,i_2^*,{i_2}_*\) in the above equations 
are all isomorphisms, which implies that 
\(\mathrm{Ker}(i_1^* -i^*_2)\cong \mathrm{Im}({i_1}_*, {i_2}_*)\).
As a result, the hypercohomology of the terms
\(\percoh^i(\tau_*\mathrm{IC}_{\widetilde{X}_0})\oplus \percoh^{i}(f_*\IC_{E})\) 
for all \(i\in [-r+1, -1]\cup [1, r-1]\) do not contributes to
\(\mathrm{Gr}^W_n\mathbb{V}_{\infty}\).

For \(i=\pm (r+1)\), there exists mixed perverse degrees maps
\[
\IC_{X_0}\oplus \percoh^{-r-1}(f_*\IC_E)[r+1] \xrightarrow{i^*_1-i^*_2}
\percoh^{-r}(g_*\IC_{E_0})[r+1]
\]
and the dual map
\[
    \percoh^{r}(g_*\IC_{E_0})[-r-1]\xrightarrow{({i_1}_*,{i_2}_*)} \IC_{X_0}\oplus \percoh^{r+1}(f_*\IC_E)[-r-1].
\]
Since \(i_2^*\) and \({i_2}_*\) are isomorphisms, the term \(\IC_{X_0}\) 
lives in the subquotient. The remaining crucial morphisms are
\[
    \IC_{X_0}\oplus \IC_B\xrightarrow{} \IC_V^{-}[1], \quad
    \IC_V^{-}[-1]\xrightarrow{} \IC_{X_0}\oplus \IC_B.
\]
On hypercohomology, these morphisms are the maps
\[
\mathrm H^{d-1}(V;\mathbb Q)(-m)
 \xrightarrow{(i_*,j^*)}
\mathrm{IH}^n(X_0;\mathbb Q)\oplus \mathrm H^{d-1}(B;\mathbb Q)(-m)
\]
and
\[
\mathrm{IH}^n(X_0;\mathbb Q)\oplus \mathrm H^{d-1}(B;\mathbb Q)(-m)
 \xrightarrow{i^*-j_*}
H^{d+1}(V;\mathbb Q)(-m+1),
\]
where \(j^*\) and \(j_*\) (resp. \(i^*\) and \(i_*\)) are induced by 
the inclusion map \(B\subset V\) (resp. \(E_0\subset \widetilde X_0\)).
Therefore
\begin{equation}
\label{eqs:key-sub-quotient}
\operatorname{Gr}^W_n \mathbb{V}_{\infty}
\cong
\frac{\ker(i^*-j_*)}
     {\operatorname{im}(i_*, j^*)}.
\end{equation}
The composition 
\[
j_*j^*:\mathrm H^{d-1}(V;\mathbb Q)(-m)\to
H^{d+1}(V;\mathbb Q)(-m+1)
\] 
is cup product with the ample class \([B]\), and is an isomorphism by 
the Hard Lefschetz theorem.  The corresponding composition 
\(i^*\circ i_*\) is the same map, because the two arrows in the
Mayer--Vietoris complex compose to zero. Consequently we obtain the 
following orthogonal decomposition
\[
\mathrm H^{d-1}(B;\mathbb Q)=\mathrm H^{d-1}(B;\mathbb Q)_{\mathrm{van}}
\oplus j^*\mathrm H^{d-1}(V;\mathbb Q),
\]
and
\[
\mathrm{IH}^n(X_0;\mathbb Q)=\mathrm{IH}^n(X_0;\mathbb Q)_v\oplus i_*\mathrm H^{d-1}(V;\mathbb Q)(-m),
\]
where \(\mathrm{IH}^n(X_0;\mathbb Q)_v=\ker i^*\). Substituting these 
decompositions into~\eqref{eqs:key-sub-quotient} gives
\[
\operatorname{Gr}^W_n \mathbb{V}_{\infty}
\cong
\mathrm H^{d-1}(B;\mathbb Q)_{\mathrm{van}}(-m)
\oplus \mathrm{IH}^n(X_0;\mathbb Q)_v.
\]

The polarized Hodge structure on \(\mathrm{H}^{d-1}(B; \mathbb Q)_{\mathrm{van}}\)
inherits from the primitive cohomology 
\(\mathrm{H}^{d-1}(B; \mathbb Q)_{\mathrm{prim}}\). 
The intersection cohomology group \(\mathrm{IH}^n(X_0)\) carries a 
pure Hodge structure of weight \(n\) such that the natural 
inclusion (the identity in our cases)
\[
  \mathrm{IH}^n(X_0)\to \mathrm{H}^n_0(\widetilde{X}_0)
\]
is a morphism of weight \(n\) pure Hodge structures.
Recall that \(\widetilde{X}_0\) is a projective bundle over the Severi
variety \(S\). Hence the Hodge structure on \(\mathrm{H}^n(\widetilde{X}_0)\) 
is a direct sum of the Tate twist \(\mathbb{Q}(-\frac{n}{2})\). As a result,
the Hodge structure on \(\mathrm{IH}^n(X_0)\), as well as \(\mathrm{IH}^n(X_0)_v\),
is a direct sum of the Tate twist \(\mathbb{Q}(-\frac{n}{2})\).
\end{proof}

\subsection*{Acknowledgment.} We would like to thank Yilong Zhang for 
informing us about Collino's work and sharing his preprint.
We also thank Zhiwei Zheng for helpful discussions.

\end{document}